\definecolor{my-linkcolor}{rgb}{0.75,0,0}
\definecolor{my-citecolor}{rgb}{0.1,0.57,0}
\definecolor{my-urlcolor}{rgb}{0,0,0.75}
\title{On the tensor structure of modules for compact orbifold vertex operator algebras}
\author{Robert McRae\\
\small \it 
Department of Mathematics, Vanderbilt University\\
\small \it 1326 Stevenson Center, Nashville, TN 37240\\
\small \textit{E--mail address:} \texttt{robert.h.mcrae@vanderbilt.edu}} \date{}
\numberwithin{equation}{section}
\theoremstyle{definition}\newtheorem{rema}{Remark}[section]
\theoremstyle{plain}\newtheorem{propo}[rema]{Proposition}
\newtheorem{theo}[rema]{Theorem}
\theoremstyle{definition}\newtheorem{defi}[rema]{Definition}
\theoremstyle{plain}\newtheorem{lemma}[rema]{Lemma}
\newtheorem{corol}[rema]{Corollary}
\theoremstyle{definition}\newtheorem{exam}[rema]{Example}
\theoremstyle{definition}\newtheorem{assum}[rema]{Assumption}
\newcommand{\cY}{\mathcal{Y}}
\newcommand{\cA}{\mathcal{A}}
\newcommand{\cR}{\mathcal{R}}
\newcommand{\cC}{\mathcal{C}}
\newcommand{\CC}{\mathbb{C}}
\newcommand{\ZZ}{\mathbb{Z}}
\newcommand{\NN}{\mathbb{N}}
\newcommand{\RR}{\mathbb{R}}
\newcommand{\tens}{\boxtimes}
\newcommand{\vac}{\mathbf{1}}
 \DeclareMathOperator{\rep}{Rep}
 \DeclareMathOperator{\Endo}{End}
\newcommand{\even}{{\bar{0}}}
\newcommand{\odd}{{\bar{1}}}
\begin{document}

\bibliographystyle{alpha}

\maketitle

\abstract{
\noindent Suppose $V^G$ is the fixed-point vertex operator subalgebra of a compact group $G$ acting on a simple abelian intertwining algebra $V$. We show that if all irreducible $V^G$-modules contained in $V$ live in some braided tensor category of $V^G$-modules, then they generate a tensor subcategory equivalent to the category $\rep G$ of finite-dimensional representations of $G$, with associativity and braiding isomorphisms modified by the abelian $3$-cocycle defining the abelian intertwining algebra structure on $V$. Additionally, we show that if the fusion rules for the irreducible $V^G$-modules contained in $V$ agree with the dimensions of spaces of intertwiners among $G$-modules, then the irreducibles contained in $V$ already generate a braided tensor category of $V^G$-modules. These results do not require rigidity on any tensor category of $V^G$-modules and thus apply to many examples where braided tensor category structure is known to exist but rigidity is not known; for example they apply when $V^G$ is $C_2$-cofinite but not necessarily rational. When $V^G$ is both $C_2$-cofinite and rational and $V$ is a vertex operator algebra, we use the equivalence between $\mathrm{Rep}\,G$ and the corresponding subcategory of $V^G$-modules to show that $V$ is also rational. As another application, we show that a certain category of modules for the Virasoro algebra at central charge $1$ admits a braided tensor category structure equivalent to $\rep SU(2)$, up to modification by an abelian $3$-cocycle.

\bigskip

\noindent\textbf{MSC 2010:} Primary: 17B69; Secondary: 18D10, 20C35, 81R10

\bigskip

\noindent\textbf{Keywords:} Vertex operator algebras, compact Lie groups, braided tensor categories, Virasoro algebra
}

\tableofcontents

\section{Introduction}

In \cite{DLM}, Dong, Li, and Mason proved the following Schur-Weyl-duality-type result for a simple vertex operator algebra $V$: if $G$ is a compact Lie group of automorphisms acting continuously on $V$, then $V$ is semisimple as a module for $G\times V^G$, where $V^G$ is the vertex operator subalgebra of $G$-fixed points. Specifically, $V=\bigoplus_{\chi\in\widehat{G}} M_\chi\otimes V_\chi$, where the sum runs over all finite-dimensional irreducible characters of $G$, $M_\chi$ is the finite-dimensional irreducible (continuous) $G$-module corresponding to $\chi$, and the $V_\chi$ are non-zero, distinct, irreducible $V^G$-modules. This decomposition thus sets up a correspondence between the category $\rep G$ of finite-dimensional continuous $G$-modules and the semisimple subcategory $\cC_V$ of $V^G$-modules generated by the $V_\chi$. It is then natural to ask whether this correspondence preserves additional structure on the categories under consideration, in particular tensor category structure. 

In \cite{KirillovOrbifoldI}, Kirillov constructed a tensor equivalence $\Phi: \rep G\rightarrow \cC_V$ in a general categorical setting, but under rather strong conditions: $G$ is finite, and the modules $V_\chi$ are contained in a rigid semisimple tensor category of $V^G$-modules. While the category $\cC_V$ is certainly semisimple, it is unfortunately difficult to establish rigidity for a tensor category of vertex operator algebra modules. Up to this point, the only general theorem establishing rigidity for vertex operator algebra module categories by Huang \cite{H-rigidity} applies to so-called ``strongly rational'' vertex operator algebras whose full module categories satisfy strong semisimplicity assumptions. On the other hand, many vertex operator algebras, including $C_2$-cofinite but non-rational ones \cite{H-cofin}, admit non-semisimple tensor categories of modules, and it is not known in general when these tensor categories are rigid. Thus we would like to show that the functor $\Phi$ from \cite{KirillovOrbifoldI} is a tensor equivalence \textit{without} using rigidity on any category of $V^G$-modules.

For the case of $G$ abelian, \cite{Miy}, \cite{CarM}, and \cite{CKLR} established rigidity for modules in $\cC_V$, thus effectively showing that $\Phi$ is an equivalence without the \textit{a priori} assumption of rigidity. Here, we prove that $\Phi$ is an equivalence of symmetric tensor categories for general (non-abelian) compact $G$ under essentially maximally general conditions: we only require $V^G$ to actually \textit{have} a braided tensor category of modules that includes the $V_\chi$. Actually, we prove a slightly more general result: the simple vertex operator algebra $V$ can be replaced by a simple abelian intertwining algebra. Abelian intertwining algebras, introduced in \cite{DL}, are a kind of generalized vertex operator algebra graded by an abelian group $A$, where the usual associativity and commutativity properties of the vertex operator are modified by an abelian $3$-cocycle on $A$ with values in $\CC^\times$. We verify that the Schur-Weyl duality result of \cite{DLM} still holds in the abelian intertwining algebra setting, and then show that if $V^G$ has a tensor category of modules including the $V_\chi$, then $\cC_V$ is braided equivalent to a modification of $\rep G$ involving the $3$-cocycle on $A$.

As the proof of our main result does not use rigidity for any category of $V^G$-modules, $\cC_V$ inherits rigidity from $\rep G$ as a corollary. Specifically, the tensor functor $\Phi$ sends duals of $G$-modules to duals of $V^G$-modules. Moreover, since tensor functors are compatible with all tensor category structures, in particular with unit and associativity isomorphisms, the rigidity axioms for duals of $V^G$-modules hold because they do for duals of $G$-modules. We expect this corollary to have applications in the study of larger categories of $V^G$-modules as well as twisted $V$-modules. In this paper, we provide one application: when $V$ is a simple CFT-type vertex operator algebra and $G$ is finite, we show that if $V^G$ is strongly rational, then so is $V$. This result has previously appeared in \cite{ADJR}, based on \cite[Theorem 3.5]{HKL}, but under the strong additional assumption that all irreducible $V^G$-modules other than $V^G$ itself have strictly positive conformal weight gradings.

It turns out that the assumption of our main theorem, that $V^G$ admits a suitable braided tensor category of modules, is rather deep. The construction of braided tensor category structure on modules for a vertex operator algebra is based on the (logarithmic) vertex tensor category theory of Huang-Lepowsky-(Zhang) (\cite{HL-VTC}-\cite{HL-tensor3}, \cite{H-tensor4}, \cite{HLZ1}-\cite{HLZ8}; see also the review article \cite{HL-rev}). The construction of associativity isomorphisms is particularly difficult, related to convergence and expansion properties of conformal-field-theoretic $4$-point functions. Nevertheless, many vertex algebraic tensor categories have been constructed, including for example the full categories of (grading-restricted, generalized/logarithmic) modules for $C_2$-cofinite vertex operator algebras (\cite{H-cofin}). So the results of this paper apply when $V^G$ is $C_2$-cofinite; this is known when $V$ is $C_2$-cofinite and $G$ is finite solvable (\cite{Miy2}), and one conjecures that the same should hold for any finite group $G$. 

In this paper, we also prove a result on existence of braided tensor category structure on $\mathcal{C}_V$: if the spaces of intertwining operators among the $V^G$-modules $V_\chi$ are isomorphic to the spaces of intertwiners among the $G$-modules $M_\chi$, then $\cC_V$ admits braided tensor category structure. In this situation, all intertwining operators among the $V^G$-modules $V_\chi$ are derived from the vertex operator for the larger algebra $V$, which has the associativity properties needed to obtain associativity isomorphisms in $\cC_V$. This result applies for example to the central charge $c=1$ Virasoro vertex operator algebra $L(1,0)$, which is the $SU(2)$-fixed point subalgebra of the $\mathfrak{sl}_2$-weight lattice abelian intertwining algebra; see Section \ref{sec:exams} for more details.

The main results of this paper will have further applications to appear in upcoming work. In \cite{McR}, we study any (not necessarily semisimple or rigid) braided tensor category $\cC$ of $V^G$-modules containing $\cC_V$. When $G$ is finite, $V$ is an algebra object in $\cC$ (\cite{HKL}), and we use the rigidity of $\cC_V$ to show that every module in $\cC$ having an associative, unital action of $V$ is a direct sum of $g$-twisted $V$-modules for $g\in G$. This result has two applications:
\begin{itemize}
 \item First, perhaps the most prominent question regarding group actions on vertex operator algebras is the converse of the first application discussed above: whether $V^G$ is strongly rational when $V$ is strongly rational and $G$ is finite. This was recently proved for $G$ finite cyclic (and by extension, $G$ finite solvable) in \cite{CarM}. In \cite{McR}, we prove that if $V$ is strongly rational and $\cC$ is a category of $V^G$-modules with braided tensor category structure, then $\mathcal{C}$ is semisimple. In particular, if we know $V^G$ is also $C_2$-cofinite, we may take $\cC$ to be the full $V^G$-module category, and it follows that $V^G$ is rational. The main idea here is that semisimplicity of $\cC$ follows from semisimplicity of the category of $V^G$-modules in $\cC$ admitting an associative, unital $V$-action, and semisimplicity of this category follows because any indecomposable object is a $g$-twisted $V$-module for some individual $g\in G$ and because each $V^{\langle g\rangle}$ is rational by \cite{CarM}.
 
 \item Secondly, we show in \cite{McR} that any braided tensor category $\cC$ of $V^G$-modules, with $G$ finite, is braided equivalent to the $G$-equivariantization of the $G$-crossed category of twisted $V$-modules in $\cC$. This result partially resolves a conjecture of Runkel that the braided tensor category constructed in \cite{Ru} is equivalent to the (non-semisimple) braided tensor category of modules for the even vertex operator subalgebra of the symplectic fermion vertex operator superalgebra $SF(d)$. In fact, the category in \cite{Ru} seems to be the $\ZZ/2\ZZ$-equivariantization of the category of untwisted and parity-twisted $SF(d)$-modules; thus full resolution of Runkel's conjecture amounts to checking this carefully.
\end{itemize}

The methods used in this paper are a synthesis of vertex algebraic and tensor categorical techniques. The main difficulty in showing that the functor $\Phi: \rep G\rightarrow \cC_V$ is a braided tensor equivalence is showing that a natural transformation
\begin{equation*}
 J_{M_1,M_2}: \Phi(M_1)\tens\Phi(M_2)\rightarrow\Phi(M_1\otimes M_2),
\end{equation*}
where $M_1$, $M_2$ are objects of $\rep G$, constructed by Kirillov in the case of $G$ finite, is actually an isomorphism. In \cite{KirillovOrbifoldI}, the proof of both the injectivity and surjectivity of $J_{M_1,M_2}$ heavily used the rigidity of objects in $\cC_V$. In our vertex algebraic context, we show that surjectivity is a fairly easy consequence of a lemma of Dong and Mason from \cite{DM1}. The proof of injectivity is much more involved and can perhaps be viewed as a translation of the argument in \cite{Miy}, \cite{CarM}, and \cite{CKLR} for $G$ abelian into tensor-categorical language, but considerably generalized to the non-abelian setting where, for example, evaluations and coevaluations in $\rep\,G$ are not isomorphisms. Our proof of injectivity illustrates the value of using tensor-categorical methods to study vertex operator algebra module categories: proving the result directly using intertwining operators would require careful use of complex analysis to deal with compositions of up to three intertwining operators. Fortunately, the necessary complex analysis has been done in \cite{HLZ5}-\cite{HLZ8}, so we can freely use the resulting associativity isomorphisms and pentagon identity, among other tensor-categorical structures.

We now overview the remaining contents of this paper. In Section \ref{subsec:vrtxtenscat}, we review terminology and structures from the vertex tensor category theory of Huang-Lepowsky-(Zhang) that will be used later in the work; we also prove that a vertex tensor functor (in the sense of \cite[Section 3.6]{CKM}) between vertex tensor categories induces a braided tensor functor between corresponding braided tensor categories. In Section \ref{sec:gpmodcats}, we explain how to modify the ribbon category of finite-dimensional representations of a compact group $G$ by an abelian $3$-cocycle on a discrete abelian group $A$, where $G$ contains the compact dual group $\widehat{A}$. In Section \ref{sec:Abintwalg}, we recall from \cite{DL} the notion of abelian intertwining algebra associated to an abelian $3$-cocycle on an abelian group $A$, as well as some basic properties of abelian intertwining algebras and their automorphisms.

In Section \ref{sec:SW-duality}, we show that the Schur-Weyl duality result of \cite{DLM} still holds for a simple abelian intertwining algebra $V$ with compact Lie group $G$ of automorphisms, and we use this result in Section \ref{sec:Phi} to show that the functor $\Phi$ from $\mathrm{Rep}\,G$ to $V^G$-modules constructed in \cite{KirillovOrbifoldI} is fully faithful and thus an equivalence of categories onto its image. We also show that $\Phi$ induces an injection from $G$-module intertwiners to $V^G$-module intertwining operators.

We prove the main theorems of this paper in Section \ref{sec:maintheorems}. In Section \ref{subsec:firstmaintheo}, we show that if the injection on intertwiner spaces from Section \ref{sec:Phi} is also surjective, then the semisimple category $\cC_V$ of $V^G$-modules whose simple objects occur in $V$ admits vertex tensor category structure in the sense of Huang-Lepowsky-(Zhang) and therefore also braided tensor category structure. On the other hand, we show in Section \ref{subsec:secondmaintheo} that if $\cC_V$ or some larger category of $V^G$-modules admits vertex and braided tensor category structure, then $\mathcal{C}_V$ is braided tensor equivalent to an abelian $3$-cocycle modification of $\mathrm{Rep}\,G$, so that in particular the induced map on intertwiners is an isomorphism. We discuss examples and applications in Section \ref{sec:exams}. Especially, we construct a new braided tensor category structure on a semisimple category of modules for the simple Virasoro vertex operator algebra $L(1,0)$, and we show that if $V$ is a simple vertex operator algebra and $V^G$ is strongly rational, then $V$ is also strongly rational, with no assumption on the gradings of $V^G$-modules.

\paragraph{Acknowledgements}
I would like to thank Thomas Creutzig, Shashank Kanade, Florencia Orosz Hunziker, and Jinwei Yang for comments and discussions, as well as the referee for suggestions and corrections. Theorem \ref{VregifVGis} and its proof were inspired by the question https://mathoverflow.net/questions/321416/
asked by Bin Gui on MathOverflow. This work is supported in part by National Science Foundation grant DMS-1362138.

\section{Preliminaries}

\subsection{Vertex tensor categories}\label{subsec:vrtxtenscat}

The notion of \textit{vertex tensor category} was introduced by Huang and Lepowsky in \cite{HL-VTC} to describe the tensor structures exhibited by suitable module categories for a vertex operator algebra, originally constructed in \cite{HL-tensor1}-\cite{HL-tensor3}, \cite{H-tensor4}. A key feature of vertex tensor categories is the existence of a tensor product for each punctured sphere with two ingoing and one outgoing punctures, equipped with local coordinates at each puncture. Such tensor products are motivated by the connection, made precise by Huang in \cite{H-book}, between vertex operator algebras and the geometric picture of conformal field theory introduced by Vafa in \cite{V} (which in turn is a variant of the similar picture developed by Segal in \cite{S}).

In the connection between vertex operator algebras and Vafa conformal field theory, the vertex operator $Y(\cdot, z)\cdot$ for a vertex operator algebra, with formal variable $x$ replaced by $z\in\CC^\times$, corresponds to the sphere $P(z)$ with ingoing punctures at $0$ and $z$ and one outgoing puncture at $\infty$, with standard local coordinates at the punctures. Specifically, the local coordinates for $P(z)$ are $w\mapsto w$, $w\mapsto w-z$, and $w\mapsto 1/w$ around $0$, $z$, and $\infty$, respectively. It turns out (see \cite{H-book}) that vertex operators corresponding to all other choices of local coordinates at the punctures can be obtained from $Y(\cdot, z)\cdot$ using the Virasoro action on the vertex operator algebra. For this reason, we will somewhat abuse terminology and use the term ``vertex tensor category structure'' to refer simply to the tensor structures pertaining to the sphere $P(z)$, which we now describe following \cite{HLZ1}-\cite{HLZ8}.

Let $\mathcal{C}$ be a category of grading-restricted generalized modules (in the sense of \cite[Definitions 2.12 and 2.25]{HLZ1}) for a vertex operator algebra $V$. In particular, a module $W$ in $\cC$ has a conformal weight grading $W=\bigoplus_{h\in\CC} W_{[h]}$ where for any $h\in\CC$, $W_{[h]}$ is the (finite-dimensional) generalized $h$-eigenspace for the Virasoro operator $L(0)$ on $W$. The \textit{graded dual} $W'=\bigoplus_{h\in\CC^\times} W_{[h]}^*$ is also a grading-restricted generalized $V$-module. We use $\overline{W}$ to denote the \textit{algebraic completion} of $W$, which is the direct product (as opposed to direct sum) of the conformal weight spaces $W_{[h]}$. A useful characterization of the algebraic completion is that it is the full vector space dual of $W'$: $\overline{W}=(W')^*$.

For $h\in\CC$, we use $\pi_h$ to denote the projection from $\overline{W}$ to $W_{[h]}$. We now recall the notion of $P(z)$-intertwining map from \cite{HLZ3}:
\begin{defi}
 Let $W_1$, $W_2$, and $W_3$ be $V$-modules in $\cC$. A \textit{$P(z)$-intertwining map} of type $\binom{W_3}{W_1\,W_2}$ is a linear map
 \begin{equation*}
  I: W_1\otimes W_2\rightarrow\overline{W_3}
 \end{equation*}
satisfying the following conditions:
\begin{enumerate}
 \item \textit{Lower truncation}: For any $h\in\CC$, $w_1\in W_1$, and $w_2\in W_2$, $\pi_{h-n}I(w_1\otimes w_2) = 0$ for $n\in\NN$ sufficiently large.
 
 \item The \textit{Jacobi identity}:
 \begin{align*}
  x_0^{-1}\delta\left(\frac{x_1-z}{x_0}\right) Y_{W_3}(v, x_1) I(w_1\otimes w_2) & - x_0^{-1}\delta\left(\frac{-z+x_1}{x_0}\right) I(w_1\otimes Y_{W_2}(v, x_1)w_2)\nonumber\\
  &= z^{-1}\delta\left(\frac{x_1-x_0}{z}\right) I(Y_{W_1}(v,x_0)w_1\otimes w_2)
 \end{align*}
for all $v\in V$, $w_1\in W_1$, and $w_2\in W_2$.
\end{enumerate}

\end{defi}

From the definition, it is easy to see that a $P(z)$-intertwining map can be obtained from a logarithmic intertwining operator
\begin{equation*}
 \cY(\cdot, x)\cdot: W_1\otimes W_2\rightarrow W_3[\log x]\lbrace x\rbrace
\end{equation*}
by specializing the formal variable $x$ to $z\in\CC^\times$ using a choice of branch of logarithm. For simplicity, we will always use the branch 
\begin{equation*}
 \log z = \log \vert z\vert +i\arg z
\end{equation*}
for which $0\leq\arg z<2\pi$. Thus given an intertwining operator $\mathcal{Y}$, we will use $I_\mathcal{Y}$ to denote the $P(z)$-intertwining map
\begin{equation*}
 I_\mathcal{Y} =\cY(\cdot, e^{\log z})\cdot.
\end{equation*}
Conversely, \cite[Proposition 4.8]{HLZ3} shows that any intertwining operator of type $\binom{W_3}{W_1\,W_2}$ comes from a $P(z)$-intertwining map; specifically, the inverse to the map $\cY\mapsto I_{\mathcal{Y}}$ is the map $I\mapsto\cY_I$ given by
\begin{equation}\label{YfromI}
 \cY_I(w_1, x)w_2=\left(\frac{e^{\log z}}{x}\right)^{-L(0)} I\left(\left(\frac{e^{\log z}}{x}\right)^{L(0)} w_1\otimes \left(\frac{e^{\log z}}{x}\right)^{L(0)} w_2\right)
\end{equation}
for $w_1\in W_1$, $w_2\in W_2$.

Now we recall the notion of $P(z)$-tensor product in $\mathcal{C}$ from \cite{HLZ3}:
\begin{defi}
 Suppose $W_1$ and $W_2$ are $V$-modules in $\mathcal{C}$. A \textit{$P(z)$-tensor product} of $W_1$ and $W_2$ in $\cC$ is a pair $(W_1\tens_{P(z)} W_2, \boxtimes_{P(z)})$, where $W_1\tens_{P(z)} W_2$ is an object of $\cC$ and $\boxtimes_{P(z)}$ is a $P(z)$-intertwining map of type $\binom{W_1\tens_{P(z)} W_2}{W_1\,W_2}$, such that the following universal property holds:
 
 For any $V$-module $W_3$ in $\cC$ and $P(z)$-intertwining map $I$ of type $\binom{W_3}{W_1\,W_2}$, there is a unique morphism $f_I: W_1\tens_{P(z)} W_2\rightarrow W_3$ in $\cC$ such that $$I=\overline{f_I}\circ\tens_{P(z)},$$
 where $\overline{f_I}$ is the canonical extension of $f_I$ to $\overline{W_1\tens_{P(z)} W_2}$.
\end{defi}

\begin{rema}\label{tensprodsindiffC}
 The identity of the $P(z)$-tensor product of $W_1$ and $W_2$ may possibly depend on the category $\cC$ under consideration, given that it is defined in terms of intertwining maps only from the category $\cC$.
\end{rema}

\begin{rema}
 We use the notation $w_1\boxtimes_{P(z)} w_2=\boxtimes_{P(z)}(w_1\otimes w_2)$ for $w_1\in W_1$ and $w_2\in W_2$, in analogy with the notation for a tensor product bilinear map in the category of vector spaces. By \cite[Proposition 4.23]{HLZ3}, the $P(z)$-tensor product module is spanned by vectors $\pi_h(w_1\tens_{P(z)} w_2)$ for $h\in\CC$, $w_1\in W_1$, and $w_2\in W_2$.
\end{rema}

\begin{rema}
 Equation \eqref{YfromI} shows how to obtain a tensor product intertwining operator $\cY_{\boxtimes_{P(z)}}$ from the tensor product $P(z)$-intertwining map. We will frequently need to evaluate such tensor product intertwining operators at different values of $z$, so here we record the following consequence of \eqref{YfromI}:
 \begin{align*}
  \cY_{\boxtimes_{P(z_2)}}(w_1, e^{\ell(z_1)})w_2 & = e^{(\ell(z_1)-\log z_2)L(0)} \boxtimes_{P(z_2)}\left(e^{-(\ell(z_1)-\log z_2)L(0)}w_1\otimes e^{-(\ell(z_1)-\log z_2)L(0)}w_2\right)\nonumber\\
  & = e^{(\ell(z_1)-\log z_2)L(0)}\left(e^{-(\ell(z_1)-\log z_2)L(0)}w_1\tens_{P(z_2)} e^{-(\ell(z_1)-\log z_2)L(0)}w_2\right),
 \end{align*}
where $z_1,z_2\in\CC^\times$ and $\ell(z_1)$ is any branch of logarithm of $z_1$ (in fact, sometimes we have $z_1=z_2$ and $\ell$ a possibly different branch from $\log$). Note that here, $\cY_{\boxtimes_{P(z_2)}}(\cdot, e^{\ell(z_1)})\cdot$ is a $P(z_1)$-intertwining map of type $\binom{W_1\tens_{P(z_2)} W_2}{W_1\,W_2}$.
\end{rema}

Assuming that $P(z)$-tensor products exist in the category $\cC$, they define functors $\tens_{P(z)}: \cC\times\cC\rightarrow\cC$. The $P(z)$-tensor product of morphisms $f_1: W_1\rightarrow X_1$ and $f_2: W_2\rightarrow X_2$ is defined to be the unique homomorphism (guaranteed by the universal property of the $P(z)$-tensor product) such that the diagram
\begin{equation*}
 \xymatrixcolsep{4pc}
 \xymatrix{
 W_1\otimes W_2 \ar[d]^{\tens_{P(z)}} \ar[r]^{f_1\otimes f_2} & X_1\otimes X_2 \ar[d]^{\tens_{P(z)}} \\
 \overline{W_1\tens_{P(z)} W_2} \ar[r]^{\overline{f_1\tens_{P(z)} f_2}} & \overline{X_1\tens_{P(z)} X_2} \\
 }
\end{equation*}
commutes. By \cite[Proposition 4.23]{HLZ3}, $f_1\tens_{P(z)} f_2$ is thus completely characterized by the relation
\begin{equation*}
 \overline{f_1\tens_{P(z)} f_2}(w_1\tens_{P(z)} w_2)=f_1(w_1)\tens_{P(z)} f_2(w_2)
\end{equation*}
for $w_1\in W_1$, $w_2\in W_2$.

Now we will say that $\cC$ admits vertex tensor category structure if $P(z)$-tensor products exist in $\cC$ and we have the following natural isomorphisms, described mostly in \cite{HLZ8} (see also the exposition in \cite[Section 3.3]{CKM}):
\begin{itemize}
 \item \textit{Parallel transport isomorphisms}: For any continuous path $\gamma$ in $\CC^\times$ beginning at $z_1$ and ending at $z_2$, there is a natural isomorphism $T_\gamma: \tens_{P(z_1)}\rightarrow\tens_{P(z_2)}$ characterized by
 \begin{equation*}
  \overline{T_{\gamma; W_1, W_2}}(w_1\tens_{P(z_1)} w_2) =\cY_{\boxtimes_{P(z_2)}}(w_1, e^{\ell(z_1)})w_2
 \end{equation*}
for all $w_1\in W_1$, $w_2\in W_2$, where $\ell(z_1)$ is the branch of logarithm determined by $\log z_2$ and the path $\gamma$. If $\gamma$ is a path contained in $\CC^\times$ with a branch cut along the positive real axis (that is, $\gamma$ never crosses the positive real axis or approaches it from the lower half plane), then $\ell(z_1)$ is simply $\log z_1$, and we use $T_{z_1\to z_2}$ to denote the corresponding parallel transport isomorphism.

\item \textit{$P(z)$-unit isomorphisms}: For any $z\in\CC^\times$, there are natural isomorphisms $l_{P(z); W}: V\tens_{P(z)} W\rightarrow W$ and $r_{P(z); W}: W\tens_{P(z)} V\rightarrow W$ for $W$ in $\cC$ characterized by
\begin{equation*}
 \overline{l_{P(z); W}}(v\tens_{P(z)} w) = Y_W(v, z)w
\end{equation*}
and
\begin{equation*}
 \overline{r_{P(z); W}}(w\tens_{P(z)} v)=e^{z L(-1)} Y_W(v,-z)w
\end{equation*}
for $v\in V$, $w\in W$.

\item \textit{$P(z_1,z_2)$-associativity isomorphisms}: For $z_1, z_2\in\CC^\times$ such that $\vert z_1\vert>\vert z_2\vert>\vert z_1-z_2\vert>0$, and for $W_1$, $W_2$, $W_3$ in $\cC$, there is a natural isomorphism
\begin{equation*}
 \cA_{P(z_1,z_2); W_1,W_2,W_3}: W_1\tens_{P(z_1)}(W_2\tens_{P(z_2)} W_3)\rightarrow (W_1\tens_{P(z_1-z_2)} W_2)\tens_{P(z_2)} W_3
\end{equation*}
characterized by
\begin{equation*}
 \overline{\cA_{P(z_1,z_2); W_1, W_2, W_3}}(w_1\tens_{P(z_1)}(w_2\tens_{P(z_2)} w_3)=(w_1\tens_{P(z_1-z_2)} w_2)\tens_{P(z_2)} w_3
\end{equation*}
for $w_1\in W_1$, $w_2\in W_2$, $w_3\in W_3$. Here, the meaning of triple tensor products of elements of $V$-modules is as follows. We identify $w_1\tens_{P(z_1)}(w_2\tens_{P(z_2)} w_3)\in\overline{W_1\tens_{P(z_1)}(W_2\tens_{P(z_2)} W_3)}$ by its action as an element in the dual space of $(W_1\tens_{P(z_1)}(W_2\tens_{P(z_2)} W_3))'$:
\begin{equation*}
 \langle w', w_1\tens_{P(z_1)}(w_2\tens_{P(z_2)} w_3)\rangle =\sum_{h\in\CC} \langle w', w_1\tens_{P(z_1)}\pi_h(w_2\tens_{P(z_2)} w_3)\rangle
\end{equation*}
for any $w'\in (W_1\tens_{P(z_1)}(W_2\tens_{P(z_2)} W_3))'$, and $(w_1\tens_{P(z_1-z_2)} w_2)\tens_{P(z_2)} w_3$ is interpreted similarly. Clearly, these triple tensor products of elements only make sense if the corresponding sums converge absolutely; such analytic issues are the main reason why constructing associativity isomorphisms in a category of $V$-modules is generally difficult.

\item \textit{$P(z)$-braiding isomorphisms}: For any $z\in\CC^\times$ and $W_1$, $W_2$ objects of $\cC$, there is a natural isomorphism
\begin{equation*}
 \cR_{P(z); W_1,W_2}: W_1\tens_{P(z)} W_2\rightarrow W_2\tens_{P(-z)} W_1
\end{equation*}
characterized by
\begin{equation*}
 \overline{\cR_{P(z); W_1, W_2}}(w_1\tens_{P(z)} w_2)=e^{z L(-1)}\cY_{\boxtimes_{P(-z)}}(w_2, e^{\log z+\pi i})w_1
\end{equation*}
for $w_1\in W_1$, $w_2\in W_2$. Note here that $\log z+\pi i$ may not equal the branch $\log(-z)$. The inverse $P(z)$-braiding isomorphism is defined the same way, except one uses the branch $\log z-\pi i$ of logarithm for $-z$.
\end{itemize}
Additionally, these natural isomorphisms should satisfy analogues of the triangle, pentagon, and hexagon coherence conditions which may be found stated in the proof of \cite[Theorem 12.15]{HLZ8}.

In \cite{HLZ8}, it is shown how to obtain a braided tensor category from the vertex tensor category structure on $\cC$. One chooses the tensor product bifunctor $\tens$ to be $\tens_{P(1)}$ (actually, any $z\in\CC^\times$ would work). Then the unit, associativity, and braiding isomorphisms are as follows:
\begin{itemize}
 \item The unit object is $V$ and the unit isomorphisms $l$ and $r$ are $l_{P(1)}$ and $r_{P(1)}$, respectively.
 
 \item The natural associativity isomorphism $\cA$ is given by
 \begin{align*}
  \cA_{W_1,W_2,W_3} & =T_{r_2\to 1; W_1\tens W_2, W_3}\circ(T_{r_1-r_2\to 1; W_1,W_2}\tens_{P(r_2)} 1_{W_3})\circ\nonumber\\
  &\hspace{3em}\circ\cA_{P(r_1,r_2), W_1,W_2, W_3}\circ(1_{W_1}\tens_{P(r_1)} T_{1\to r_2; W_2,W_3})\circ T_{1\to r_1; W_1, W_2\tens W_3}
 \end{align*}
where $r_1, r_2$ are positive real numbers chosen so that $r_1>r_2>r_1-r_2>0$. The associativity isomorphisms do not depend on the choice of such $r_1$, $r_2$ (see for instance \cite[Proposition 3.32]{CKM}).

\item The natural braiding isomorphism is given by $\cR=T_{-1\to 1}\circ\cR_{P(1)}$.
\end{itemize}
The triangle, pentagon, and hexagon identities for this braided tensor category structure on $\cC$ follow from the corresponding coherence conditions on the vertex tensor category, as shown in \cite[Theorem 12.15]{HLZ8}.

In \cite[Section 3.6]{CKM}, the notion of \textit{vertex tensor functor} was introduced to describe functors which preserve vertex tensor category structure. In particular, let $\cC_1$ and $\cC_2$ be vertex tensor categories with $P(z)$-tensor product functors, parallel transport isomorphisms, $P(z)$-unit and braiding isomorphisms, and $P(z_1,z_2)$-associativity isomorphisms satisfying the appropriate coherence conditions; we do not necessarily assume that $\cC_1$ or $\cC_2$ is a module category for a vertex operator algebra. In this context, a vertex tensor functor is a triple $(\Phi, \lbrace J_{P(z)}\rbrace_{z\in\CC^\times}, \varphi)$ where $\Phi: \cC_1\rightarrow\cC_2$ is a functor, $J_{P(z)}:   \tens_{P(z)}\circ(\Phi\times\Phi)\rightarrow\Phi\circ\tens_{P(z)}$ is a natural isomorphism for all $z\in\CC^\times$, and $\varphi: V_2\rightarrow\Phi(V_1)$ is an isomorphism (here $V_1$ and $V_2$ are the unit objects of the vertex tensor categories $\cC_1$ and $\cC_2$ respectively). These isomorphisms satisfy the following compatibility conditions:
\begin{itemize}
 \item The natural isomorphisms $J_{P(z)}$ are compatible with parallel transport isomorphisms in the sense that the diagram
 \begin{equation}\label{partranscompat}
  \xymatrixcolsep{5pc}
  \xymatrix{
  \Phi(M_1)\tens_{P(z_1)} \Phi(M_2) \ar[d]^{J_{P(z_1); M_1,M_2}} \ar[r]^{T_{\gamma; \Phi(M_1),\Phi(M_2)}} & \Phi(M_1)\tens_{P(z_2)}\Phi(M_2) \ar[d]^{J_{P(z_2); M_1,M_2}} \\
  \Phi(M_1\tens_{P(z_1)} M_2) \ar[r]^{\Phi(T_{\gamma; M_1,M_2})} & \Phi(M_1\tens_{P(z_2)} M_2) \\
  }
 \end{equation}
 commutes for all objects $M_1$, $M_2$ in $\cC_1$ and all continuous paths $\gamma$ in $\CC^\times$ beginning at $z_1$ and ending at $z_2$.
 
 \item The isomorphisms $J_{P(z)}$ and $\varphi$ are compatible with unit isomorphisms in the sense that the diagrams
 \begin{equation}\label{unitcompat}
 \xymatrixcolsep{3.5pc}
 \xymatrix{
 V_2\boxtimes_{P(z)}\Phi(M) \ar[d]^{\varphi\boxtimes_{P(z)} 1_{\Phi(M)}} \ar[r]^(.6){l_{P(z); \Phi(M)}} & \Phi(M) \\
 \Phi(V_1)\boxtimes_{P(z)}\Phi(M) \ar[r]^(.55){J_{P(z); V_1, M}} & \Phi(V_1\tens_{P(z)} M) \ar[u]_{\Phi(l_{P(z);M})} \\
 }\hspace{2em}\vspace{6em}
 \xymatrix{
 \Phi(M)\boxtimes_{P(z)} V_2 \ar[d]^{1_{\Phi(M)}\boxtimes_{P(z)}\varphi} \ar[r]^(.6){r_{P(z);\Phi(M)}} & \Phi(M) \\
 \Phi(M)\boxtimes_{P(z)}\Phi(V_1) \ar[r]^(.55){J_{P(z); M,V_1}} & \Phi(M\tens_{P(z)} V_1) \ar[u]_{\Phi(r_{P(z); M})} \\
 }
\end{equation}
commute for any object $M$ in $\cC_1$ and $z\in\CC^\times$.

\item The natural isomorphisms $J_{P(z)}$ are compatible with associativity isomorphisms in the sense that the diagram
\begin{equation}\label{assoccompat}
  \xymatrixcolsep{8.5pc}
  \xymatrix{
  \Phi(M_1)\boxtimes_{P(z_1)}(\Phi(M_2)\boxtimes_{P(z_2)}\Phi(M_3)) \ar[d]^{1_{\Phi(M_1)}\boxtimes_{P(z_1)} J_{P(z_2); M_2,M_3}} \ar[r]^(.49){\cA_{P(z_1,z_2); \Phi(M_1),\Phi(M_2),\Phi(M_3)}} & (\Phi(M_1)\boxtimes_{P(z_1-z_2)}\Phi(M_2))\tens_{P(z_2)}\Phi(M_3) \ar[d]_{J_{P(z_1-z_2);M_1,M_2}\boxtimes_{P(z_2)} 1_{\Phi(M_3)}} \\
  \Phi(M_1)\boxtimes_{P(z_1)}\Phi(M_2\tens_{P(z_2)} M_3) \ar[d]^{J_{P(z_1);M_1,M_2\tens_{P(z_2)} M_3}} & \Phi(M_1\tens_{P(z_1-z_2)} M_2)\tens_{P(z_2)}\Phi(M_3) \ar[d]_{J_{P(z_2);M_1\tens_{P(z_1-z_2)} M_2, M_3}} \\
  \Phi(M_1\tens_{P(z_1)}(M_2\tens_{P(z_2)} M_3)) \ar[r]^{\Phi(\cA_{P(z_1,z_2); M_1,M_2,M_3})} & \Phi((M_1\tens_{P(z_1-z_2)} M_2)\tens_{P(z_2)} M_3) \\
  }
 \end{equation}
 commutes for any objects $M_1$, $M_2$, $M_3$ in $\cC_1$ and $z_1,z_2\in\CC^\times$ such that $\vert z_1\vert>\vert z_2\vert>\vert z_1-z_2\vert>0$.
 
 \item The natural isomorphisms $J_{P(z)}$ are compatible with braiding isomorphisms in the sense that the diagram
 \begin{equation}\label{braidcompat}
 \xymatrixcolsep{5.5pc}
 \xymatrix{
 \Phi(M_1)\tens_{P(z)}\Phi(M_2) \ar[d]^{J_{P(z); M_1,M_2}} \ar[r]^{\cR_{P(z); \Phi(M_1),\Phi(M_2)}} & \Phi(M_2)\tens_{P(-z)} \Phi(M_1) \ar[d]^{J_{P(-z); M_2,M_1}} \\
 \Phi(M_1\tens_{P(z)} M_2) \ar[r]^{\Phi(\cR_{P(z); M_1,M_2})} & \Phi(M_2\tens_{P(-z)} M_1) \\
 }
\end{equation}
commutes for any objects $M_1$, $M_2$ in $\cC_1$ and $z\in\CC^\times$.
\end{itemize}
If the $J_{P(z)}$ are natural transformations but not necessarily isomorphisms, we say that $(\Phi, \lbrace J_{P(z)}\rbrace_{z\in\CC^\times}, \varphi)$ is a \textit{lax} vertex tensor functor.

Given that vertex tensor category structure induces braided tensor category structure, we would like to know that vertex tensor functors induce braided tensor functors:
\begin{theo}\label{vrtxtobraidfunctor}
 If $(\Phi, \lbrace J_{P(z)}\rbrace_{z\in\CC^\times}, \varphi)$ is a (lax) vertex tensor functor between vertex tensor categories $\cC_1$ and $\cC_2$, then $(\Phi, J=J_{P(1)}, \varphi)$ is a (lax) braided tensor functor with respect to the induced braided tensor category structures on $\cC_1$ and $\cC_2$.
\end{theo}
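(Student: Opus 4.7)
The plan is to set $J := J_{P(1)}$ and verify the three coherence conditions for a (lax) braided tensor functor: compatibility with unit, associativity, and braiding isomorphisms in the induced braided tensor category structures. Naturality of $J$ as a transformation $\boxtimes\circ(\Phi\times\Phi)\to\Phi\circ\boxtimes$ is immediate from naturality of $J_{P(1)}$, since $\boxtimes=\boxtimes_{P(1)}$ by construction. Unit compatibility is likewise immediate: the induced $l$ and $r$ are $l_{P(1)}$ and $r_{P(1)}$, so the required squares are precisely \eqref{unitcompat} with $z=1$.

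Braiding compatibility requires pasting two diagrams. Since the induced braiding decomposes as $\mathcal{R}=T_{-1\to 1}\circ\mathcal{R}_{P(1)}$, I would consider the composite rectangle whose left square is \eqref{braidcompat} at $z=1$, handling the $\mathcal{R}_{P(1)}$ factor, and whose right square is \eqref{partranscompat} applied to a path $\gamma$ from $-1$ to $1$ (and to the pair $(M_2,M_1)$), handling the parallel transport. The vertical map at the seam is $J_{P(-1);M_2,M_1}$, which cancels between the two squares, producing commutativity of the braiding coherence square for $\mathcal{R}$.

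The main obstacle is associativity. Recall that $\mathcal{A}$ is defined as a five-fold composition
\begin{equation*}
 \mathcal{A}=T_{r_2\to 1}\circ(T_{r_1-r_2\to 1}\boxtimes_{P(r_2)} 1)\circ\mathcal{A}_{P(r_1,r_2)}\circ(1\boxtimes_{P(r_1)}T_{1\to r_2})\circ T_{1\to r_1}
\end{equation*}
for any fixed $r_1>r_2>r_1-r_2>0$. I would apply both $\Phi(\mathcal{A})$ and $\mathcal{A}_{\Phi(M_1),\Phi(M_2),\Phi(M_3)}$ in their expanded forms and insert $J$-maps between each pair of successive factors, forming a ladder of five sub-diagrams to be shown commutative. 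The outer parallel transport squares for $T_{1\to r_1;M_1,M_2\boxtimes M_3}$ and $T_{r_2\to 1;M_1\boxtimes M_2,M_3}$ are direct instances of \eqref{partranscompat}. The middle hexagon for $\mathcal{A}_{P(r_1,r_2)}$ is \eqref{assoccompat}. The two remaining squares, involving $1\boxtimes_{P(r_1)}T_{1\to r_2;M_2,M_3}$ and $T_{r_1-r_2\to 1;M_1,M_2}\boxtimes_{P(r_2)}1$, are handled by combining \eqref{partranscompat} for $T_{1\to r_2}$ or $T_{r_1-r_2\to 1}$ with the naturality of the bifunctorial transformations $J_{P(r_1)}$ and $J_{P(r_2)}$ in the remaining argument. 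Pasting the five commutative subdiagrams along their shared edges, the intermediate maps $J_{P(r_1)}$, $J_{P(r_2)}$, $J_{P(r_1-r_2)}$ cancel between adjacent pieces, leaving a boundary decorated only with $J=J_{P(1)}$; this gives the associativity axiom for the braided tensor functor.

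The heart of the argument is thus purely diagrammatic bookkeeping — unravelling the definition of $\mathcal{A}$ and invoking the appropriate coherence diagrams of the vertex tensor functor at the appropriate values of $z$, $z_1$, $z_2$ — and contains no new analytic input. The same proof works verbatim in the lax case, since invertibility of the $J_{P(z)}$ is not used anywhere.
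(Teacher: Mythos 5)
Your proposal is correct and follows essentially the same route as the paper: expand the five-fold definition of $\cA$, insert the intermediate $J_{P(z)}$ maps, and discharge each sub-square by either \eqref{partranscompat}, naturality, or \eqref{assoccompat}, while the braiding square splits into a \eqref{braidcompat} piece at $z=1$ and a \eqref{partranscompat} piece for $T_{-1\to 1}$ sharing the vertical edge $J_{P(-1);M_2,M_1}$. The paper organizes the associativity pasting as two large rectangular diagrams rather than a single ladder, but the ingredients invoked and the logic are identical, and your observation that the argument never uses invertibility of the $J_{P(z)}$ (so it applies verbatim in the lax case) is also how the paper obtains the lax statement.
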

\begin{proof}
 We just need to show that $J$ and $\varphi$ are compatible with the unit, associativity, and braiding isomorphisms in the braided tensor categories $\cC_1$ and $\cC_2$. Compatibility with the unit isomorphisms is precisely the commutativity of the diagrams in \eqref{unitcompat} for the case $z=1$, so it remains to consider the associativity and braiding isomorphisms.
 
 For the associativity isomorphisms, we have to verify that the diagram
 \begin{equation}\label{assoccompat1}
  \xymatrixcolsep{6pc}
  \xymatrix{
  \Phi(M_1)\boxtimes(\Phi(M_2)\boxtimes\Phi(M_3)) \ar[d]^{1_{\Phi(M_1)}\boxtimes J_{M_2,M_3}} \ar[r]^{\cA_{\Phi(M_1),\Phi(M_2),\Phi(M_3)}} & (\Phi(M_1)\boxtimes\Phi(M_2))\tens\Phi(M_3) \ar[d]^{J_{M_1,M_2}\boxtimes 1_{\Phi(M_3)}} \\
  \Phi(M_1)\boxtimes\Phi(M_2\tens M_3) \ar[d]^{J_{M_1,M_2\tens M_3}} & \Phi(M_1\tens M_2)\tens\Phi(M_3) \ar[d]^{J_{M_1\tens M_2, M_3}} \\
  \Phi(M_1\tens(M_2\tens M_3)) \ar[r]^{\Phi(\cA_{M_1,M_2,M_3})} & \Phi((M_1\tens M_2)\tens M_3) \\
  }
 \end{equation}
 commutes for any $M_1$, $M_2$, $M_3$ in $\cC_1$. For this we choose positive real numbers $r_1, r_2$ such that $r_1>r_2>r_1-r_2>0$. Now we use the following diagrams; here to save space we suppress object subscripts on morphisms and write $W_i=\Phi(M_i)$ for $i=1,2,3$:
 \begin{equation*}
  \xymatrixcolsep{4.65pc}
  \xymatrix{
  W_1\tens_{P(r_1)}(W_2\tens_{P(r_2)} W_3) \ar[d]^{\cA_{P(r_1,r_2)}} \ar[r]^{1\tens_{P(r_1)} T_{r_2\to 1}} & W_1\tens_{P(r_1)}(W_2\tens W_3) \ar[r]^{T_{r_1\to 1}} & W_1\tens(W_2\tens W_3) \ar[d]^{\cA} \\
  (W_1\tens_{P(r_1-r_2)} W_2)\tens_{P(r_2)} W_3 \ar[d]^{J_{P(r_1-r_2)}\tens_{P(r_2)} 1} \ar[r]^(.55){T_{r_1-r_2\to 1}\tens_{P(r_2)} 1} & (W_1\tens W_2)\tens_{P(r_2)} W_3 \ar[d]^{J\tens_{P(r_2)} 1} \ar[r]^{T_{r_2\to 1}} & (W_1\tens W_2)\tens W_3 \ar[d]^{J\tens 1} \\
 \Phi(M_1\tens_{P(r_1-r_2)} M_2)\tens_{P(r_2)} W_3 \ar[d]^{J_{P(r_2)}} \ar[r]^(.54){\Phi(T_{r_1-r_2\to 1})\tens_{P(r_2)} 1} & \Phi(M_1\tens M_2)\tens_{P(r_2)} W_3 \ar[d]^{J_{P(r_2)}} \ar[r]^{T_{r_2\to 1}} & \Phi(M_1\tens M_2)\tens W_3 \ar[d]^{J} \\
 \Phi((M_1\tens_{P(r_1-r_2)} M_2)\tens_{P(r_2)} M_3) \ar[r]^(.54){\Phi(T_{r_1-r_2\to 1}\tens_{P(r_2)} 1)} & \Phi((M_1\tens M_2)\tens_{P(r_2)} M_3) \ar[r]^{\Phi(T_{r_2\to 1})} & \Phi((M_1\tens M_2)\tens M_3) \\
  }
 \end{equation*}
and
\begin{equation*}
 \xymatrixcolsep{4.65pc}
 \xymatrix{
  W_1\tens_{P(r_1)}(W_2\tens_{P(r_2)} W_3) \ar[d]^{1\tens_{P(r_1)} J_{P(r_2)}} \ar[r]^{1\tens_{P(r_1)} T_{r_2\to 1}} & W_1\tens_{P(r_1)}(W_2\tens W_3) \ar[d]^{1\tens_{P(r_1)} J} \ar[r]^{T_{r_1\to 1}} & W_1\tens(W_2\tens W_3) \ar[d]^{1\tens J} \\
 W_1\tens_{P(r_1)} \Phi(M_2\tens_{P(r_2)} M_3) \ar[d]^{J_{P(r_1)}} \ar[r]^{1\tens_{P(r_1)}\Phi(T_{r_2\to 1})} & W_1\tens_{P(r_1)}\Phi(M_2\tens M_3) \ar[d]^{J_{P(r_1)}} \ar[r]^{T_{r_1\to 1}} & W_1\tens\Phi(M_2\tens M_3) \ar[d]^{J}\\
 \Phi(M_1\tens_{P(r_1)}(M_2\tens_{P(r_2)} M_3)) \ar[d]^{\Phi(\cA_{P(r_1,r_2)})} \ar[r]^(.53){\Phi(1\tens_{P(r_1)} T_{r_2\to 1})} & \Phi(M_1\tens_{P(r_1)}(M_2\tens M_3)) \ar[r]^{\Phi(T_{r_1\to 1})} & \Phi(M_1\tens(M_2\tens M_3)) \ar[d]^{\Phi(\cA)}\\
 \Phi((M_1\tens_{P(r_1-r_2)} M_2)\tens_{P(r_2)} M_3) \ar[r]^(.54){\Phi(T_{r_1-r_2\to 1}\tens_{P(r_2)} 1)} & \Phi((M_1\tens M_2)\tens_{P(r_2)} M_3) \ar[r]^{\Phi(T_{r_2\to 1})} & \Phi((M_1\tens M_2)\tens M_3) \\
 }
\end{equation*}
The diagrams commute by definition of the associativity isomorphisms in the braided tensor categories $\cC_1$ and $\cC_2$, the naturality of parallel transport isomorphisms and the $J_{P(z)}$, and the compatibility of the $J_{P(z)}$ with parallel transport isomorphisms. Now the commutativity of \eqref{assoccompat1} follows from the compatibility of the $J_{P(z)}$ with the $P(r_1,r_2)$-associativity isomorphisms in $\cC_1$ and $\cC_2$.
 
 For the braiding isomorphisms, we have to verify that
 \begin{equation*}
  J_{M_2,M_1}\circ\cR_{\Phi(M_1),\Phi(M_2)} =\Phi(\cR_{M_1,M_2})\circ J_{M_1,M_2}
 \end{equation*}
for any $M_1$, $M_2$ in $\cC_1$. This follows from  the diagram
\begin{equation*}
 \xymatrixcolsep{5.2pc}
 \xymatrix{
 \Phi(M_1)\tens\Phi(M_2) \ar[d]^{J_{M_1,M_2}} \ar[r]^(.47){\cR_{P(1);\Phi(M_1),\Phi(M_2)}} & \Phi(M_2)\tens_{P(-1)}\Phi(M_2) \ar[d]^{J_{P(-1); M_2,M_1}} \ar[r]^(.54){T_{-1\to 1;\Phi(M_2),\Phi(M_1)}} & \Phi(M_2)\tens\Phi(M_1) \ar[d]^{J_{M_2,M_1}} \\
 \Phi(M_1\tens M_2) \ar[r]^{\Phi(\cR_{P(1); M_1,M_2})} & \Phi(M_2\tens_{P(-1)} M_1) \ar[r]^{\Phi(T_{-1\to 1; M_1,M_2})} & \Phi(M_2\tens M_1) \\
 }
\end{equation*}
which commutes by compatibility of $J$ with the $P(1)$-braiding and parallel transport isomorphisms, and the definition $\cR=T_{-1\to 1}\circ\cR_{P(1)}$ in $\cC_1$ and $\cC_2$.
\end{proof}

\begin{rema}
 Conversely, given a (lax) braided tensor functor $(\Phi, J, \varphi)$ between vertex tensor categories $\cC_1$ and $\cC_2$, one could attempt to extend to a (lax) vertex tensor functor by defining
 \begin{equation*}
  J_{P(z)}=\Phi(T_{1\to z})\circ J\circ T_{z\to 1}
 \end{equation*}
for $z\in\CC^\times$. In \cite[Theorem 3.68]{CKM}, it was shown that this construction indeed yields a vertex tensor functor in the special case that $\Phi$ is the induction tensor functor from a subcategory of $V$-modules to $A$-modules, where $V$ is a suitable vertex operator subalgebra of $A$. Actually, the proof in \cite{CKM} works generally, except that proving compatibility of $J_{P(z)}$ with the parallel transport isomorphisms needs the identity
\begin{equation*}
 T_{\gamma; W_1, W_2}  = \cR_{P(-1); W_2, W_1}\circ\cR_{P(1); W_1, W_2}
\end{equation*}
where $\gamma: [0,1]\rightarrow\CC^\times$ is the continuous path given by $\gamma(t)=e^{-2\pi i t}$. This identity always holds in vertex tensor categories constructed from vertex operator algebra modules, so it would make sense to include it as a coherence condition in the definition of vertex tensor category.
\end{rema}

\subsection{Group-module categories modified by abelian \texorpdfstring{$3$}{3}-cocycles}\label{sec:gpmodcats}

The notion of abelian intertwining algebra in \cite{DL} is a generalization of the notion of vertex operator algebra using the cohomology of abelian groups introduced by Eilenberg and MacLane (\cite{Ei}, \cite{MacL}, \cite{EM1}, \cite{EM2}), specifically the notion of normalized abelian $3$-cocycle. Let $A$ be an abelian group.
\begin{defi}
 An \textit{abelian $3$-cocycle} on $A$ with values in $\CC^\times$ is a pair of functions
 \begin{equation*}
  F: A\times A\times A\rightarrow\CC^\times
 \end{equation*}
and
\begin{equation*}
 \Omega: A\times A\rightarrow\CC^\times
\end{equation*}
which satisfies the following conditions:
\begin{enumerate}
 \item The \textit{pentagon identity}:
 \begin{equation*}
  F(\alpha_1,\alpha_2,\alpha_3)F(\alpha_1, \alpha_2+\alpha_3,\alpha_4)F(\alpha_2,\alpha_3,\alpha_4)=F(\alpha_1,\alpha_2,\alpha_3+\alpha_4)F(\alpha_1+\alpha_2,\alpha_3,\alpha_4)
 \end{equation*}
for all $\alpha_1,\alpha_2,\alpha_3,\alpha_4\in A$.

\item The \textit{hexagon identities}:
\begin{equation*}
 F(\alpha_1,\alpha_2,\alpha_3)\Omega(\alpha_1+\alpha_2,\alpha_3)F(\alpha_3,\alpha_1,\alpha_2) =\Omega(\alpha_2,\alpha_3)F(\alpha_1,\alpha_3,\alpha_2)\Omega(\alpha_1,\alpha_3)
\end{equation*}
and
\begin{equation*}
 F(\alpha_1,\alpha_2,\alpha_3)^{-1}\Omega(\alpha_1,\alpha_2+\alpha_3)F(\alpha_2,\alpha_3,\alpha_1)^{-1} = \Omega(\alpha_1,\alpha_2)F(\alpha_2,\alpha_1,\alpha_3)^{-1}\Omega(\alpha_1,\alpha_3)
\end{equation*}
for all $\alpha_1, \alpha_2,\alpha_3\in A$.
\end{enumerate}
The $3$-cocycle $(F,\Omega)$ is \textit{normalized} if
\begin{equation*}
 F(\alpha_1,\alpha_2, 0)=F(\alpha_1,0,\alpha_3)=F(0,\alpha_2,\alpha_3)=1
\end{equation*}
for $\alpha_1,\alpha_2,\alpha_3\in A$.
\end{defi}

\begin{rema}\label{OmegaNormal}
 It is clear from the hexagon identities that a normalized abelian $3$-cocycle also satisfies
\begin{equation*}
 \Omega(\alpha_1,0)=\Omega(0,\alpha_2)=1
\end{equation*}
for $\alpha_1,\alpha_2\in A$.
\end{rema}

\begin{rema}\label{quadform}
 The pentagon and hexagon identities for an abelian $3$-cocycle imply that the function $\alpha\mapsto\Omega(\alpha,\alpha)$ is a quadratic form with associated symmetric bimultiplicative form $(\alpha_1,\alpha_2)\mapsto\Omega(\alpha_1,\alpha_2)\Omega(\alpha_2,\alpha_1)$. This quadratic form is called the trace, and remarkably, the cohomology class of the abelian $3$-cocycle $(F,\Omega)$ is completely determined by the trace \cite{MacL}.
\end{rema}

We may view $A$ as a discrete topological group, so that its dual group $\widehat{A}$ of homomorphisms $\chi: A\rightarrow U(1)$ is a compact topological group. We shall be interested in compact groups $G$ which contain $\widehat{A}$ as a central subgroup. If $M$ is any finite-dimensional irreducible unitary representation of such a group $G$, then the action of $\widehat{A}$ on $M$ is given by a character of $\widehat{A}$, which is to say, an element of $A$. Thus the category $\rep\,G$ of finite-dimensional continuous representations of $G$ has an $A$-grading
\begin{equation*}
 \rep\,G=\bigoplus_{\alpha\in A} \rep^\alpha\,G
\end{equation*}
where for any representation $M$ in $\rep^\alpha\,G$, a character $\chi\in\widehat{A}$ acts on $M$ by the scalar $\chi(\alpha)$.

Recall that $\rep\,G$ is a rigid symmetric tensor category with unit object and duals, as well as unit, associativity, and braiding isomorphisms, inherited from the rigid symmetric tensor category of finite-dimensional vector spaces over $\mathbb{C}$. Given a normalized abelian $3$-cocycle $(F,\Omega)$ on $A$, we will use $\rep_{A,F,\Omega}\,G$ to denote the following modification of the tensor category structure on $\rep\,G$:
\begin{defi}
 $\rep_{A,F,\Omega}\,G$ is the braided tensor category whose objects, morphisms, unit object, and unit isomorphisms are the same as in $\rep\,G$. For objects $M_1$ in $\rep^{\alpha_1}\,G$, $M_2$ in $\rep^{\alpha_2}\,G$ and $M_3$ in $\rep^{\alpha_3}\,G$, the associativity isomorphism in $\rep_{A,F,\Omega}\,G$ is given by
 \begin{equation*}
  \cA_{M_1,M_2,M_3}(m_1\otimes(m_2\otimes m_3))=F(\alpha_1,\alpha_2,\alpha_3)^{-1}\left((m_1\otimes m_2)\otimes m_3\right)
 \end{equation*}
for $m_i\in M_i$. For objects $M_1$ in $\rep^{\alpha_1}\,G$ and $M_2\in\rep^{\alpha_2}\,G$, the braiding isomorphism in $\rep_{A,F,\Omega}\,G$ is given by
\begin{equation*}
 \cR_{M_1,M_2}(m_1\otimes m_2)=\Omega(\alpha_1,\alpha_2)^{-1}(m_2\otimes m_1)
\end{equation*}
for $m_i\in M_i$.
\end{defi}
\begin{rema}
 The pentagon identity for $F$ guarantees that the associativity isomorphisms in $\rep_{A,F,\Omega}\,G$ satisfy the pentagon axiom for a tensor category, and the hexagon identities for $F$ and $\Omega$ guarantee that the braiding isomorphisms in $\rep_{A,F,\Omega}\,G$ satisfy the hexagon axioms. The normalization of $F$ guarantees that the triangle axiom is satisfied in $\rep_{A,F,\Omega}\,G$. 
\end{rema}
\begin{rema}
The braided tensor category structure on $\mathrm{Rep}_{A,F,\Omega}\,G$ is independent of the cohomology class of the normalized abelian $3$-cocycle $(F,\Omega)$. In particular, $\mathrm{Rep}_{A,F,\Omega}\,G$ is isomorphic to $\rep\,G$ as braided tensor category if $(F,\Omega)$ is a normalized abelian $3$-coboundary, that is,
 \begin{equation*}
  F(\alpha_1,\alpha_2,\alpha_3) = \eta(\alpha_2,\alpha_3)\eta(\alpha_1+\alpha_2,\alpha_3)^{-1}\eta(\alpha_1,\alpha_2+\alpha_3)\eta(\alpha_1,\alpha_2)^{-1}
 \end{equation*}
and 
\begin{equation*}
 \Omega(\alpha_1,\alpha_2)=\eta(\alpha_1,\alpha_2)^{-1}\eta(\alpha_2,\alpha_1)
\end{equation*}
for some function $\eta: A\times A\rightarrow\CC^\times$ which satisfies $\eta(0,\alpha)=\eta(\alpha,0)=1$. In this case, the identity functor gives an isomorphism of categories between $\rep\,G$ and $\rep_{A,F,\Omega}\,G$, and the coboundary conditions guarantee that $\eta$ defines functorial isomorphisms compatible with the associativity and braiding isomorphisms of $\rep\,G$ and $\rep_{A,F,\Omega}\,G$.
\end{rema}

The braided tensor category $\rep_{A,F,\Omega}\,G$ is rigid, with the dual of a $G$-module $M$ given by $M^*$. However, it is necessary to modify either the evaluation or coevaluation morphism of $M$ by $F$. Specifically, we will take the coevaluation morphism for $M$ in $\rep^{\alpha}\,G$ to be
\begin{align*}
 i_M: \CC & \rightarrow M\otimes M^*\nonumber\\
  1 & \mapsto \sum_{i=1}^{\mathrm{dim}\,M} m_i\otimes m_i'
\end{align*}
where $\lbrace m_i\rbrace $ is a basis for $M$ and $\lbrace m_i'\rbrace$ is the corresponding dual basis for $M^*$; and we will take the evaluation morphism for $M$ to be
\begin{align*}
 e_M: M^*\otimes M &\rightarrow \mathbb{C}\nonumber\\
 m'\otimes m & \mapsto F(\alpha,-\alpha,\alpha)^{-1}\langle m',m\rangle.
\end{align*}
\begin{rema}\label{Frigidity}
 Verifying that the rigidity axioms for a tensor category are satisfied with this choice of evaluation and coevaluation amounts to the observation that $M^*$ is an object of $\rep^{-\alpha}\,G$, as well as the identity
 \begin{equation*}
  F(-\alpha,\alpha,-\alpha)=F(\alpha,-\alpha,\alpha)^{-1},
 \end{equation*}
for $\alpha\in A$, which follows from the pentagon identity and normalization of $F$.
\end{rema}

The rigid tensor category $\rep_{A,F,\Omega}\,G$ is also a ribbon category, with twist $\theta_M$ for $M$ an object of $\rep^\alpha\,G$ given by the scalar $\Omega(\alpha,\alpha)^{-1}$. The balancing equation
\begin{equation*}
 \theta_{M_1\otimes M_2}=\cR_{M_2,M_1}\circ\cR_{M_1,M_2}\circ(\theta_{M_1}\otimes\theta_{M_2})
\end{equation*}
follows from Remark \ref{quadform}. The identity $\theta_\CC=1_\CC$ is immediate from Remark \ref{OmegaNormal}, and the relation $\theta_{M}^*=\theta_{M^*}$ amounts to the identity
\begin{equation*}
 \Omega(\alpha,\alpha)=\Omega(-\alpha,-\alpha)
\end{equation*}
for $\alpha\in A$, which is immediate because $\alpha\mapsto\Omega(\alpha,\alpha)$ is a quadratic form (Remark \ref{quadform} again).

Recall that in a braided ribbon category, the trace $\mathrm{Tr}\,f$ of an endomorphism $f: M\rightarrow M$ is the endomorphism of the unit object $\vac$ given by the composition
\begin{equation*}
 \vac\xrightarrow{i_{M}} M\otimes M^*\xrightarrow{(\theta_M\circ f)\otimes 1_{M^*}} M\otimes M^*\xrightarrow{\cR_{M,M^*}} M^*\otimes M\xrightarrow{e_M} \vac.
\end{equation*}
The categorical dimension of $M$ is then $\mathrm{Tr}\,1_M$. For a $G$-module $M$ in $\rep^\alpha\,G$, the categorical dimension in $\rep_{A,F,\Omega}\,G$ is thus
\begin{align*}
 &\left(\Omega(\alpha,\alpha)  \Omega(\alpha,-\alpha)F(\alpha,-\alpha,\alpha)\right)^{-1}\dim_\CC\,M\nonumber\\
 &\hspace{4em}= \left(F(-\alpha,\alpha,\alpha)F(\alpha,-\alpha,\alpha)^{-1}\Omega(\alpha,0)F(-\alpha,\alpha,\alpha)^{-1} F(\alpha,-\alpha,\alpha)\right)^{-1}\dim_\CC\,M\nonumber\\
 & \hspace{4em}=\dim_\CC\,M,
\end{align*}
using the second hexagon identity in the case $\alpha_1=\alpha_3=\alpha$, $\alpha_2=-\alpha$. Thus categorical dimensions in $\rep_{A,F,\Omega}\,G$ agree with the ordinary dimensions of $G$-modules in $\rep\,G$.

\subsection{Abelian intertwining algebras and automorphisms}\label{sec:Abintwalg}

Here we recall the notion of abelian intertwining algebra from \cite{DL} and discuss automorphism groups of abelian intertwining algebras. Let $A$ be an abelian group and $(F,\Omega)$ a normalized abelian $3$-cocycle on $A$ with values in $\CC^\times$. Define the functions
\begin{equation*}
 q: A\rightarrow\CC/\ZZ,\hspace{3em}b: A\times A\rightarrow\CC/\ZZ
\end{equation*}
by
\begin{equation*}
 \Omega(\alpha,\alpha)=e^{2\pi i q(\alpha)},\hspace{3em}\Omega(\alpha_1,\alpha_2)\Omega(\alpha_2,\alpha_1)=e^{2\pi i b(\alpha_1,\alpha_2)},
\end{equation*}
so that $b$ is symmetric and bilinear. For convenience, we also (non-uniquely) fix a lift
\begin{equation*}
 \hat{b}: A\times A\rightarrow \CC
\end{equation*}
such that
\begin{equation*}
 b(\alpha_1,\alpha_2)=\hat{b}(\alpha_1,\alpha_2)+\ZZ
\end{equation*}
for any $\alpha_1,\alpha_2\in A$. We will also use the function
\begin{equation*}
 B: A\times A\times A\rightarrow\CC^\times
\end{equation*}
given by
\begin{equation*}
 B(\alpha_1,\alpha_2,\alpha_3)=F(\alpha_1,\alpha_2,\alpha_3)\Omega(\alpha_1,\alpha_2)F(\alpha_2,\alpha_1,\alpha_3)^{-1}.
\end{equation*}
for $\alpha_1,\alpha_2,\alpha_3\in A$.

\begin{defi}
 An \textit{abelian intertwining algebra} associated to an abelian group $A$ with normalized abelian $3$-cocycle $(F,\Omega)$ is an $A\times\CC$-graded vector space 
 \begin{equation*}
  V=\bigoplus_{\alpha\in A, n\in\CC} V^\alpha_{(n)}
 \end{equation*}
equipped with a \textit{vertex operator}
\begin{align*}
 Y: V & \rightarrow (\Endo V)\lbrace x\rbrace\\
 v & \mapsto Y(v,x)=\sum_{n\in\CC} v_n\,x^{-n-1}
\end{align*}
and two distinguished vectors: $\vac\in V^0_{(0)}$ called the \textit{vacuum} and $\omega\in V^0_{(2)}$ called the \textit{conformal vector}. For $\alpha\in A$ we use the notation $V^\alpha=\bigoplus_{n\in\CC} V^\alpha_{(n)}$, and for $n\in\CC$ we use the notation $V_{(n)}=\bigoplus_{\alpha\in A} V^\alpha_{(n)}$. The data satisfy the following axioms:
\begin{enumerate}
 \item The \textit{grading-restriction conditions}: For any $\alpha\in A$ and $R\in\RR$, $V^\alpha_{(n)}=0$ for all but finitely many $n$ with $\mathrm{Re}\,n\leq R$. (Optional additional grading-restriction condition: 
 \begin{equation*}
  V^\alpha=\bigoplus_{n\in-q(\alpha)} V^\alpha_{(n)}
 \end{equation*}
for any $\alpha\in A$.)

\item Lower truncation and compatibility of $Y$ with the $A$-grading: For any $\alpha_1,\alpha_2\in A$ and $v_1\in V^{\alpha_1}, v_2\in V^{\alpha_2}$,
\begin{equation*}
 x^{\hat{b}(\alpha_1,\alpha_2)} Y(v_1,x)v_2\in V^{\alpha_1+\alpha_2}((x)).
\end{equation*}

\item The \textit{vacuum} and \textit{creation properties}: For any $v\in V$, $Y(\vac, x)v=v$ and $Y(v,x)\vac\in V[[x]]$ with constant term $v$.

\item The \textit{Jacobi identity}: For any $\alpha_1,\alpha_2,\alpha_3\in A$ and $v_1\in V^{\alpha_1}$, $v_2\in V^{\alpha_2}$, $v_3\in V^{\alpha_3}$,
\begin{align*}
 x_0^{-1} & \left(\frac{x_1-x_2}{x_0}\right)^{\hat{b}(\alpha_1,\alpha_2)}\delta\left(\frac{x_1-x_2}{x_0}\right) Y(v_1,x_1)Y(v_2,x_2)v_3 \nonumber\\
 &\hspace{3em} -B(\alpha_1,\alpha_2,\alpha_3)\, x_0^{-1}\left(\dfrac{x_2-x_1}{e^{\pi i} x_0}\right)^{\hat{b}(\alpha_1,\alpha_2)} \delta\left(\frac{x_2-x_1}{-x_0}\right) Y(v_2,x_2)Y(v_1,x_1)v_3\nonumber\\
 & = F(\alpha_1,\alpha_2,\alpha_3)\, x_1^{-1}\left(\frac{x_2+x_0}{x_1}\right)^{\hat{b}(\alpha_1,\alpha_3)}\delta\left(\frac{x_2+x_0}{x_1}\right) Y(Y(v_1,x_0)v_2,x_2)v_3.
\end{align*}

\item The Virasoro relations: If we set $Y(\omega, x)=\sum_{n\in\ZZ} L(n) x^{-n-2}$ (note that this is indeed the form of $Y(\omega, x)$ by the normalization of $\Omega$ and compatibility of $Y$ with the $A$-grading), then
\begin{equation*}
 [L(m),L(n)]=(m-n)L(m+n)+ c \,\dfrac{m^3-m}{12}\delta_{m+n,0}  1_V,
\end{equation*}
for any $m,n\in\ZZ$, where $c\in\CC$ is called the \textit{central charge} of $V$. Moreover, $V_{(n)}$ is the $L(0)$-eigenspace of $V$ with eigenvalue $n$; for $v\in V_{(n)}$, $n$ is denoted $\mathrm{wt}\,v$ and called the \textit{conformal weight} of $v$.

\item The \textit{$L(-1)$-derivative property}:
\begin{equation*}
 Y(L(-1)v, x)=\dfrac{d}{dx}Y(v,x)
\end{equation*}
for any $v\in V$.
\end{enumerate}
\end{defi}

\begin{rema}
 The definition of abelian intertwining algebra associated to $(F,\Omega)$ does not depend on the choice of lift $\hat{b}$, as replacing $\hat{b}(\alpha_1,\alpha_2)$ with the same plus an integer does not change any of the formulas in compatibility of $Y$ with the $A$-grading or the Jacobi identity.
\end{rema}

\begin{rema}
 Because the cocycle in the definition of abelian intertwining algebra is normalized, the subspace $V^0$ is a vertex operator algebra (although possibly not $\ZZ$-graded if the optional grading restriction condition in the definition fails to hold), and each $V^\alpha$ for $\alpha\in A$ is a $V^0$-module. Even more, $Y\vert_{V^{\alpha_1}\otimes V^{\alpha_2}}$ is a $V^0$-intertwining operator of type $\binom{V^{\alpha_1+\alpha_2}}{V^{\alpha_1}\,V^{\alpha_2}}$ for all $\alpha_1,\alpha_2\in A$.
\end{rema}

\begin{exam}
 If we take an abelian group $A$ with $F=1$ and $\Omega=1$, then an abelian intertwining algebra associated to $(F,\Omega)$, together with the optional additional grading restriction condition, is a strongly $A$-graded conformal vertex algebra in the sense of \cite{HLZ1}.
\end{exam}

\begin{exam}\label{exam:VOSA}
 Suppose we take $A=\ZZ/2\ZZ$, $F=1$, and
 \begin{equation*}
  \Omega(i_1+2\ZZ, i_2+2\ZZ) =(-1)^{i_1 i_2}
 \end{equation*}
 for $i_1,i_2\in\ZZ$. Then $(F,\Omega)$ is a normalized abelian $3$-cocycle, and an abelian intertwining algebra associated to $(F,\Omega)$ is a vertex operator superalgebra. If we include the optional additional grading restriction condition, we get a vertex operator superalgebra of ``correct statistics'' in the terminology of \cite{CKL}, that is, $V^\even$ is $\ZZ$-graded and $V^\odd$ is $(\frac{1}{2}+\ZZ)$-graded.
\end{exam}

We will need to use the following associativity property of the vertex operator for an abelian intertwining algebra, which is a consequence of the Jacobi identity (see \cite[Remark 12.31]{DL}): for $v_1\in V^{\alpha_1}$, $v_2\in V^{\alpha_2}$, $v_3\in V^{\alpha_3}$, and $v'\in V'$ (where $V'=\bigoplus_{\alpha\in A, n\in\CC} (V^{\alpha}_{(n)})^*$ is the graded dual of $V$),
\begin{equation*}
 \langle v', Y(v_1, z_1)Y(v_2, z_2)v_3\rangle (z_1-z_2)^{\hat{b}(\alpha_1,\alpha_2)} z_1^{\hat{b}(\alpha_1,\alpha_3)} z_2^{\hat{b}(\alpha_2,\alpha_3)}
\end{equation*}
and
\begin{equation*}
 F(\alpha_1,\alpha_2,\alpha_3)\langle v', Y(Y(v_1,z_1-z_2)v_2,z_2)v_3\rangle (z_1-z_2)^{\hat{b}(\alpha_1,\alpha_2)} z_1^{\hat{b}(\alpha_1,\alpha_3)} z_2^{\hat{b}(\alpha_2,\alpha_3)}
\end{equation*}
are the expansions of a common rational function, with poles possible only at $z_1=0$, $z_2 = 0$, and $z_1=z_2$, in the regions $\vert z_1\vert>\vert z_2\vert>0$ and $\vert z_2\vert>\vert z_1-z_2\vert>0$, respectively. If we use $f(z_1,z_2)$ to represent this common rational function (which of course also depends on $v_1$, $v_2$, $v_3$, and $v'$), then we have the following equalities of multivalued analytic functions:
\begin{equation*}
 \langle v', Y(v_1,z_1)Y(v_2, z_2)v_3\rangle = z_1^{-\hat{b}(\alpha_1,\alpha_2)-\hat{b}(\alpha_1,\alpha_3)} z_2^{-\hat{b}(\alpha_2,\alpha_3)}\left(1-\frac{z_2}{z_1}\right)^{-\hat{b}(\alpha_1,\alpha_2)} f(z_1,z_2)
\end{equation*}
on the region $\vert z_1\vert>\vert z_2\vert>0$, and
\begin{align*}
 \langle v', Y(Y(v_1, & z_1-z_2)v_2,z_2)v_3\rangle\nonumber\\
 &= F(\alpha_1,\alpha_2,\alpha_3)^{-1} (z_1-z_2)^{-\hat{b}(\alpha_1,\alpha_2)} z_2^{-\hat{b}(\alpha_1,\alpha_3)-\hat{b}(\alpha_2,\alpha_3)} \left(1+\frac{z_1-z_2}{z_2}\right)^{-\hat{b}(\alpha_1,\alpha_3)} f(z_1,z_2)
\end{align*}
on the region $\vert z_2\vert>\vert z_1-z_2\vert>0$. 

We would like to compare the branches of these multivalued functions obtained by specializing complex powers of $z_1$, $z_2$, and $z_1-z_2$ using the branch of logarithm $\log z = \log\vert z\vert +i\arg{z}$ for which $0\leq\arg{z}<2\pi$. To this end, for $z_1,z_2\in\CC^\times$ for which $\vert z_1\vert>\vert z_2\vert$, we write
\begin{equation*}
 \log(z_1-z_2) = \log z_1 +\log\left(1-\frac{z_2}{z_1}\right)+2\pi i p_{z_1,z_2},
\end{equation*}
where $\log\left(1-\frac{z_2}{z_1}\right)$ represents the standard series of $\log(1-x)$ centered at $x=0$ specialized to $x=z_2/z_1$, and   $p_{z_1,z_2}$ is some integer. Note that if also $\vert z_2\vert>\vert z_1-z_2\vert>0$ we then have
\begin{equation*}
 \log z_1=\log(z_2-(z_2-z_1))=\log z_2 +\log\left(1+\frac{z_1-z_2}{z_2}\right)+2\pi i p_{z_2,z_2-z_1}.
\end{equation*}
With this notation, if $\vert z_1\vert>\vert z_2\vert>\vert z_1-z_2\vert>0$, we can calculate:
\begin{align*}
 \langle v', Y(v_1, &  e^{\log z_1})Y(v_2, e^{\log z_2})v_3\rangle = (e^{\log z_1})^{-\hat{b}(\alpha_1,\alpha_2)-\hat{b}(\alpha_1,\alpha_3)} (e^{\log z_2})^{-\hat{b}(\alpha_2,\alpha_3)}\left(1-\frac{z_2}{z_1}\right)^{-\hat{b}(\alpha_1,\alpha_2)} f(z_1,z_2)\nonumber\\
 & = (e^{\log(z_1-z_2)})^{-\hat{b}(\alpha_1,\alpha_2)} e^{2\pi i p_{z_1,z_2} \hat{b}(\alpha_1,\alpha_2)} (e^{\log z_2})^{-\hat{b}(\alpha_1,\alpha_3)}\left(1+\frac{z_1-z_2}{z_2}\right)^{-\hat{b}(\alpha_1,\alpha_3)}\cdot\nonumber\\
 &\hspace{5em} \cdot e^{-2\pi i p_{z_2,z_2-z_1}\hat{b}(\alpha_1,\alpha_3)} (e^{\log z_2})^{-\hat{b}(\alpha_2,\alpha_3)} f(z_1,z_2)\nonumber\\
 & =\left(\Omega(\alpha_1,\alpha_2)\Omega(\alpha_2,\alpha_1)\right)^{p_{z_1,z_2}}\left(\Omega(\alpha_1,\alpha_3)\Omega(\alpha_3,\alpha_1)\right)^{-p_{z_2,z_2-z_1}}\cdot\nonumber\\
 &\hspace{5em} \cdot F(\alpha_1,\alpha_2,\alpha_3)\langle v', Y(Y(v_1, e^{\log(z_1-z_2)})v_2, e^{\log z_2})v_3\rangle .
\end{align*}
We state the result of this calculation as a proposition:
\begin{propo}\label{abintwalgassoc}
 If $z_1,z_2\in\CC^\times$ satisfy $\vert z_1\vert>\vert z_2\vert>\vert z_1-z_2\vert>0$, then
 \begin{align*}
   \langle v', Y(Y(v_1, & e^{\log(z_1-z_2)})v_2, e^{\log z_2})v_3 =\cA_{z_1,z_2}(\alpha_1,\alpha_2,\alpha_3) \langle v', Y(v_1, e^{\log z_1})Y(v_2,e^{\log z_2})v_3\rangle,  
 \end{align*}
 where
 \begin{equation*}
  \cA_{z_1,z_2}(\alpha_1,\alpha_2,\alpha_3) = \left(\Omega(\alpha_1,\alpha_2)\Omega(\alpha_2,\alpha_1)\right)^{-p_{z_1,z_2}}\left(\Omega(\alpha_1,\alpha_3)\Omega(\alpha_3,\alpha_1)\right)^{p_{z_2,z_2-z_1}} F(\alpha_1,\alpha_2,\alpha_3)^{-1},
 \end{equation*}
for any $v_1\in V^{\alpha_1}$, $v_2\in V^{\alpha_2}$, $v_3\in V^{\alpha_3}$, and $v'\in V'$.
\end{propo}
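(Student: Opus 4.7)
The plan is essentially to organize the three-line computation that appears immediately before the proposition statement into a proof. I would start from the associativity of the vertex operator in an abelian intertwining algebra (a consequence of the Jacobi identity, recorded in \cite[Remark 12.31]{DL} and recalled just above the proposition): there is a rational function $f(z_1,z_2)$, regular away from $z_1=0$, $z_2=0$, and $z_1=z_2$, whose expansion in the region $\vert z_1\vert>\vert z_2\vert>0$ equals $\langle v',Y(v_1,z_1)Y(v_2,z_2)v_3\rangle$ times explicit prefactors of complex powers of $z_1$, $z_2$, and $(1-z_2/z_1)$, while its expansion in the region $\vert z_2\vert>\vert z_1-z_2\vert>0$ equals $F(\alpha_1,\alpha_2,\alpha_3)\langle v',Y(Y(v_1,z_1-z_2)v_2,z_2)v_3\rangle$ times a similar but distinct set of prefactors. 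Taking the quotient of the two expressions cancels $f(z_1,z_2)$ and reduces the proposition to an identity among ratios of branches of complex powers.

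To evaluate that ratio on the overlap region $\vert z_1\vert>\vert z_2\vert>\vert z_1-z_2\vert>0$ using the branch $\log z=\log\vert z\vert+i\arg z$ with $0\le\arg z<2\pi$, I would substitute the branch identities
\begin{align*}
\log(z_1-z_2) &= \log z_1+\log(1-z_2/z_1)+2\pi i\, p_{z_1,z_2},\\
\log z_1 &= \log z_2+\log(1+(z_1-z_2)/z_2)+2\pi i\, p_{z_2,z_2-z_1},
\end{align*}
where the series $\log(1\pm(\cdots))$ are the standard convergent expansions centered at $0$. After raising to the relevant complex exponents, this converts each factor $(1-z_2/z_1)^{-\hat b(\alpha_1,\alpha_2)}$ and $(1+(z_1-z_2)/z_2)^{-\hat b(\alpha_1,\alpha_3)}$ into the corresponding factors built directly from $\log(z_1-z_2)$ and $\log z_2$, up to phase factors $e^{2\pi i\, p_{z_1,z_2}\hat b(\alpha_1,\alpha_2)}$ and $e^{-2\pi i\, p_{z_2,z_2-z_1}\hat b(\alpha_1,\alpha_3)}$ respectively, which arise from the $2\pi i\, p$ shifts.

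Finally, I would invoke the identity $e^{2\pi i\, \hat b(\alpha,\beta)}=\Omega(\alpha,\beta)\Omega(\beta,\alpha)$, which follows from the definition $\Omega(\alpha,\beta)\Omega(\beta,\alpha)=e^{2\pi i\, b(\alpha,\beta)}$ together with $b(\alpha,\beta)=\hat b(\alpha,\beta)+\ZZ$, to rewrite the two phase factors as the stated integer powers of $\Omega(\alpha_1,\alpha_2)\Omega(\alpha_2,\alpha_1)$ and $\Omega(\alpha_1,\alpha_3)\Omega(\alpha_3,\alpha_1)$. Combined with the factor $F(\alpha_1,\alpha_2,\alpha_3)^{-1}$ produced by inverting the $F(\alpha_1,\alpha_2,\alpha_3)$ on the iterated-vertex-operator side of the associativity identity, this assembles precisely into $\cA_{z_1,z_2}(\alpha_1,\alpha_2,\alpha_3)$. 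The only real obstacle is the sign bookkeeping of the integer exponents $p_{z_1,z_2}$ and $p_{z_2,z_2-z_1}$ in the two $\Omega$-ratios; no analytic input beyond the rational-function associativity of \cite{DL} is needed.
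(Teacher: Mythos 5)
Your proposal is correct and matches the paper's own argument exactly: the paper proves this proposition by carrying out precisely the computation you describe, namely expanding both the product and the iterate against the common rational function $f(z_1,z_2)$ from \cite[Remark 12.31]{DL}, substituting the branch relations defining $p_{z_1,z_2}$ and $p_{z_2,z_2-z_1}$, and converting the resulting phases $e^{\pm 2\pi i p\,\hat b(\cdot,\cdot)}$ into powers of $\Omega(\cdot,\cdot)\Omega(\cdot,\cdot)$. Nothing further is needed.
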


\begin{rema}\label{abintwalgassocreal}
 If $r_1$ and $r_2$ are positive real numbers satisfying $r_1>r_2>r_1-r_2>0$, then it is clear that $p_{r_1,r_2}=p_{r_2,r_2-r_1} = 0$. Thus we have $\cA_{r_1,r_2}(\alpha_1,\alpha_2,\alpha_3)=F(\alpha_1,\alpha_2,\alpha_3)^{-1}$ in this case.
\end{rema}

In addition to the above associativity result for $Y$, we will need the following skew-symmetry result, which can be found in \cite[Lemma 2.35]{Ca} (see also the proof of \cite[Proposition 2.6]{GL}):
\begin{propo}\label{abintalgskewsym}
 For $v_1\in V^{\alpha_1}$ and $v_2\in V^{\alpha_2}$,
 \begin{equation*}
  Y(v_1,x)v_2=\Omega(\alpha_1,\alpha_2) e^{x L(-1)}Y(v_2, e^{\pi i} x) v_1.
 \end{equation*}
\end{propo}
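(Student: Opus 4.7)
My plan is to apply the standard strategy for proving skew-symmetry in vertex-operator-algebra-like settings: specialize the Jacobi identity from the definition of abelian intertwining algebra to $v_3 = \vac$ and use the creation property $Y(v, x)\vac = e^{xL(-1)} v$. Setting $\alpha_3 = 0$ triggers several simplifications from the normalization of the cocycle: $F(\alpha_1, \alpha_2, 0) = 1$, so $B(\alpha_1, \alpha_2, 0) = \Omega(\alpha_1, \alpha_2)$, and by Remark \ref{OmegaNormal} the lift $\hat{b}$ may be chosen so that $\hat{b}(\alpha_i, 0) = 0$. The creation property then rewrites the three terms of the Jacobi identity as expressions in $Y(v_1, x_1) e^{x_2 L(-1)} v_2$, $Y(v_2, x_2) e^{x_1 L(-1)} v_1$, and $e^{x_2 L(-1)} Y(v_1, x_0) v_2$ respectively, with coefficients involving $\Omega(\alpha_1, \alpha_2)$, $e^{\pi i \hat{b}(\alpha_1,\alpha_2)}$, and formal-distribution factors.

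Next I would extract the skew-symmetry formula via a formal residue argument: multiply through by $x_0^{\hat{b}(\alpha_1, \alpha_2)}$ to make all $x_0$-exponents integer, take $\mathrm{Res}_{x_0}$, and use the $L(-1)$-conjugation formula $e^{x_2 L(-1)} Y(v_1, x_1 - x_2) = Y(v_1, x_1) e^{x_2 L(-1)}$ (a formal consequence of the $L(-1)$-derivative property applied with the conformal vector, which lies in $V^0$) together with the standard delta-function identity relating $x_0^{-1}\delta\bigl(\frac{x_1-x_2}{x_0}\bigr)$, $x_0^{-1}\delta\bigl(\frac{x_2-x_1}{-x_0}\bigr)$, and $x_1^{-1}\delta\bigl(\frac{x_2+x_0}{x_1}\bigr)$. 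After clearing the common factor $(x_1-x_2)^{\hat{b}(\alpha_1, \alpha_2)}$ (expanded suitably) and solving for the $Y(v_2, x_2) e^{x_1 L(-1)} v_1$ term, one obtains the desired identity up to branch factors. Since $Y(v_2, x_2) e^{x_1 L(-1)} v_1$ is a formal series in only non-negative integer powers of $x_1$, comparing coefficients in $x_1$ (or equivalently specializing $x_1 \to 0$) and then relabeling $x_2 \to x$ and swapping the roles of $v_1, v_2$ yields the stated formula.

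The main obstacle I anticipate is the careful handling of branches coming from the non-integer exponent $\hat{b}(\alpha_1, \alpha_2)$. The factor $e^{\pi i}$ that appears explicitly inside the argument $\frac{x_2 - x_1}{e^{\pi i} x_0}$ of the Jacobi identity must translate through the calculation into the $e^{\pi i} x$ appearing in $Y(v_2, e^{\pi i} x) v_1$ on the right-hand side of the skew-symmetry formula. This requires keeping consistent the expansion conventions for $(x_1-x_2)^{\hat{b}}$ versus $(x_2-x_1)^{\hat{b}}$ and tracking how the branch choice $(e^{\pi i})^{\hat{b}} = e^{\pi i \hat{b}}$ interacts with the non-integer conformal-weight grading of $Y(v_2, x) v_1 \in V^{\alpha_1+\alpha_2}\lbrace x\rbrace$. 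Once these conventions are pinned down, the remaining calculation parallels the standard vertex operator algebra proof of skew-symmetry, and I would consult \cite[Lemma 1.2.2]{Ca} or \cite[Proposition 2.6]{GL} for the detailed analogous arguments.
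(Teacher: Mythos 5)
The paper gives no proof of this proposition, deferring to \cite[Lemma~1.2.2]{Ca} and \cite[Proposition~2.6]{GL}, and your sketch follows precisely the standard route those references take: specialize the Jacobi identity to $v_3=\vac$ so that $F(\alpha_1,\alpha_2,0)=1$, $B(\alpha_1,\alpha_2,0)=\Omega(\alpha_1,\alpha_2)$, and $\hat b(\alpha_i,0)=0$, then invoke the creation property and $L(-1)$-conjugation, clear the nonintegral $x_0$-exponent, and extract skew-symmetry by a formal residue argument while tracking the $e^{\pi i}$ branch. One small correction to how you phrased the residue step: multiplying by $x_0^{\hat b(\alpha_1,\alpha_2)}$ alone and taking $\mathrm{Res}_{x_0}$ does not make the iterate term on the right-hand side of the Jacobi identity vanish (the resulting $x_0^{\hat b}Y(v_1,x_0)v_2\in V^{\alpha_1+\alpha_2}((x_0))$ still has finitely many negative powers of $x_0$); instead one should multiply by $x_0^{\hat b(\alpha_1,\alpha_2)+K}$ for $K\gg 0$, whereupon $\mathrm{Res}_{x_0}$ kills the iterate term and yields the generalized weak-commutativity relation, and then setting $x_1=0$ (now legitimate since all $x_1$-exponents are nonnegative) produces exactly the stated formula with the $\Omega(\alpha_1,\alpha_2)$ and $e^{\pi i}$ factors falling out as you anticipate.
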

Iterating skew-symmetry $2p$ times for $p\in\ZZ$ yields the following corollary:
\begin{corol}\label{abintalgmono}
 For $v_1\in V^{\alpha_1}$ and $v_2\in V^{\alpha_2}$,
 \begin{equation*}
  Y(v_1,x)v_2=\left(\Omega(\alpha_1,\alpha_2)\Omega(\alpha_2,\alpha_1)\right)^{p} Y(v_1, e^{2\pi i p} x)v_2.
 \end{equation*}
\end{corol}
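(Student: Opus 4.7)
The plan is to iterate Proposition \ref{abintalgskewsym} twice and observe that the $e^{xL(-1)}$ factors cancel. Applying skew-symmetry once gives
\begin{equation*}
 Y(v_1,x)v_2 = \Omega(\alpha_1,\alpha_2)\, e^{xL(-1)} Y(v_2, e^{\pi i} x) v_1.
\end{equation*}
Now I apply Proposition \ref{abintalgskewsym} again to $Y(v_2, e^{\pi i}x) v_1$, with the roles of $v_1$ and $v_2$ (and $\alpha_1, \alpha_2$) interchanged and with the formal variable specialized to $e^{\pi i}x$, to obtain
\begin{equation*}
 Y(v_2, e^{\pi i}x) v_1 = \Omega(\alpha_2,\alpha_1)\, e^{e^{\pi i} x L(-1)} Y(v_1, e^{2\pi i} x) v_2.
\end{equation*}

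Combining these two applications and using $e^{\pi i} = -1$, the exponential prefactors satisfy $e^{xL(-1)} e^{e^{\pi i} x L(-1)} = e^{xL(-1)} e^{-xL(-1)} = 1$, so
\begin{equation*}
 Y(v_1,x)v_2 = \Omega(\alpha_1,\alpha_2)\Omega(\alpha_2,\alpha_1)\, Y(v_1, e^{2\pi i} x) v_2.
\end{equation*}
This establishes the corollary in the case $p=1$. For general $p\geq 1$, I proceed by induction: assuming the formula holds for some $p$, I apply the $p=1$ case to the right-hand side with $x$ replaced by $e^{2\pi i p} x$ (noting that the $\alpha$-grading of $v_1$ and $v_2$ is unchanged, so the same scalar $\Omega(\alpha_1,\alpha_2)\Omega(\alpha_2,\alpha_1)$ appears) to pick up one more factor and advance the argument to $e^{2\pi i (p+1)} x$. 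Finally, for $p<0$, I simply solve the identity for $Y(v_1, e^{2\pi i p} x) v_2$ and rename.

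There is no real obstacle here; the only thing to be careful about is the bookkeeping for the translation operator, specifically verifying that $e^{xL(-1)}$ and $e^{e^{\pi i} x L(-1)}$ are mutual inverses so that the full action on $Y(v_1, e^{2\pi i}x) v_2$ is trivial. Once that cancellation is observed, the rest is a straightforward induction, and no further input from the cocycle $(F,\Omega)$ beyond the product $\Omega(\alpha_1,\alpha_2)\Omega(\alpha_2,\alpha_1)$ is needed.
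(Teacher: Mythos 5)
Your argument is correct and is precisely the paper's intended proof: the paper simply remarks that the corollary follows by "iterating skew-symmetry $2p$ times," and your two applications of Proposition \ref{abintalgskewsym}, with the observation that $e^{xL(-1)}$ and $e^{e^{\pi i}xL(-1)} = e^{-xL(-1)}$ cancel, followed by induction and the remark on $p<0$, spell out exactly that iteration.
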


We will also need to consider products of three vertex operators. The following proposition is a special case of \cite[Theorem 12.33]{DL}, which generalizes \cite[Proposition 3.5.1]{FHL} to abelian intertwining algebras:
\begin{propo}\label{threevrtxops}
 For any $v_1\in V^{\alpha_1}$, $v_2\in V^{\alpha_2}$, $v_3\in V^{\alpha_3}$, $v_4\in V^{\alpha_4}$, and $v'\in V'$, the series
 \begin{equation*}
  \langle v', Y(v_1, z_1)Y(v_2,z_2)Y(v_3,z_3)v_4\rangle 
 \end{equation*}
converges absolutely in the region $\vert z_1\vert>\vert z_2\vert>\vert z_3\vert>0$ and can be extended to a multivalued analytic function of the form
 \begin{equation*}
  \frac{f(z_1,z_2,z_3)}{z_1^{\hat{b}(\alpha_1,\alpha_4)}z_2^{\hat{b}(\alpha_2,\alpha_4)}z_3^{\hat{b}(\alpha_3,\alpha_4)} (z_1-z_2)^{\hat{b}(\alpha_1,\alpha_2)} (z_1-z_3)^{\hat{b}(\alpha_1,\alpha_3)} (z_2-z_3)^{\hat{b}(\alpha_2,\alpha_3)}}
 \end{equation*}
where $f(z_1,z_2,z_3)$ is a rational function with poles possible only at $z_i=0$ and $z_i=z_j$ for $1\leq i,j\leq 3$ and $i\neq j$.
\end{propo}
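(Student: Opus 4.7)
The plan is to reduce to the two-vertex-operator rationality that underlies Proposition~\ref{abintwalgassoc}, by iterating associativity in the style of \cite[Proposition 3.5.1]{FHL} while tracking the fractional-power factors produced by the abelian $3$-cocycle $(F,\Omega)$.

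First, I would work in the smaller open region where $\vert z_1\vert>\vert z_2\vert>\vert z_3\vert>\vert z_2-z_3\vert>0$, inside the region $\vert z_1\vert>\vert z_2\vert>\vert z_3\vert>0$ of interest. Applying Proposition~\ref{abintwalgassoc} to the inner pair $Y(v_2,e^{\log z_2})Y(v_3,e^{\log z_3})v_4$, the three-point matrix coefficient equals an explicit cocycle factor times
\begin{equation*}
\langle v',\, Y(v_1,e^{\log z_1})\, Y(Y(v_2,e^{\log(z_2-z_3)})v_3,\,e^{\log z_3})v_4\rangle,
\end{equation*}
so only two outer vertex operators remain, with the second parametrized by $z_2-z_3$. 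By the same Jacobi-identity-based analysis sketched in the text preceding Proposition~\ref{abintwalgassoc}, such a two-point matrix coefficient has the form
\begin{equation*}
z_1^{-\hat b(\alpha_1,\alpha_2+\alpha_3+\alpha_4)}\, z_3^{-\hat b(\alpha_2+\alpha_3,\alpha_4)}\, (z_1-z_3)^{-\hat b(\alpha_1,\alpha_2+\alpha_3)}\, (z_2-z_3)^{-\hat b(\alpha_2,\alpha_3)}\, P(z_1,z_3,z_2-z_3),
\end{equation*}
for some Laurent polynomial $P$; the factor $(z_2-z_3)^{-\hat b(\alpha_2,\alpha_3)}$ is intrinsic to the iterated operator $Y(v_2,\cdot)v_3$, and the polynomial (rather than infinite-series) structure in $z_2-z_3$ uses the grading-restriction conditions and lower truncation, which ensure only finitely many conformal-weight components below any fixed bound contribute. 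A purely algebraic rearrangement, splitting $z_1^{\hat b(\alpha_1,\alpha_2+\alpha_3+\alpha_4)}$ as $z_1^{\hat b(\alpha_1,\alpha_2)}z_1^{\hat b(\alpha_1,\alpha_3)}z_1^{\hat b(\alpha_1,\alpha_4)}$ and absorbing the excess $z_i$-factors into corresponding $(z_i-z_j)^{\hat b(\alpha_i,\alpha_j)}$ factors (at the cost of regular factors that can be pulled into the numerator), then puts the expression into the denominator form claimed in the proposition.

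Second, I would extend to the full region $\vert z_1\vert>\vert z_2\vert>\vert z_3\vert>0$. Absolute convergence of the original iterated series there follows by applying the two-vertex-operator convergence to each stage: for fixed $z_3$ of small modulus, $Y(v_3,e^{\log z_3})v_4$ is a well-defined element of the algebraic completion $\overline{V^{\alpha_3+\alpha_4}}$ with absolutely convergent matrix coefficients, and likewise after applying $Y(v_2,e^{\log z_2})$ and then $Y(v_1,e^{\log z_1})$, with bounds uniform on compact subsets of the region provided by grading restriction. Once absolute convergence is established, the rational form identified in the subregion extends uniquely to the full region by analytic continuation.

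The main obstacle is keeping the branches of logarithm and the cocycle factors consistent throughout the iteration, so that the specific denominator structure of the statement is recovered \emph{exactly}. A secondary subtlety is to confirm that the Laurent polynomial representation of the numerator $f(z_1,z_2,z_3)$ is independent of the chosen lift $\hat b$: a shift of $\hat b(\alpha_i,\alpha_j)$ by an integer alters the denominator only by a monomial factor that is reabsorbed into $f$, leaving the underlying multivalued function unchanged.
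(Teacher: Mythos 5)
The paper gives no independent proof of this proposition; it simply cites \cite[Theorem~12.33]{DL}, a generalization of \cite[Proposition~3.5.1]{FHL}. Your proposal attempts a self-contained argument by iterating the two-operator associativity, but it has two genuine gaps, and they are linked.

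First, after rewriting $Y(v_2,e^{\log z_2})Y(v_3,e^{\log z_3})v_4$ as (a cocycle factor times) $Y(Y(v_2,e^{\log(z_2-z_3)})v_3,e^{\log z_3})v_4$, the object $u:=Y(v_2,e^{\log(z_2-z_3)})v_3$ is not a single vector but an element of the completion $\overline{V^{\alpha_2+\alpha_3}}$, an infinite sum of homogeneous components $u_m$ weighted by $(z_2-z_3)^{-m-1-\hat b(\alpha_2,\alpha_3)}$ with $m\to -\infty$. The two-operator rationality result gives a Laurent polynomial $g_m(z_1,z_3)$ for each \emph{fixed} $u_m$, but the degrees of $g_m$ in $z_1$ and $z_3$ grow unboundedly as $\mathrm{wt}\,u_m\to\infty$, so the resulting expression $\sum_m (z_2-z_3)^{-m-1}g_m(z_1,z_3)$ is an honest infinite series, not a Laurent polynomial. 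Lower truncation only bounds the \emph{negative} powers of $z_2-z_3$; it does nothing to truncate the positive powers. Grading restriction (finite-dimensionality of weight spaces) also does not help, because pairing against a fixed $v'$ constrains the sum of exponents in $z_1,z_3$ for each $m$ but still allows infinitely many $m$ to contribute.

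Second, even granting a Laurent polynomial $P$, the asserted ``purely algebraic rearrangement'' of the intermediate denominator into the stated one fails. Correcting a slip in your exponent (the two-operator result contributes $z_1^{\hat b(\alpha_1,\alpha_4)}$, not $z_1^{\hat b(\alpha_1,\alpha_2+\alpha_3+\alpha_4)}$), the intermediate denominator is $z_1^{\hat b(\alpha_1,\alpha_4)}\,z_3^{\hat b(\alpha_2,\alpha_4)+\hat b(\alpha_3,\alpha_4)}\,(z_1-z_3)^{\hat b(\alpha_1,\alpha_2)+\hat b(\alpha_1,\alpha_3)}\,(z_2-z_3)^{\hat b(\alpha_2,\alpha_3)}$, and the ratio to the target denominator is $\bigl(z_2/z_3\bigr)^{\hat b(\alpha_2,\alpha_4)}\bigl((z_1-z_2)/(z_1-z_3)\bigr)^{\hat b(\alpha_1,\alpha_2)}$. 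This is a genuinely multivalued, non--Laurent-polynomial factor (with zeros at $z_2=0$, $z_1=z_2$ and poles at $z_3=0$, $z_1=z_3$), so it cannot be ``pulled into the numerator.'' In fact, the two gaps compensate each other: the correct intermediate object has $P$ an infinite power series in $z_2-z_3$ whose sum, against the extra multivalued factor, reproduces the actual Laurent polynomial $f(z_1,z_2,z_3)$. But your argument as written asserts $P$ is a Laurent polynomial, and then cannot perform the needed rearrangement.

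What is missing is the ingredient used in \cite[Proposition~3.5.1]{FHL} and \cite[Theorem~12.33]{DL}: (weak) commutativity in formal variables, not just associativity. Commutativity lets one reorder the three operators after multiplying by suitable powers of $(x_i-x_j)$, so that lower truncation can be applied with \emph{each} of the three variables placed leftmost. This bounds the powers of each $x_i$ from above as well as from below, which is exactly what forces the numerator to be a Laurent polynomial simultaneously in $z_1$, $z_2$, $z_3$. Iterating associativity alone can establish convergence and the rational structure in two of the variables at a time, but it cannot deliver the global Laurent polynomial numerator. Your closing remark about independence of the lift $\hat b$ is correct and benign, but it does not touch the central issue.
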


The results of this paper will apply to abelian intertwining algebras that are simple in the following sense:
\begin{defi}
 We say that an abelian intertwining algebra $V$ is \textit{simple} if the only $A$-graded subspaces closed under the actions of $v_n$ for all $v\in V$, $n\in\CC$ are $0$ and $V$.
\end{defi}

\begin{propo}\label{Valphasimple}
 If $V$ is a simple abelian intertwining algebra, then the $V^0$-modules $V^\alpha$ are simple for all $\alpha\in A$. In particular, $V^0$ is a simple vertex operator algebra.
\end{propo}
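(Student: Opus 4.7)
The plan is to adapt the classical argument for simple vertex operator (super)algebras. Let $W$ be a nonzero $V^0$-submodule of $V^\alpha$; I will show $W = V^\alpha$, which proves the first assertion. Applied with $\alpha = 0$ to an arbitrary nonzero ideal of $V^0$, the same argument will also show that $V^0$ is simple as a vertex operator algebra, so the two parts of the proposition reduce to a single statement.

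First, define $I = \bigoplus_{\beta \in A} I^\beta \subseteq V$ to be the smallest $A$-graded subspace of $V$ that contains $W$ and is stable under the operators $v_n$ for every $v \in V$ and $n \in \CC$. Since $W \subseteq I$ is nonzero, the defining property of a simple abelian intertwining algebra forces $I = V$; in particular $I \cap V^\alpha = V^\alpha$, and it remains to establish $I \cap V^\alpha \subseteq W$.

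By construction, $I$ is spanned by iterated expressions $v^{(1)}_{n_1} \cdots v^{(k)}_{n_k} w_0$ with $v^{(i)} \in V^{\beta_i}$, $n_i \in \CC$, $w_0 \in W$, and $k \geq 0$. The compatibility of $Y$ with the $A$-grading places such an iterate in $V^{\alpha + \sum_i \beta_i}$, so the vectors contributing to $I \cap V^\alpha$ are precisely those with $\sum_i \beta_i = 0$. The key step is to use the Jacobi identity repeatedly — more concretely, the iterate and braided-commutator formulas that one extracts from it by taking appropriate residues, generalizing the familiar vertex operator (super)algebra reductions to abelian intertwining algebras — to collapse any such iterated product into a finite linear combination of single-operator actions $\tilde{v}_m w_0$, where $\tilde{v}$ is some iterate of the $v^{(i)}$ under $Y$ and hence lies in $V^{\sum_i \beta_i} = V^0$. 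Because $W$ is a $V^0$-submodule, every term $\tilde{v}_m w_0$ lies in $W$, whence $I \cap V^\alpha \subseteq W$ and the proof concludes.

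The main obstacle is bookkeeping for the nontrivial cocycle scalars $F(\cdot,\cdot,\cdot)$ and $\Omega(\cdot,\cdot)$, together with the fractional powers from the shift $\hat{b}(\cdot,\cdot)$ that enter the Jacobi identity of an abelian intertwining algebra (as recorded in \ref{abintwalgassoc} and \ref{abintalgskewsym}). These prefactors have to be carried through the inductive reduction, which is cleanest when phrased at the level of formal generating functions so that the $\delta$-function identities of the Jacobi identity apply directly. Crucially, however, these prefactors are nonzero scalars and the iterate operation under $Y$ strictly respects the $A$-grading, so the structural conclusion — that every element of $I \cap V^\alpha$ can be rewritten as a $V^0$-linear combination of vectors in $W$ — is unaffected, and the argument proceeds exactly as in the vertex operator superalgebra case.
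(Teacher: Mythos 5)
Your outline and the paper's share the same skeleton: use simplicity of $V$ to show that a certain $A$-graded left ideal generated from the given data must be all of $V$, and then descend back to a statement about $V^0$-actions on $V^\alpha$. The difference is where the associativity work is placed. The paper fixes a nonzero $v\in V^\alpha$, takes the \emph{single-mode} span $\widetilde V = \mathrm{span}\{u_n v : u\in V, n\in\CC\}$, and must prove $\widetilde V$ is closed under iterated left actions of $Y$; you instead take the full $A$-graded left ideal $I$ generated by $W$, so closure is automatic, but you must then show $I\cap V^\alpha\subseteq W$ by collapsing iterated products $v^{(1)}_{n_1}\cdots v^{(k)}_{n_k}w_0$ back to sums of $\tilde v_m w_0$ with $\tilde v\in V^0$. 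Both reductions hinge on the same associativity/iterate structure, so the strategies are close cousins; but the paper realizes that structure via a graded-dual argument (citing \cite[Remark 12.31]{DL}: product and iterate are formal expansions of one rational function $f(v';x_1,x_2)$, so vanishing of all iterate pairings against $\widetilde V^\perp$ forces vanishing of all product pairings), whereas you realize it via a direct mode-level residue computation.

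The gap in your proposal is precisely that mode-level computation, which you sketch but do not carry out, and it is not the routine "bookkeeping" you describe. An analogue of the iterate formula \cite[Proposition~4.5.7]{LL} for an abelian intertwining algebra is genuinely more delicate: the Jacobi identity carries the shifted delta-function factors $\bigl(\tfrac{x_1-x_2}{x_0}\bigr)^{\hat b(\alpha_1,\alpha_2)}$, $\bigl(\tfrac{x_2-x_1}{e^{\pi i}x_0}\bigr)^{\hat b(\alpha_1,\alpha_2)}$, $\bigl(\tfrac{x_2+x_0}{x_1}\bigr)^{\hat b(\alpha_1,\alpha_3)}$ alongside the scalars $F$ and $B$, and the modes $v_n$ range over cosets $n\in -\hat b(\cdot,\cdot)+\ZZ$ rather than $\ZZ$, so the residue extractions and lower-truncation bounds of the $C=\ZZ$ case do not transcribe verbatim. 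You would need to state and prove a finite-sum iterate formula in this setting — together with enough uniformity in the truncation constants $L,M$ to run the induction on $k$ — before the "collapse" step is established. The paper avoids the issue entirely by working in the graded dual with the known rational-function expansion property of products and iterates, which is why you will not find such a formula in the text. Until that formula is supplied, your proof is an outline, not a complete argument. (Your reduction of the second claim to the first by setting $\alpha=0$ is fine, as a $V^0$-submodule of $V^0$ is the same thing as a left ideal.)
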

\begin{proof}
 Fix any non-zero $v\in V^\alpha$. It is sufficient to show that $V^\alpha=\mathrm{span}\lbrace u_n v\,\vert\,u\in V^0, n\in\ZZ\rbrace$. This will follow from showing $V=\mathrm{span}\lbrace u_n v\,\vert\,u\in V, n\in\CC\rbrace$, because then compatibility of $Y$ with the $A$-grading shows
 \begin{align*}
  V  =\mathrm{span}\lbrace u_n v\,\vert u\in V, n\in\CC\rbrace = \bigoplus_{\beta\in A} \mathrm{span}\lbrace u_n v\,\vert\,u\in V^\beta, n\in\CC\rbrace\subseteq\bigoplus_{\beta\in A} V^{\alpha+\beta} = V,
 \end{align*}
so that $V^{\alpha+\beta}$ must equal $\mathrm{span}\lbrace u_n v\,\vert\, u\in V^\beta, n\in\CC\rbrace$ for all $\beta\in A$.

Now set $\widetilde{V}=\mathrm{span}\lbrace u_n v\,\vert\,u\in V, n\in\CC\rbrace$. To show that $\widetilde{V}=V$, note that since $v$ is non-zero and $A$-homogeneous, $\widetilde{V}$ is a non-zero $A$-graded subspace of $V$. Thus since $V$ is simple, we only need to show that $\widetilde{V}$ is closed under the the left action of the vertex operator $Y$. Specifically, we need to show that for all $v_1, v_2\in V$, the coefficients of the series $Y(v_1, x_1)Y(v_2, x_2)v$ are contained in $\widetilde{V}$.

The first thing to notice is that the easy generalization of \cite[Propositions 4.5.6 and 4.5.7]{LL} to intertwining operators among modules for the vertex operator algebra $V^0$ implies that for any $\beta\in A$, $\widetilde{V}\cap V^\beta$ is a $V^0$-submodule of $V^\beta$. This means that $\widetilde{V}$ is preserved by $L(0)$, and hence $\widetilde{V}$ is $L(0)$-graded.

Now consider the graded dual $V'=\bigoplus_{\alpha\in A, n\in\CC} (V^\alpha_{(n)})^*$, and let $\langle\cdot,\cdot\rangle$ denote the natural bilinear pairing between $V'$ and $V$. Define
\begin{equation*}
 \widetilde{V}^\perp =\lbrace v'\in V'\,\vert\,\langle v', \widetilde{v}\rangle =0\,\,\mathrm{for\,all}\,\,\widetilde{v}\in\widetilde{V}\rbrace.
\end{equation*}
Since $\widetilde{V}$ is doubly graded by $A$ and by conformal weight, the same is true of $\widetilde{V}^\perp$. Then we have $(\widetilde{V}^\perp)^\perp=\widetilde{V}$ because this relation holds when restricted to each finite-dimensional subspace $V^\alpha_{(n)}$ for $\alpha\in A$ and $n\in\CC$. Thus to show that the coefficients of $Y(v_1, x_1)Y(v_2,x_2)v$ are contained in $\widetilde{V}$ for any $v_1,v_2\in V$, it is sufficient to show that the series
\begin{equation*}
 \langle v', Y(v_1,x_1)Y(v_2,x_2)v\rangle = 0
\end{equation*}
for all $v'\in\widetilde{V}^\perp$.

By linearity, it is enough to consider the case that $v_1\in V^{\alpha_1}$ and $v_2\in V^{\alpha_2}$ for some $\alpha_1,\alpha_2\in A$. Then by \cite[Remark 12.31]{DL},
\begin{equation*}
 (x_1-x_2)^{\hat{b}(\alpha_1,\alpha_2)} x_1^{\hat{b}(\alpha_1,\alpha)} x_2^{\hat{b}(\alpha_2,\alpha)}\langle v', Y(v_1,x_1)Y(v_2,x_2)v\rangle
\end{equation*}
is the expansion (in non-negative powers of $x_2$) of a rational function $f(v'; x_1,x_2)$ on $\CC^2$ with possible poles only at $x_1=0$, $x_2=0$, and $x_1=x_2$. Moreover,
\begin{equation*}
 F(\alpha_1,\alpha_2,\alpha)x_0^{\hat{b}(\alpha_1,\alpha_2)} (x_2+x_0)^{\hat{b}(\alpha_1,\alpha)} x_2^{\hat{b}(\alpha_2,\alpha)}\langle v', Y(Y(v_1,x_0)v_2,x_2)v\rangle
\end{equation*}
is the expansion of $f(v'; x_2+x_0, x_2)$ in the direction of non-negative powers of $x_0$. But if $v'\in\widetilde{V}^\perp$, it is clear that the series expansion of $f(v'; x_2+x_0, x_2)$ equals zero, so the same is true of $f(v'; x_1, x_2)$. It follows that $\langle v', Y(v_1,x_1)Y(v_2,x_2)v\rangle=0$ whenever $v'\in\widetilde{V}^\perp$, completing the proof.
\end{proof}

Now we define automorphisms for an abelian intertwining algebra:
\begin{defi}
 An \textit{automorphism} of an abelian intertwining algebra $V$ associated to an abelian group $A$ with normalized $3$-cocycle $(F,\Omega)$ is a linear automorphism $g$ of $V$ which satisfies:
\begin{enumerate}
 \item For any $\alpha\in A$, $g\cdot V^\alpha=V^\alpha$.
 \item For any $v_1, v_2\in V$, $g\cdot Y(v_1,x)v_2=Y(g\cdot v_1, x)g\cdot v_2.$
 \item $g\cdot\vac=\vac$ and $g\cdot\omega =\omega$.
\end{enumerate}
\end{defi}

\begin{rema}
 We require an automorphism $g$ of $V$ to preserve the $A$-grading, and in fact $g$ preserves the conformal weight grading as well because $g\cdot\omega=\omega$ implies $g$ commutes with $L(0)$.
\end{rema}

\begin{exam}
 Suppose $\chi: A\rightarrow\CC^\times$ is a group homomorphism. Then compatibility of $Y$ with the $A$-gradation implies that $\chi$ determines an automorphism of any abelian intertwining algebra associated to $A$ via
 \begin{equation*}
  \chi\cdot v=\chi(\alpha) v
 \end{equation*}
for $v\in V^\alpha$, any $\alpha\in A$. In particular, viewing $A$ as discrete, the unitary dual $\widehat{A}$ of homomorphisms $A\rightarrow U(1)$ is a compact topological group acting faithfully as automorphisms of $V$.
\end{exam}

If $G$ is a group of automorphisms of an abelian intertwining algebra $V$, we use $V^G$ to denote the fixed points of $G$, that is, the subspace of all $v\in V$ such that $g\cdot v=v$ for all $g\in G$. It is clear that $V^G\cap V^0$ is a vertex operator subalgebra of $V^0$. Note also that $V^G\cap V^0=V^{\langle G, \widehat{A}\rangle}$. Since we will always want $V^G$ to be a vertex operator algebra, we will generally assume $\widehat{A}\subseteq G$. 
Note that since automorphisms of $V$ preserve the $A$-grading, $\widehat{A}$ will be central in any automorphism group $G$ that contains it.

\section{From \texorpdfstring{$G$}{G}-modules to \texorpdfstring{$V^G$}{VG}-modules}

In this section and the next, $V$ will be a simple abelian intertwining algebra and $G$ will be some compact group of automorphisms of $V$. Our goal in this section is to introduce a functor from $\rep G$ to the category of $V^G$-modules that are contained in $V$. The appropriate functor to use was constructed in \cite{KirillovOrbifoldI}; to show that it induces an equivalence of abelian categories, we need a Schur-Weyl-type decomposition of $V$ as a $G\times V^G$-module. Such a decomposition was obtained in \cite{DLM} in the case that $V$ is a vertex operator algebra, so we just need to show that essentially the same proof there carries over to the abelian intertwining alebra case. First, we establish the setting for this section and the next:
\begin{assum}\label{assum1}
We will always work under the following assumptions:
\begin{itemize}
 \item $A$ is a countable abelian group with normalized $3$-cocycle $(F,\Omega)$ on $A$ with values in $\mathbb{C}^\times$.
 
 \item $V$ is a simple abelian intertwining algebra associated to $A$ and $(F,\Omega)$.
 
 \item $G$ is a compact Lie group of automorphisms of $V$ containing $\widehat{A}$. (Note that $G$ could be finite if $A$ is.)
 
 \item The action of $G$ on $V$ is continuous in the sense that for any $\alpha\in A$, $n\in\CC$, the action of $G$ on the finite-dimensional vector space $V^\alpha_{(n)}$ is continuous with respect to the standard Euclidean topology on $V^\alpha_{(n)}$ (in particular, this topology may be given by any norm $\Vert \cdot\Vert$).
\end{itemize}
\end{assum}

\subsection{Schur-Weyl duality for abelian intertwining algebras}\label{sec:SW-duality}

In \cite{DLM}, it was proved that if $V$ is a simple vertex operator algebra and $G$ is a compact Lie group of automorphisms acting continuously on $V$, then as a $G\times V^G$ module,
\begin{equation*}
 V=\bigoplus_{\chi\in\widehat{G}} M_\chi\otimes V_\chi
\end{equation*}
where the sum runs over all irreducible finite-dimensional continuous characters of $G$, $M_\chi$ is the irreducible $G$-module corresponding to $\chi$, and the $V_\chi$ are (non-zero) distinct irreducible $V^G$-modules. Here, we show that the arguments in \cite{DLM} generalize to the case that $V$ is a suitable abelian intertwining algebra:

\begin{theo}
 Suppose $V$ is a simple abelian intertwining algebra associated to a countable abelian group $A$ with $3$-cocycle $(F,\Omega)$, and suppose $G$ is a compact Lie group of continuous automorphisms of $V$ containing $\widehat{A}$. Then as a $G\times V^G$-module,
 \begin{equation*}
  V=\bigoplus_{\chi\in\widehat{G}} M_\chi\otimes V_\chi,
 \end{equation*}
where the sum runs over all irreducible finite-dimensional characters of $G$, $M_\chi$ is the irreducible $G$-module corresponding to $\chi$, and the $V_\chi$ are non-zero distinct irreducible $V^G$-modules.
\end{theo}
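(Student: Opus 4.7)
The plan is to adapt the proof of \cite[Theorem 4.4]{DLM} to the abelian intertwining algebra setting, using the structural results already established in this section. Continuity of the $G$-action on each finite-dimensional $V^\alpha_{(n)}$ together with compactness of $G$ gives a Peter-Weyl decomposition of each graded piece into continuous finite-dimensional irreducible $G$-modules. Summing yields a $G$-module decomposition $V=\bigoplus_\chi M_\chi\otimes V_\chi$ with multiplicity spaces $V_\chi:=\hom_G(M_\chi,V)$, the sum indexed by those $\chi$ with $V_\chi\neq 0$. Since $\widehat{A}\subseteq G$ is central, it acts on each irreducible $M_\chi$ through evaluation at some $\alpha(\chi)\in A$, forcing $V_\chi\subseteq V^{\alpha(\chi)}$ and giving the inherited grading restriction. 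Every element of $V^G$ is $G$-invariant, so its modes commute with $G$ and preserve each isotypic component $M_\chi\otimes V_\chi$ while acting trivially on the $M_\chi$-factor, inducing a $V^G$-module structure on $V_\chi$ whose module axioms transfer directly from the abelian intertwining algebra axioms of $V$.

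For irreducibility, suppose $W\subseteq V_\chi$ is a non-zero $V^G$-submodule. Mirroring the argument in the proof of Proposition~\ref{Valphasimple}, I consider the $A$-graded subspace $U\subseteq V$ spanned by $\{v_n u : v\in V,\, n\in\CC,\, u\in M_\chi\otimes W\}$. The associativity property of $Y$ (Proposition~\ref{abintwalgassoc}) together with the graded-dual argument used in Proposition~\ref{Valphasimple} shows that $U$ is preserved under all modes of $V$; since $V$ is simple and $U$ is a non-zero $A$-graded subspace closed under every $v_n$, we conclude $U=V$. Projecting this equality onto the $\chi$-isotypic component $M_\chi\otimes V_\chi$ via the $G$-averaging operator and then extracting the second tensor factor yields $V_\chi=W$: the projector commutes with the $V^G$-action, which acts only on the second tensor factor of each isotypic summand, so the image of the projection is $V^G$-generated by $W$ inside itself.

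For distinctness and the main obstacle: if $V_\chi\cong V_{\chi'}$ as $V^G$-modules for $\chi\neq\chi'$, a Jacobson-density argument applied to the action of $V^G$ on $V$ (with the $G$-action playing the role of the commutant and \hom-spaces between distinct isotypic components vanishing by Schur's lemma) produces a $V^G$-linear endomorphism of $V$ exchanging inequivalent $G$-isotypic components, contradicting either simplicity of $V$ or the irreducibility of $M_\chi,M_{\chi'}$; this is precisely the argument of \cite{DLM} and transfers verbatim. The main obstacle throughout is verifying that the cocycle corrections in the associativity and skew-symmetry of $Y$ (Propositions~\ref{abintwalgassoc} and \ref{abintalgskewsym}) do not obstruct the density-style reasoning underlying both irreducibility and distinctness. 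Because $(F,\Omega)$ takes values in $\CC^\times$, these corrections appear only as non-zero scalar multipliers in the relevant identities, so they leave invariant the spans, submodules, and commutant structures on which the DLM argument depends; the proof therefore goes through with only bookkeeping changes.
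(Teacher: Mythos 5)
Your proposal does not match the paper's treatment and leaves several genuine gaps; the statement is not a near-verbatim transcription of \cite{DLM} to the new setting.

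\textbf{Non-vanishing of $V_\chi$.} You write that the sum is ``indexed by those $\chi$ with $V_\chi\neq 0$,'' but the theorem asserts that $V_\chi$ is non-zero for \emph{every} $\chi\in\widehat{G}$. This is a substantial part of the theorem's content and it is not addressed at all. The paper proves it via DLM's three facts: (a) $V$ contains a faithful finite-dimensional $G$-submodule, (b) $V^\chi\neq 0$, $V^\psi\neq 0$ implies $V^\rho\neq 0$ whenever $M_\rho\subseteq M_\chi\otimes M_\psi$, and (c) $V^\chi\neq 0$ implies $V^{\chi^*}\neq 0$. Fact (a) requires an exhaustion of $V$ by finite-dimensional $G$-invariant subspaces; in the abelian intertwining algebra setting the naive choice $\bigoplus_{n\le N}V_{(n)}$ may be infinite-dimensional because $A$ may be infinite, so the paper appeals to countability of $A$ to build a suitable exhaustion $V_N=\bigoplus_{i=0}^N\bigoplus_{\mathrm{Re}\,n\le N}V^{\alpha_i}_{(n)}$. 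Fact (b) needs a generalization of \cite[Lemma~3.1]{DM1} to intertwining operators (Lemma~\ref{DMgen}). None of this appears in your write-up, and it is not a ``bookkeeping change.''

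\textbf{Irreducibility.} Your projection argument has a gap. Granting that $U=\mathrm{span}\{v_n u:v\in V,\,u\in M_\chi\otimes W\}$ equals $V$, projecting onto the $\chi$-isotypic component gives $P_\chi(U)=M_\chi\otimes V_\chi$, but the step ``the image of the projection is $V^G$-generated by $W$'' does not follow. The projector $P_\chi$ commutes with $V^G$-modes but not with general $v_n$ for $v\in V$; there is no a priori reason why $P_\chi(v_n u)$ should lie in the $V^G$-module generated by $M_\chi\otimes W$. Establishing that it does is precisely the content of the Howe-duality density argument that DLM use and that the paper verifies explicitly: one must show that for appropriate finite-dimensional truncations $W$ (here $\bigoplus_{\mathrm{Re}\,n\le N}V^{\alpha}_{(n)}$ or the sum of two such), every $G$-endomorphism of $W$ is a linear combination of truncated modes of elements of $V^G$. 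The paper does this by introducing the algebra $E=\mathrm{span}\{P_W v_n P_{\le m}\}$, proving it is an algebra (via \cite[Proposition~4.5.7]{LL} and explicit manipulation), running a density argument using irreducibility of the $V^\alpha$ as $V^0$-modules, and then passing to the $G$-isotypic component of $E$. Your proposal skips this entirely.

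\textbf{Distinctness.} The claim that a $V^G$-isomorphism $V_\chi\cong V_{\chi'}$ ``contradicts simplicity of $V$ or the irreducibility of $M_\chi,M_{\chi'}$'' is not correct as stated: a $V^G$-endomorphism of $V$ swapping two isotypic components contradicts neither. The actual argument (both in \cite{DLM} and in the paper) uses the same density result as irreducibility, applied to the two-component truncations $W=\bigoplus_{\mathrm{Re}\,n\le N}(V^{\alpha_1}_{(n)}\oplus V^{\alpha_2}_{(n)})$, to constrain the commutant. Invoking ``Jacobson density'' without establishing which algebra acts densely on what is the same gap as in the irreducibility step.

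\textbf{On the cocycle corrections.} Your high-level observation that $(F,\Omega)$ only introduces non-zero scalars and hence preserves spans and submodules is reasonable, but it is not where the real adaptations lie. The substantive changes from \cite{DLM} are (i) the exhaustion by finite-dimensional $G$-submodules when $A$ is infinite, (ii) restricting to single $A$-graded pieces $V^\alpha$ where $Y|_{V^{\alpha_1}\otimes V^{\alpha_2}}$ is a genuine $V^0$-intertwining operator so that \cite[Proposition~4.5.7]{LL} applies, and (iii) generalizing \cite[Lemma~3.1]{DM1} to intertwining operators. Your proof does not engage with any of these.
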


\begin{proof}
In the abelian intertwining algebra setting, the decomposition of $V$ into a sum of tensor product modules works exactly as in \cite{DLM}: First, since $G$ acts continously on each finite-dimensional vector space $V^\alpha_{(n)}$, $V$ decomposes as the direct sum of finite-dimensional irreducible (continuous) $G$-modules. Thus for each $\chi\in\widehat{G}$, we set $V^\chi$ to be the sum of all $G$-submodules in $V$ that are isomorphic to $M_\chi$. So $V=\bigoplus_{\chi\in\widehat{G}} V^\chi$, although \textit{a priori} some $V^\chi$ could be zero.  Note that since $\widehat{A}\subseteq G$ and the $V^\alpha$ are inequivalent $\widehat{A}$-modules, for any $\chi$ we have $V^\chi\subseteq V^\alpha$ for some single $\alpha\in A$. Then, the $V^G$-submodule $V_\chi$ and the isomorphism $V^\chi\cong M_\chi\otimes V_\chi$ are obtained by taking $V_\chi$ to be the space of ``highest weight vectors'' in $V^\chi$ with respect to a certain (non-canonical) upper-triangular subalgebra of the group algebra $L^1(G)$. 

We still need to check that the proofs of the following assertions from \cite{DLM} carry through in our setting:
\begin{enumerate}
 \item $V^\chi$ is non-zero for any $\chi\in\widehat{G}$. Here, the proof in \cite{DLM} relies on the following facts:
 \begin{enumerate}
 \item There is a finite-dimensional faithful $G$-submodule contained in $V$.
 \item If $V^\chi$ and $V^\psi$ are non-zero, so is $V^\rho$ whenever $M_\rho$ is a submodule of $M_\chi\otimes M_\psi$.
 \item If $V^\chi\neq 0$, so is $V^{\chi^*}$ where $\chi^*$ is the dual character to $\chi$.
\end{enumerate}

\item The $V_\chi$ are distinct irreducible $V^G$-modules.
\end{enumerate}

The proof of 1(a) in \cite{DLM} uses only the compactness and finite-dimensionality of the Lie group $G$, as well as on the existence of an increasing sequence of finite-dimensional $G$-invariant subspaces of $V$ that exhaust $V$. Unlike in \cite{DLM}, in the abelian intertwining algebra setting we cannot necessarily take the sequence of subspaces  $\lbrace V_N\rbrace$ where $V_N=\bigoplus_{n\leq N} V_{(n)}$, since this subspace might not be finite dimensional. However, since we are assuming that $A$ is countable, we can enumerate $A$ as $\lbrace\alpha_0, \alpha_1,\alpha_2,\ldots\rbrace$ and then take
\begin{equation*}
 V_N =\bigoplus_{i=0}^N \bigoplus_{\mathrm{Re}\,n\leq N} V^{\alpha_i}_{(n)},
\end{equation*}
which does give a sequence of finite-dimensional $G$-submodules of $V$ that exhausts $V$, by our assumptions on the grading of an abelian intertwining algebra. The proof of point 1(c) in \cite{DLM} goes through unchanged except that we need to use the generalization of \cite[Proposition 2.1]{DLM}, obtained here in the proof of Proposition \ref{Valphasimple}, that $V=\mathrm{span}\lbrace u_n v\,\vert\,u\in V, n\in\CC\rbrace$ for any $A$-homogeneous $v\in V$.

Now the proof of point 1(b) in \cite{DLM} relies on \cite[Lemma 3.1]{DM1}. Here we state this lemma in the somewhat generalized form needed for the abelian intertwining algebra setting:
\begin{lemma}
If $\lbrace v_1^{(i)}\rbrace_{i=1}^n$ is a set of non-zero vectors in $V^{\alpha_1}$ and $\lbrace v_2^{(i)}\rbrace_{i=1}^n$ is a set of linearly independent vectors in $V^{\alpha_2}$ for some $\alpha_1,\alpha_2\in A$, then
\begin{equation*}
 \sum_{i=1}^n Y(v_1^{(i)}, x)v_2^{(i)} \neq 0.
\end{equation*}
\end{lemma}
In light of Proposition \ref{Valphasimple}, this lemma (as well as the original \cite[Lemma 3.1]{DM1}), is a corollary of the following:
\begin{lemma}\label{DMgen}
 Suppose $W_1$ and $W_2$ are irreducible modules for a vertex operator algebra algebra $V$, $W_3$ is a third $V$-module, and $\mathcal{Y}$ is an intertwining operator of type $\binom{W_3}{W_1\,W_2}$. If $\lbrace w_1^{(i)}\rbrace_{i=1}^n$ is a set of non-zero vectors in $W_1$ and $\lbrace w_2^{(i)}\rbrace_{i=1}^n$ is a set of linearly independent vectors in $W_2$, then
 \begin{equation*}
  \sum_{i=1}^n \mathcal{Y}(w_1^{(i)}, x)w_2^{(i)} \neq 0.
 \end{equation*}
\end{lemma}
The proof of this lemma is the same as the proof of \cite[Lemma 3.1]{DM1}: one uses commutativity for the intertwining operator $\cY$ to show that if $\sum_{i=1}^n \mathcal{Y}(w_1^{(i)}, x)w_2^{(i)} =0$, then also $\sum_{i=1}^n \mathcal{Y}(w_1^{(i)}, x)aw_2^{(i)}=0$, where $a$ is any linear combination of products of the form $v^{(1)}_{n_1}\cdots v^{(k)}_{n_k}$ for $v^{(1)},\ldots, v^{(k)}\in V$ and $n_1,\ldots,n_k\in\ZZ$. Then a density theorem argument using the irreducibility of $W_2$ and the linear independence of the $w_2^{(i)}$ shows that $a$ can be chosen so that $a w^{(i)}_2 =\delta_{i,j} w^{(i)}_2$ for any particular $1\leq j\leq n$. Thus, $\mathcal{Y}(w_1^{(j)}, x)w^{(j)}_2 =0$, and \cite[Proposition 11.9]{DL} implies $w_1^{(j)}$ must be zero for all $j$. This completes the verification that the proof in \cite{DLM} extends to our abelian intertwining algebra setting to show that $V^\chi\neq 0$ for every $\chi\in\widehat{G}$.

Now to verify that the $V_\chi$ remain distinct irreducible modules in the abelian intertwining algebra setting: the proof in \cite{DLM} is based on Howe's theory of dual pairs (see for instance \cite{Ho}) and essentially requires components of vertex operators to fill up the endomorphism ring of finite-dimensional conformal-weight-graded $G$-submodules of $V$. To apply the proof in the abelian intertwining algebra setting, we need to verify the following assertion:

\begin{itemize} \item Suppose $f$ is a $G$-endomorphism of the finite-dimensional $G$-module  $W$ where $W$ is either $\bigoplus_{\mathrm{Re}\,n\leq N} V^\alpha_{(n)}$ for some $\alpha\in A$ or $\bigoplus_{\mathrm{Re}\,n\leq N} (V^{\alpha_1}_{(n)}\oplus V^{\alpha_2}_{(n)})$ for $\alpha_1,\alpha_2\in A$, where $N$ is some fixed integer. Then for any $n\in\CC$ with $\mathrm{Re}\,n\leq N$,
\begin{equation*}
 f\vert_{W_{(n)}} =\sum_i v^{(i)}_{n_i}\vert_{W_{(n)}}
\end{equation*}
for some $v^{(i)}\in V^G$ and $n_i\in\ZZ$ (which could depend on $n$ as well as on $f$).
\end{itemize}

It is enough to verify the above assertion for a $G$-endomorphism $f$ of $W=\bigoplus_{\mathrm{Re}\,n\leq N} (V^{\alpha_1}_{(n)}\oplus V^{\alpha_2}_{(n)})$. The argument is basically the same as for \cite[Lemma 2.2]{DLM}, but we provide a few more details.  For $m\in\ZZ$ let $P_{\leq m}$ denote the projection from $W$ to $\bigoplus_{\mathrm{Re}\,n\leq m} W_{(n)}$; this is a $G$-module endomorphism of $W$ because $G$ preserves the conformal weight spaces of $W$. Also let $P_W$ denote the projection from $V$ onto $W$ with respect to the conformal weight and $A$-gradings, also a $G$-module homomorphism.

Now we claim that
\begin{equation*}
 E=\mathrm{span}\lbrace P_W v_n P_{\leq m}\,\vert\,v\in V^0, n,m\in\ZZ\rbrace
\end{equation*}
is a subalgebra of $\Endo W$. To verify this claim, first note that $E$ contains $1_W$ because this equals $P_W \vac_{-1} P_{\leq N}$. Next, we need to show that
\begin{equation*}
 (P_W v_n P_{\leq m})(P_W v'_{n'} P_{\leq m'})\in E 
\end{equation*}
for $v, v'\in V^0$, $m,n,m',n'\in\ZZ$. We may assume that $v, v'$ are homogeneous with respect to the conformal weight gradation of $V^0$. In this case, for homogeneous $w\in W$,
\begin{align*}
 P_{\leq m} P_W v'_{n'} w & =\left\lbrace\begin{array}{cl}
                                        v'_{n'} w  & \mathrm{if}\,\, \mathrm{Re}\,\mathrm{wt}\,w\leq \mathrm{min}(N+n'+1-\mathrm{wt}\,v', m+n'+1-\mathrm{wt}\,v') \\
                                        0 & \mathrm{otherwise} \\
                                       \end{array}\right. \nonumber\\
                                       & = v'_{n'} P_{\leq M(v', n', m)} w
\end{align*}
where $M(v',n',m)=\mathrm{min}(N+n'+1-\mathrm{wt}\,v', m+n'+1-\mathrm{wt}\,v')$. Thus
\begin{equation*}
 (P_W v_n P_{\leq m})(P_W v'_{n'} P_{m'})=P_W v_n v'_{n'} P_{\leq\mathrm{min}(m',M(v',n',m))}.
\end{equation*}
Now by \cite[Proposition 4.5.7]{LL}, we have, for any $w\in V$,
\begin{equation}\label{algproof}
 v_n v'_{n'} w =\sum_{i=0}^{M}\sum_{j=0}^L \binom{n-L}{i}\binom{L}{j} (v_{n-L-i+j} v')_{n'+L+i-j} w
\end{equation}
where $L$ and $M$ are nonnegative integers chosen such that $v_k w =0$ for $k\geq L$ and $v'_k w =0$ for $k>M+n'$. Now if we choose $L$ and $M$ large enough so that these relations hold when $w$ is any vector of a (finite) basis for $W$, then \eqref{algproof} holds for any $w\in W$. Thus we get
\begin{equation*}
 (P_W v_n P_{\leq m})(P_W v'_{n'} P_{\leq m'}) =\sum_{i=0}^M\sum_{j=0}^L \binom{n-L}{i}\binom{L}{j} P_W(v_{n-L-i+j} v')_{n'+L+i-j} P_{\leq\mathrm{min}(m',M(v',n',m))}\in E,
\end{equation*}
completing the proof that $E$ is an algebra in $\Endo W$.

Now we observe that as an $E$-module, $W=(W\cap V^{\alpha_1})\oplus(W\cap V^{\alpha_2})$, and by applying \cite[Proposition 4.5.6]{LL} and the fact that each $V^\alpha$ is an irreducible $V^0$-module, we see that these two summands of $W$ are irreducible $E$-modules. Since $W$ is a completely reducible $E$-module, a density theorem argument shows that $E=\Endo (W\cap V^{\alpha_1})\oplus\Endo (W\cap V^{\alpha_2})$. Since $G$ preserves the $A$-grading of $V$ as well as $W$, $G$ acts on this subalgebra of $\Endo W$ by conjugation, so $E$ decomposes as a direct sum of irreducible $G$-modules. 

For any irreducible character $\chi$ of $G$, the sum of all $G$-submodules in $E$ isomorphic to $M_\chi$ is
\begin{equation*}
E^\chi=\mathrm{span}\lbrace P_W v_n P_{\leq m}\,\vert\, v\in V^0\cap V^\chi, m,n\in\ZZ\rbrace;
\end{equation*}
this is because $g v_n g^{-1}=(g\cdot v)_n$ and because $P_W$ and $P_{\leq m}$ are $G$-module homomorphisms. In particular, if $f$ is an $A$-grading-preserving $G$-endomorphism of $W$, $f$ is a linear combination
\begin{equation*}
 f=\sum_i P_W v^{(i)}_{n_i} P_{\leq m_i}
\end{equation*}
where $v^{(i)}\in V^G$ are homogeneous and $m_i,n_i\in\ZZ$. So if $\mathrm{Re}\,n\leq N$, we have
\begin{equation*}
 f\vert_{W_{(n)}}={\sum_i}' v^{(i)}_{n_i}\vert_{W_{(n)}},
\end{equation*}
where the sum is restricted to $i$ such that $m_i\geq \mathrm{Re}\,n$ and $\mathrm{wt}\,v^{(i)}+\mathrm{Re}\,n-n_i-1\leq N$. Note that if $f\vert_{W_{(n)}}\neq 0$, this restricted sum cannot be empty. Finally, we simply note that any $G$-endomorphism $f$ of $W$ automatically preserves the $A$-grading because such an endomorphism commutes with $\widehat{A}\subseteq G$.
\end{proof}

\subsection{The functor}\label{sec:Phi}

We continue to work in the setting of Assumption \ref{assum1}. Here, we discuss a functor $\Phi$ introduced in \cite{KirillovOrbifoldI} from $\rep G$ to the (semisimple) abelian category $\cC_V$ of $V^G$-modules generated by the $V_\chi$, $\chi\in\widehat{G}$, appearing in the decomposition of $V$ as a $G\times V^G$-module. In \cite[Theorem 2.11]{KirillovOrbifoldI}, it was shown that when $G$ is finite and $\cC_V$ is contained in a rigid, semisimple, braided tensor category of $V^G$-modules, $\Phi$ is a braided tensor equivalence. However, rigidity is generally rather difficult to establish for a category of vertex operator algebra modules. On the other hand, if $G$ is finite abelian, $V$ is a vertex operator algebra, and the $V_\chi$ are contained in a braided tensor category of $V^G$-modules, the rigidity of the modules $V_\chi$ was proven in \cite{Miy}, \cite{CarM} (and see \cite[Appendix A]{CKLR} for the compact abelian case). We will settle the question of when $\Phi$ is a braided tensor equivalence in the next section, where we show that for general compact $G$ and an abelian intertwining algebra $V$, $\Phi$ is a braided tensor equivalence from $\rep_{A,F,\Omega} G$ to $\cC_V$ exactly when $\cC_V$ is contained in some vertex (and thus also braided) tensor category of $V^G$-modules.

We now recall the functor $\Phi: \rep\,G\rightarrow\cC_V$ constructed in \cite{KirillovOrbifoldI}: Given a finite-dimensional continuous $G$-module $M$, $M\otimes V$ is a $V^G$-module (not in $\cC_V$ unless $G$ is finite) with vertex operator $1_M\otimes Y$, as well as a $G$-module with the tensor product $G$-module structure. Then the $G$-fixed points of $M\otimes V$ is another $V^G$-module. Note that since as a $G\times V^G$-module
\begin{equation*}
 M\otimes V=\bigoplus_{\chi\in\widehat{G}} (M\otimes M_\chi)\otimes V_\chi,
\end{equation*}
$(M\otimes V)^G$ contains (finitely many) copies of $V_\chi$ only if $M_\chi^*$ is contained in $M$. Thus $(M\otimes V)^G$ is an object of $\cC_V$. Thus we can define the functor $\Phi$ by:
\begin{itemize}
 \item For any module $M$ in $\rep\,G$, $\Phi(M)=(M\otimes V)^G$.
 
 \item For any morphism $f: M_1\rightarrow M_2$ of finite-dimensional $G$-modules, $\Phi(f)= (f\otimes 1_V)\vert_{(M_1\otimes V)^G}$.
\end{itemize}
Note that the image of $\Phi(f)$ indeed lies in the $G$-fixed points of $M_2\otimes V$, and that $\Phi(f)$ is also a homomorphism of $V^G$-modules.

\begin{propo}\label{PhiAbEquiv}
 For each irreducible character $\chi\in\widehat{G}$, there is a $V^G$-module isomorphism $\varphi_\chi: V_{\chi}\rightarrow\Phi(M_\chi^*)$ given by
 \begin{equation*}
  \varphi_\chi(v_\chi)=\sum_i m_{\chi, i}'\otimes(m_{\chi, i}\otimes v_\chi)
 \end{equation*}
for $v_\chi\in V_\chi$, where $\lbrace m_{\chi, i}\rbrace$ is a basis for $M_\chi$ and $\lbrace m_{\chi, i}'\rbrace$ is the corresponding dual basis of $M_\chi^*$. Moreover, $\Phi$ is an equivalence of abelian categories between $\rep\,G$ and $\cC_V$.
\end{propo}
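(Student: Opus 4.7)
My plan is to prove the isomorphism $\varphi_\chi$ first, then use it to conclude that $\Phi$ is an equivalence of abelian categories.

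For $\varphi_\chi$, I would first verify that $\varphi_\chi(v_\chi)$ lies in $(M_\chi^*\otimes V)^G$: this follows from the fact that $\sum_i m'_{\chi,i}\otimes m_{\chi,i}$ is the canonical $G$-invariant of $M_\chi^*\otimes M_\chi$ (independent of the chosen basis), combined with the observation that in the Schur-Weyl decomposition the $G$-action on the isotypic component $M_\chi\otimes V_\chi\subseteq V$ is trivial on the $V_\chi$-factor. The same Schur-Weyl decomposition shows that the $V^G$-action on $M_\chi\otimes V_\chi$ is trivial on the $M_\chi$-factor, from which $V^G$-linearity $u_n\varphi_\chi(v_\chi)=\varphi_\chi(u_n v_\chi)$ is immediate. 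For bijectivity, I would compute
\begin{equation*}
(M_\chi^*\otimes V)^G=\bigoplus_{\psi\in\widehat{G}}(M_\chi^*\otimes M_\psi)^G\otimes V_\psi=(M_\chi^*\otimes M_\chi)^G\otimes V_\chi
\end{equation*}
using Schur's lemma, and note that $(M_\chi^*\otimes M_\chi)^G$ is one-dimensional, spanned precisely by $\sum_i m'_{\chi,i}\otimes m_{\chi,i}$; thus $\varphi_\chi$ is an isomorphism.

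Extending this computation to general $M$ in $\rep\,G$ by decomposing $M$ into its $G$-isotypic components yields a natural $V^G$-module isomorphism
\begin{equation*}
\Phi(M)\cong\bigoplus_{\chi\in\widehat{G}}\hom_G(M_\chi^*,M)\otimes V_\chi,
\end{equation*}
under which $\Phi(f)$ for a morphism $f:M_1\to M_2$ in $\rep\,G$ becomes $\bigoplus_\chi(f_*\otimes 1_{V_\chi})$ with $f_*$ given by postcomposition with $f$. From this description, essential surjectivity of $\Phi$ is immediate (every object of $\cC_V$ is a finite direct sum of $V_\chi$'s, and $V_\chi\cong\Phi(M_\chi^*)$), and faithfulness follows because $V_\chi\neq 0$ forces $f_*=0$ on each multiplicity space whenever $\Phi(f)=0$.

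The main remaining step is fullness, which I expect to be the technical heart of the argument: I need every $V^G$-morphism $\widetilde f:\Phi(M_1)\to\Phi(M_2)$ to have block form $\bigoplus_\chi(h_\chi\otimes 1_{V_\chi})$ relative to the isotypic decomposition, so that the $h_\chi$ can be assembled into a $G$-map $f:M_1\to M_2$ with $\Phi(f)=\widetilde f$. This reduces to showing $\Endo_{V^G}(V_\chi)=\CC$. I would argue this from simplicity of $V_\chi$ combined with grading restriction: any nonzero endomorphism $f$ of $V_\chi$ must be invertible by simplicity, commutes with $L(0)$ and so preserves the finite-dimensional lowest conformal weight subspace, and on that subspace admits an eigenvalue $\lambda\in\CC$; then $f-\lambda\cdot\id_{V_\chi}$ has nontrivial kernel and hence vanishes by simplicity, so $f=\lambda\cdot\id_{V_\chi}$ as required.
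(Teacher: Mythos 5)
Your argument is correct and follows essentially the same route as the paper: establish that $\varphi_\chi$ is an isomorphism via the one-dimensionality of $(M_\chi^*\otimes M_\chi)^G$ (and vanishing of $(M_\chi^*\otimes M_\psi)^G$ for $\psi\neq\chi$), then conclude the categorical equivalence from irreducibility of the $V_\chi$ and semisimplicity of both categories. The paper is much terser, declaring injectivity of $\varphi_\chi$ ``evident'' and simply stating that $\Phi$ induces isomorphisms on hom spaces between irreducibles, whereas you spell out the multiplicity-space decomposition $\Phi(M)\cong\bigoplus_\chi\hom_G(M_\chi^*,M)\otimes V_\chi$ explicitly, which is a helpful elaboration of the same idea.

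One minor imprecision in your fullness step: you assert it ``reduces to showing $\Endo_{V^G}(V_\chi)=\CC$,'' but to obtain the block-diagonal form you also need $\hom_{V^G}(V_\chi,V_\psi)=0$ for $\chi\neq\psi$. This holds because the $V_\chi$ are irreducible and pairwise non-isomorphic, which is part of the conclusion of the Schur--Weyl duality theorem proved earlier; you should invoke that distinctness explicitly rather than leave it implicit.
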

\begin{proof}
 It is evident that the indicated linear map $\varphi_\chi$ is an injective $V^G$-module homomorphism from $V_\chi$ to $(M_\chi^*\otimes V)^G$. It is also surjective because $M_\chi^*\otimes M_\psi$ contains a (necessarily one-dimensional) non-zero $G$-invariant subspace if and only if $\psi=\chi$.
 
 For the equivalence of abelian categories, we have just shown that every irreducible object $V_\chi$ of $\cC_V$ is isomorphic to some $\Phi(M)$ where $M=M_\chi^*$ is irreducible. Also, since the $V_\chi$ are irreducible and distinct, $\Phi$ induces isomorphisms on spaces of morphisms between irreducible modules. Now it follows that $\Phi$ is an equivalence because both $\rep G$ and $\cC_V$ are semisimple.
\end{proof}

To address the question of whether $\Phi$ gives an equivalence of tensor categories, we will need the natural transformation $J$ of \cite{KirillovOrbifoldI} to relate tensor products in $\cC_V$ to tensor products in $\rep_{A,F,\Omega}\,G$. In fact, even if the $V_\chi$ are not necessarily contained in a braided tensor category of $V^G$-modules, we can express $J$ as a linear map between spaces of intertwining operators. Given modules $W_1$, $W_2$, and $W_3$ for a vertex operator algebra, we use $\mathcal{V}^{W_3}_{W_1\,W_2}$ to denote the vector space of intertwining operators of type $\binom{W_3}{W_1\,W_2}$. 

For finite-dimensional continuous $G$-modules $M_1$, $M_2$, and $M_3$, we define
\begin{equation}\label{Jdef}
 J^{M_3}_{M_1, M_2}: \hom_G(M_1\otimes M_2, M_3)\rightarrow\mathcal{V}^{\Phi(M_3)}_{\Phi(M_1)\,\Phi(M_2)}
\end{equation}
as follows: Given a $G$-module homomorphism $f: M_1\otimes M_2\rightarrow M_3$, we have an intertwining operator $\mathcal{Y}_f$ of type $\binom{M_3\otimes V}{M_1\otimes V\,M_2\otimes V}$ given by
\begin{equation*}
 \cY_f(m_1\otimes v_1, x)(m_2\otimes v_2) = f(m_1\otimes m_2)\otimes Y(v_1,x)v_2
\end{equation*}
for $m_1\in M_1$, $m_2\in M_2$, and $v_1,v_2\in V$. Since $f$ and the components of $Y$ are $G$-module homomorphisms, the restriction of $\mathcal{Y}_f$ to $G$-fixed points of $(M_1\otimes V)\otimes(M_2\otimes V)$ maps into $G$-fixed points of $(M_3\otimes V)((x))$. Hence
\begin{equation*}
 J^{M_3}_{M_1,M_2}(f)=\mathcal{Y}_f\vert_{\Phi(M_1)\otimes\Phi(M_2)}
\end{equation*}
is a $V^G$-intertwining operator of type $\binom{\Phi(M_3)}{\Phi(M_1)\,\Phi(M_2)}$.

We do not need tensor category structure on $\cC_V$ or any larger category of $V^G$-modules to prove that $J^{M_3}_{M_1, M_2}$ is injective:
\begin{propo}\label{Jsurjective}
 For any $G$-modules $M_1$, $M_2$, and $M_3$ in $\rep\,G$, the linear map $J^{M_3}_{M_1, M_2}$ is injective.
\end{propo}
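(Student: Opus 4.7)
The plan is to establish the contrapositive: if $f \in \hom_G(M_1 \otimes M_2, M_3)$ satisfies $J^{M_3}_{M_1,M_2}(f) = 0$, then $f = 0$. Since $M_1$ and $M_2$ are completely reducible, it suffices to show that $f \circ (\iota_1 \otimes \iota_2) = 0$ for every pair of $G$-embeddings $\iota_i : M_{\chi_i}^* \hookrightarrow M_i$ of irreducibles, where $\chi_i \in \widehat{G}$.

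Fix bases $\{m_{\chi_i,k}\}$ of $M_{\chi_i}$ with dual bases $\{m_{\chi_i,k}'\}$ of $M_{\chi_i}^*$, choose nonzero $v_{\chi_i} \in V_{\chi_i}$, and form
\begin{equation*}
w_i := \sum_k \iota_i(m_{\chi_i,k}') \otimes (m_{\chi_i,k} \otimes v_{\chi_i}) \in \Phi(M_i),
\end{equation*}
which is $G$-invariant by the same calculation as in Proposition \ref{PhiAbEquiv}. The hypothesis $J^{M_3}_{M_1,M_2}(f) = 0$ then forces $\cY_f(w_1, x) w_2 = 0$. Picking a basis $\{m_{3,p}\}$ of $M_3$ and writing $f(\iota_1(m_{\chi_1,k}') \otimes \iota_2(m_{\chi_2,l}')) = \sum_p c_{kl}^p\, m_{3,p}$, this vanishing splits, for each $p$, into
\begin{equation*}
\sum_{k,l} c_{kl}^p\, Y(m_{\chi_1,k} \otimes v_{\chi_1}, x)(m_{\chi_2,l} \otimes v_{\chi_2}) = 0
\end{equation*}
as a formal series, where $m_{\chi_i,k} \otimes v_{\chi_i}$ is interpreted as an element of the subspace $M_{\chi_i} \otimes V_{\chi_i} \subseteq V^{\alpha_i}$ for the unique $\alpha_i \in A$ such that $V^{\chi_i} \subseteq V^{\alpha_i}$.

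The key input, and the reason the argument needs no rigidity hypothesis, is Lemma \ref{DMgen}. The vertex operator $Y$ restricted to $V^{\alpha_1} \otimes V^{\alpha_2}$ is a $V^0$-intertwining operator of type $\binom{V^{\alpha_1+\alpha_2}}{V^{\alpha_1}\,V^{\alpha_2}}$, and by Proposition \ref{Valphasimple} each $V^{\alpha_i}$ is a simple $V^0$-module. Regrouping the displayed identity as $\sum_l Y(u_l^p, x)(m_{\chi_2,l} \otimes v_{\chi_2}) = 0$ with $u_l^p := \sum_k c_{kl}^p\, (m_{\chi_1,k} \otimes v_{\chi_1}) \in V^{\alpha_1}$, and using that $\{m_{\chi_2,l} \otimes v_{\chi_2}\}_l$ is linearly independent in $V^{\alpha_2}$, Lemma \ref{DMgen}, applied iteratively after discarding any vanishing summands, forces every $u_l^p = 0$. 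A second use of linear independence, now of $\{m_{\chi_1,k} \otimes v_{\chi_1}\}_k$ in $V^{\alpha_1}$, yields $c_{kl}^p = 0$ for every $k,l,p$, which is exactly the vanishing of $f \circ (\iota_1 \otimes \iota_2)$. The main conceptual obstacle would be the failure of $V^{\alpha_i}$ to be simple as a $V^0$-module, since then Lemma \ref{DMgen} would not apply directly; this is precisely what Proposition \ref{Valphasimple} rules out in the simple abelian intertwining algebra setting.
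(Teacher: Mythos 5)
Your proof is correct and follows essentially the same route as the paper's: expand $J^{M_3}_{M_1,M_2}(f)$ applied to the canonical $G$-invariant vectors built from irreducible summands of $M_1$ and $M_2$, split by a basis of $M_3$, and apply Lemma \ref{DMgen} (twice, together with linear independence of $\{m_{\chi_i,k}\otimes v_{\chi_i}\}$) to force the structure constants of $f$ to vanish. The only cosmetic differences are that the paper invokes semisimplicity to take $M_1$, $M_2$, $M_3$ irreducible from the start (rather than working with embeddings $\iota_i$ and a basis of a general $M_3$), and your explicit remark about discarding vanishing $u_l^p$ before applying Lemma \ref{DMgen} makes a point the paper leaves implicit.
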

\begin{proof}
Because $\rep\,G$ is semisimple, we may assume that $M_1=M_{\chi}^*$, $M_2=M_\psi^*$, and $M_3=M_\rho^*$ for irreducible characters $\chi,\psi,\rho\in\widehat{G}$. We shall show that if the intertwining operator $\mathcal{Y}=J^{M_3}_{M_1,M_2}(f)\circ(\varphi_\chi\otimes\varphi_\psi)$ of type $\binom{\Phi(M_\rho^*)}{V_\chi\,V_\psi}$ equals zero, then $f=0$ as well. Thus suppose for all $v_\chi\in V_\chi$ and $v_\psi\in V_\psi$ we have
\begin{align*}
 0 = \cY(v_\chi, x)v_\psi & =\sum_{i, j}\cY_f\left(m_{\chi, i}'\otimes(m_{\chi, i}\otimes v_\chi), x\right)\left(m_{\psi, j}'\otimes(m_{\psi,j}\otimes v_\psi)\right)\nonumber\\
 & =\sum_{i,j} f(m_{\chi, i}'\otimes m_{\psi,j}')\otimes Y(m_{\chi,i}\otimes v_\chi, x)(m_{\psi,j}\otimes v_\psi)\nonumber\\
 & =\sum_{i,j,k} \langle f(m_{\chi,i}'\otimes m_{\psi,j}'), m_{\rho,k}\rangle m_{\rho,k}'\otimes Y(m_{\chi,i}\otimes v_\chi, x)(m_{\psi,j}\otimes v_\psi)\nonumber\\
 &=\sum_k\left(m_{\rho,k}'\otimes\sum_j Y\left(\sum_i \langle f(m_{\chi,i}'\otimes m_{\psi,j}'), m_{\rho,k}\rangle(m_{\chi,i}\otimes v_\chi), x)\right)(m_{\psi,j}\otimes v_\psi)\right).
\end{align*}
Since the basis vectors $m_{\rho,k}'$ are linearly independent, we have
\begin{equation*}
 \sum_j Y\left(\sum_i \langle f(m_{\chi,i}'\otimes m_{\psi,j}'), m_{\rho,k}\rangle(m_{\chi,i}\otimes v_\chi), x)\right)(m_{\psi,j}\otimes v_\psi) = 0
\end{equation*}
for all $k$. Now we may take $v_\psi$ to be non-zero so that the vectors $\lbrace m_{\psi,j}\otimes v_\psi\rbrace$ are linearly independent. Then because $V$ is simple, \cite[Lemma 3.1]{DM1} (or more precisely, its generalization Lemma \ref{DMgen} here) implies that
\begin{equation*}
 \sum_i \langle f(m_{\chi,i}'\otimes m_{\psi,j}'), m_{\rho,k}\rangle(m_{\chi,i}\otimes v_\chi) = 0
\end{equation*}
for all $j, k$. Assuming also that $v_\chi$ is non-zero so that the vectors $\lbrace m_{\chi,i}\otimes v_\chi\rbrace$ are linearly independent, this means that
\begin{equation*}
 \langle f(m_{\chi,i}'\otimes m_{\psi,j}'), m_{\rho,k}\rangle = 0
\end{equation*}
for all $i,j,k$, that is, $f=0$.
\end{proof}

\section{The main theorems}\label{sec:maintheorems}

Continuing to work under Assumption \ref{assum1}, our goal in this section is to show that if the linear maps $J^{M_3}_{M_1, M_2}: \hom_G(M_1\otimes M_2, M_3)\rightarrow\mathcal{V}^{\Phi(M_3)}_{\Phi(M_1)\,\Phi(M_2)}$ are surjective as well as injective, then the category $\mathcal{C}_V$ of $V^G$-modules has vertex tensor category structure. Conversely, if we know that $\mathcal{C}_V$ is contained in any vertex tensor category of $V^G$-modules (which could be larger than $\cC_V$), then $\Phi$ induces a braided tensor equivalence (and in particular the maps $J^{M_3}_{M_1, M_2}$ are surjective). In this second case, the tensor products of modules in $\cC_V$ are thus independent of the larger category $\cC$ (recall Remark \ref{tensprodsindiffC}), and since $\rep_{A,F,\Omega}\,G$ is rigid (and indeed ribbon), $\cC_V$ will be a ribbon braided tensor subcategory of $V^G$-modules.

\subsection{Equal fusion rules implies vertex tensor category structure}\label{subsec:firstmaintheo}

In this subsection, we assume that the linear maps $J^{M_3}_{M_1\,M_2}$ are isomorphisms, and in particular, surjective. This allows us to identify the $P(z)$-tensor products in the category $\cC_V$ of $V^G$-modules. Moreover, this effectively means that all $V^G$-intertwining operators in $\cC_V$ are derived from the vertex operator on $V$, which has the associativity properties needed to establish the $P(z_1,z_2)$-associativity isomorphisms in $\cC_V$.

\begin{theo}\label{fusionruletovrtxtens}
 If the linear map $J^{M_3}_{M_1, M_2}: \hom_G(M_1\otimes M_2, M_3)\rightarrow\mathcal{V}^{\Phi(M_3)}_{\Phi(M_1)\,\Phi(M_2)}$ of \eqref{Jdef} is surjective for all finite-dimensional continuous $G$-modules $M_1$, $M_2$, $M_3$, then the category $\cC_V$ of $V^G$-modules has vertex tensor category structure as described in Section \ref{subsec:vrtxtenscat}.
\end{theo}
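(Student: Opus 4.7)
The strategy is to transfer the vertex tensor structure from the ambient abelian intertwining algebra $V$ into $\cC_V$, using the identification $\Phi(M_1)\boxtimes_{P(z)}\Phi(M_2)=\Phi(M_1\otimes M_2)$. By Proposition \ref{PhiAbEquiv}, every object of $\cC_V$ is isomorphic to $\Phi(M)$ for some $M\in\rep G$, so I lose nothing by restricting attention to such objects. For the $P(z)$-tensor product I take $\Phi(M_1)\boxtimes_{P(z)}\Phi(M_2):=\Phi(M_1\otimes M_2)$ with $P(z)$-intertwining map the restriction of $I_{\mathcal{Y}_{1_{M_1\otimes M_2}}}$ to $\Phi(M_1)\otimes\Phi(M_2)$, where $\mathcal{Y}_f$ is as in the definition of $J$. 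To verify the universal property, given any $P(z)$-intertwining map $I: \Phi(M_1)\otimes\Phi(M_2)\rightarrow\overline{\Phi(M_3)}$, surjectivity of $J^{M_3}_{M_1,M_2}$ yields a $G$-homomorphism $f: M_1\otimes M_2\rightarrow M_3$ with $\mathcal{Y}_I=J(f)$, whence $f_I:=\Phi(f)$ is the desired map; uniqueness follows from injectivity of $J$ (Proposition \ref{Jsurjective}).

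Next I construct the natural isomorphisms by lifting the corresponding structures on $Y$. The parallel transport $T_{\gamma;\Phi(M_1),\Phi(M_2)}$ is the unique endomorphism of $\Phi(M_1\otimes M_2)$ induced, via the universal property above, by the $P(z_2)$-intertwining map $\mathcal{Y}_{1_{M_1\otimes M_2}}(\cdot,e^{\ell(z_1)})\cdot$, so compatibility with the characterization in Section \ref{subsec:vrtxtenscat} is automatic. The unit isomorphism $l_{P(z);\Phi(M)}$ is the canonical $\Phi(\mathbb{C}\otimes M)\cong\Phi(M)$, which matches $Y_W(v,z)w$ because $\mathcal{Y}_{1_{\mathbb{C}\otimes M}}(1\otimes v,e^{\log z})(m\otimes v')=m\otimes Y(v,e^{\log z})v'$, and analogously for $r_{P(z)}$ via the $L(-1)$-derivative property. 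For associativity I use Proposition \ref{threevrtxops} to guarantee convergence of the triple products defining $w_1\boxtimes_{P(z_1)}(w_2\boxtimes_{P(z_2)} w_3)$, and Proposition \ref{abintwalgassoc} together with Remark \ref{abintwalgassocreal} to identify $\cA_{P(r_1,r_2);\Phi(M_1),\Phi(M_2),\Phi(M_3)}$ as multiplication by $F(\alpha_1,\alpha_2,\alpha_3)^{-1}$ on each $A^3$-homogeneous summand when $r_1>r_2>r_1-r_2>0$; parallel transport extends this to arbitrary $z_1,z_2$. For the braiding, Proposition \ref{abintalgskewsym} shows that $e^{zL(-1)}Y(v_2,e^{\log z+\pi i})v_1=\Omega(\alpha_1,\alpha_2)^{-1}Y(v_1,e^{\log z})v_2$ on $V^{\alpha_1}\otimes V^{\alpha_2}$, so $\cR_{P(z);\Phi(M_1),\Phi(M_2)}$ is the canonical swap $\Phi(M_1\otimes M_2)\cong\Phi(M_2\otimes M_1)$ rescaled by $\Omega(\alpha_1,\alpha_2)^{-1}$ on each graded component.

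Finally I verify the coherence conditions. The triangle and unit-compatibility diagrams reduce to the vacuum property for $Y$ and to $\Omega(\alpha,0)=\Omega(0,\alpha)=1$ from Remark \ref{OmegaNormal}. The hexagon identities follow from combining the $P(z)$-braiding formula above with Proposition \ref{abintalgskewsym} and the two hexagon identities for $(F,\Omega)$. The pentagon identity requires the most care: one expresses both composites of four associators around the pentagon as endomorphisms of $\Phi(M_1\otimes M_2\otimes M_3\otimes M_4)$; after reducing via parallel transport to positive real parameters so that all branch-of-logarithm factors in Proposition \ref{abintwalgassoc} disappear (by Remark \ref{abintwalgassocreal}), the pentagon reduces exactly to the pentagon identity for the abelian $3$-cocycle $F$ acting on the $A^4$-graded components.

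The hardest step will be the pentagon verification, because it requires analyzing four-fold iterated vertex-operator products under all five bracketings and tracking the phase factors $\Omega(\alpha_i,\alpha_j)\Omega(\alpha_j,\alpha_i)$ arising from the logarithm-branch discrepancies $p_{z_i,z_j}$ in Proposition \ref{abintwalgassoc}. The key simplifying observation is that after transporting to positive real configurations where all $p$'s vanish, these phase factors drop out and only the $F$-factors survive; the associator pentagon in $\cC_V$ then follows directly from the pentagon identity in the definition of an abelian $3$-cocycle. The existence and convergence of the relevant quadruple vertex operator products, needed to justify this reduction, is precisely what Proposition \ref{threevrtxops} (applied iteratively) provides.
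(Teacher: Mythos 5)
Your overall strategy mirrors the paper's — construct the $P(z)$-tensor product as $\Phi(M_1\otimes M_2)$ (the paper uses the isomorphic model $\bigoplus_\chi \hom_G(M_1\otimes M_2, M_\chi)^*\otimes\Phi(M_\chi)$), verify the universal property via surjectivity and injectivity of $J$, and obtain the associativity isomorphisms from Propositions \ref{abintwalgassoc} and \ref{threevrtxops}. The construction of the isomorphisms themselves is essentially the same. Where you genuinely diverge is in how the coherence conditions are established.

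The paper does not verify the pentagon, hexagon, or triangle coherence conditions directly. Instead, it observes that once $P(z)$-tensor products exist and the associativity isomorphisms are constructed (via convergence and associativity of compositions of two intertwining operators), the only remaining ingredient needed for \cite[Theorem 12.15]{HLZ8} to apply is the convergence and regular-singular expansion property for products of \emph{three} intertwining operators (\cite[Assumption 12.2]{HLZ8}), which it then checks using Proposition \ref{threevrtxops}. The coherence diagrams are then delivered for free by the general HLZ theorem. This is a meaningful economy: the HLZ proof of coherence is a substantial piece of complex analysis that one does not want to reproduce.

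By contrast, you propose verifying the pentagon and hexagon diagrams directly, reducing to the cocycle identities for $(F,\Omega)$ after ``transporting to positive real configurations.'' The idea is morally correct — in this special case the pentagon really does come down to the $3$-cocycle pentagon — but as stated this step is substantially underdeveloped. The pentagon in the induced braided tensor category is a fixed commutative diagram involving four associators, each of which is a composite of parallel transports and a $P(z_1,z_2)$-associativity isomorphism with particular positive real parameters. Verifying that it commutes requires carefully tracking how parallel transport factors interact across all five bracketings and showing that the relevant four-point functions have the required single-valued extensions on suitable simply-connected domains, so that the branch bookkeeping you wave at (``the $p$'s vanish'') can actually be implemented coherently. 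The same issues apply to the hexagons. Carrying this out rigorously would amount to reproving a sizeable portion of \cite[Theorem 12.15]{HLZ8} by hand in this special case. You should either invoke that theorem (after verifying the convergence/expansion hypothesis, as the paper does) or substantially expand the pentagon/hexagon verification into an argument about analytic continuation of four-point functions; as written there is a real gap between the sketch and a complete proof.
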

\begin{proof}
 We first need to show the existence of $P(z)$-tensor products in $\cC_V$ for $z\in\CC^\times$; then the existence of parallel transport, unit, and braiding isomorphisms follow immediately from the universal property of the $P(z)$-tensor product (see \cite[Section 12.2]{HLZ8}). Then we need to establish existence of
 $P(z_1,z_2)$-associativity isomorphisms in $\cC_V$ for $\vert z_1\vert>\vert z_2\vert>\vert z_1-z_2\vert>0$. Finally, the only extra condition we need to verify for the coherence properties of \cite[Theorem 12.15]{HLZ8} is suitable convergence of products of three intertwining operators in $\cC_V$ (see \cite[Assumption 12.2]{HLZ8}).
 
 For $z\in\CC^\times$, the $P(z)$-tensor product of modules $W_1$, $W_2$ in $\cC_V$ can be constructed essentially as in \cite[Proposition 4.33]{HLZ3}. Specifically, since $\Phi$ is an equivalence of categories, we may take $W_1=\Phi(M_1)$ and $W_2=\Phi(M_2)$ for $M_1$, $M_2$ modules in $\rep G$. Then we can take
 \begin{equation*}
  \Phi(M_1)\tens_{P(z)} \Phi(M_2) = \bigoplus_{\chi\in\widehat{G}} \hom_G(M_1\otimes M_2, M_\chi)^*\otimes\Phi(M_\chi)
 \end{equation*}
with tensor product $P(z)$-intertwining map
\begin{equation*}
 \boxtimes_{P(z)}=\sum_{\chi\in \widehat{G}}\sum_i f_i'\otimes \left(J^{M_\chi}_{M_1, M_2}(f_i)\right)(\cdot,e^{\log z})\cdot,
\end{equation*}
where for each $\chi\in\widehat{G}$, $\lbrace f_i\rbrace$ represents any basis of $\hom_G(M_1\otimes M_2,M_\chi)$, and $\lbrace f_i'\rbrace$ is the dual basis. Note that although $\widehat{G}$ may be infinite, $\hom_G(M_1\otimes M_2, M_\chi)$ is non-zero for only finitely many $\chi$, so $\Phi(M_1)\tens_{P(z)}\Phi(M_2)$ is an object of $\cC_V$.

Now to show that $\Phi(M_1)\tens_{P(z)}\Phi(M_2)$ satisfies the universal property of a $P(z)$-tensor product, consider any $P(z)$-intertwining map $I$ of type $\binom{W_3}{\Phi(M_1)\,\Phi(M_2)}$ where $W_3$ is a module in $\cC_V$. Since $\cC_V$ is semisimple and equivalent as a category to $\rep G$, we may take $W_3=\Phi(M_\psi)$ for some $\psi\in\widehat{G}$. Then because $J^{M_\psi}_{M_1, M_2}$ is surjective by assumption, $\cY_I=J^{M_\psi}_{M_1, M_2}(f)$ for some $G$-module homomorphism $f: M_1\otimes M_2\rightarrow M_\psi$. So one needs to verify that 
\begin{equation}\label{univpropcond}
 J^{M_\psi}_{M_1, M_2}(f)=\eta\circ\left(\sum_{\chi, i} f_i'\otimes J^{M_\chi}_{M_1, M_2}(f_i)\right)
\end{equation}
for some unique $V^G$-module homomorphism $\eta: \Phi(M_1)\tens_{P(z)}\Phi(M_2)\rightarrow\Phi(M_\psi)$. In fact, the construction of $\Phi(M_1)\tens_{P(z)}\Phi(M_2)$ shows that $\eta$ amounts to a linear functional on $\hom_G(M_1\otimes M_2, M_\psi)^*$, that is, a homomorphism in $\hom_G(M_1\otimes M_2, M_\psi)$. Naturally enough, one then uniquely takes $\eta=f$ for \eqref{univpropcond} to hold. This establishes the existence of $P(z)$-tensor products in $\cC_V$.

Now to establish the existence of $P(z_1,z_2)$-associativity isomorphisms, it is sufficient to prove the following convergence and associativity of intertwining operators in $\cC_V$ (see for instance \cite[Theorem 9.29, Corollary 9.30]{HLZ6}):
\begin{itemize}
 \item Given modules $W_1$, $W_2$, $W_3$, $W_4$, and $X_1$ in $\cC_V$ and intertwining operators $\cY_1$, $\cY_2$ of types $\binom{W_4}{W_1\,X_1}$ and $\binom{X_1}{W_2\,W_3}$, respectively, the series
 \begin{equation*}
  \langle w_4', \cY_1(w_1,z_1)\cY_2(w_2,z_2)w_3\rangle =\sum_{h\in\CC} \langle w_4', \cY_1(w_1,z_1)\pi_h(\cY_2(w_2,z_2)w_3)\rangle
 \end{equation*}
converges absolutely (to a multivalued analytic function) when $\vert z_1\vert>\vert z_2\vert>0$ for any $w_1\in W_1$, $w_2\in W_2$, $w_3\in W_3$, and $w_4'\in W_4'$. Moreover, there is a module $X_2$ in $\cC_V$ and intertwining operators $\cY^1$, $\cY^2$ of types $\binom{W_4}{X_2\,W_3}$ and $\binom{X_2}{W_1\,W_2}$, respectively, such that
\begin{equation*}
  \langle w_4', \cY^1(\cY^2(w_1,z_0)w_2,z_2)w_3\rangle =\sum_{h\in\CC} \langle w_4', \cY^1(\pi_h(\cY^2(w_1,z_0)w_2),z_2)w_3\rangle
\end{equation*}
converges absolutely when $\vert z_2\vert>\vert z_0\vert>0$ for any $w_1\in W_1$, $w_2\in W_2$, $w_3\in W_3$, and $w_4'\in W_4'$, and such that
\begin{equation*}
 \langle w_4', \cY_1(w_1,z_1)\cY_2(w_2,z_2)w_3\rangle = \langle w_4', \cY^1(\cY^2(w_1,z_1-z_2)w_2,z_2)w_3\rangle
\end{equation*}
(as multivalued functions) in the region $\vert z_1\vert>\vert z_2\vert>\vert z_1-z_2\vert>0$.

\item Conversely, given modules $W_1$, $W_2$, $W_3$, $W_4$, $X_2$ and intertwining operators $\cY^1$, $\cY^2$, as above, there is a module $X_1$ in $\cC_V$ and intertwining operators $\cY_1$, $\cY_2$ as above such that
\begin{equation*}
 \langle w_4', \cY^1(\cY^2(w_1,z_0)w_2,z_2)w_3\rangle = \langle w_4', \cY_1(w_1,z_0+z_2)\cY_2(w_2,z_2)w_3\rangle
\end{equation*}
in the region $\vert z_0+z_2\vert>\vert z_2\vert>\vert z_0\vert>0$.
\end{itemize}

To establish this convergence and associativity of intertwining operators in $\cC_V$, we note that because $\cC_V$ is semisimple, all multivalued functions obtained from compositions of intertwining operators as above are sums of ones obtained by taking $W_i$ to be simple $V^G$-modules of the form $V_\chi$ for $\chi\in\widehat{G}$. Moreover, because $\Phi$ is a categorical equivalence, we may freely use either $V_\chi$ or $\Phi(M_\chi^*)$ together with the isomorphisms $\varphi_\chi: V_\chi\rightarrow\Phi(M_\chi^*)$ of the previous section. Thus we consider a product of intertwining operators
\begin{equation*}
 \langle w_4', \cY_1(w_1,z_1)\cY_2(w_2,z_2)w_3\rangle
\end{equation*}
where $\cY_1$ is of type $\binom{\Phi(M_4)}{V_{\chi_1}\,\Phi(N_1)}$ and $\cY_2$ is of type $\binom{\Phi(N_1)}{V_{\chi_2}\,V_{\chi_3}}$, for $\chi_1,\chi_2,\chi_3\in\widehat{G}$ and $M_4$, $N_1$ irreducible $G$-modules. Since by assumption the linear maps $J^{M_3}_{M_1, M_2}$ are surjective, we can take
\begin{equation*}
 \cY_1=J^{M_{4}}_{M_{\chi_1}^*, N_1}(f_1)\circ(\varphi_{\chi_1}\otimes 1_{\Phi(N_1)})
\end{equation*}
and
\begin{equation*}
 \cY_2=J^{N_1}_{M_{\chi_2}^*, M_{\chi_3}^*}(f_2)\circ(\varphi_{\chi_2}\otimes\varphi_{\chi_3})
\end{equation*}
for $f_1\in\hom_G(M_{\chi_1}^*\otimes N_1, M_4)$ and $f_2\in\hom_G(M_{\chi_2}^*\otimes M_{\chi_3}^*, N_1)$.

Then using $\lbrace m_{\chi_1, i}\rbrace$, $\lbrace m_{\chi_2, j}\rbrace$, and $\lbrace m_{\chi_3, k}\rbrace$ to denote bases of $M_{\chi_1}$, $M_{\chi_2}$, and $M_{\chi_3}$, and using $\lbrace m_{\chi_1, i}'\rbrace$, $\lbrace m_{\chi_2, j}'\rbrace$, and $\lbrace m_{\chi_3, k}'\rbrace$ to denote their respective dual bases, we have from the definitions
\begin{align*}
 \langle w_4', \cY_1 & (w_1,z_1)\cY_2(w_2,z_2)w_3\rangle = \sum_{j,k} \langle w_4', \cY_1(w_1,z_1)\left(f_2(m_{\chi_2,j}'\otimes m_{\chi_3, k}')\otimes Y(m_{\chi_2,j}\otimes w_2, z_2)(m_{\chi_3,k}\otimes w_3)\right)\rangle\nonumber\\
 & =\sum_{i,j,k}\left\langle w_4', f_1(m_{\chi_1,i}'\otimes f_2(m_{\chi_2,j}'\otimes m_{\chi_3,k}'))\otimes Y(m_{\chi_1,i}\otimes w_1, z_1)Y(m_{\chi_2,j}\otimes w_2, z_2)(m_{\chi_3,k}\otimes w_3)\right\rangle,
\end{align*}
and this series converges absolutely when $\vert z_1\vert>\vert z_2\vert>0$. If we write
\begin{equation*}
 g=f_1\circ(1_{M_{\chi_1}^*}\otimes f_2)\circ\cA_{M_{\chi_1}^*, M_{\chi_2}^*, M_{\chi_3}^*}^{-1},
\end{equation*}
and apply Proposition \ref{abintwalgassoc} and Remark \ref{abintwalgassocreal} in the case that $z_1$, $z_2$ are positive real numbers such that $z_1>z_2>z_1-z_2>0$, we get
\begin{align*}
 \langle w_4', & \cY_1 (w_1,e^{\log z_1})\cY_2(w_2,e^{\log z_2})w_3\rangle = F(\alpha_1,\alpha_2,\alpha_3)\cdot\nonumber\\
 &\hspace{2em} \cdot\sum_{i,j,k} \langle g((m_{\chi_1,i}'\otimes m_{\chi_2,j}')\otimes m_{\chi_3, k}')\otimes Y(Y(m_{\chi_1,i}\otimes w_1, e^{\log(z_1-z_2)})(m_{\chi_2,j}\otimes w_2), e^{\log z_2})(m_{\chi_3,k}\otimes w_3)\rangle\nonumber\\
 & = \left\langle w_4', \cY^1\left(\sum_{i,j} (m_{\chi_1,i}'\otimes m_{\chi_2,j}')\otimes Y(m_{\chi_1,i}\otimes w_1, e^{\log(z_1-z_2)})(m_{\chi_2,j}\otimes w_2), e^{\log z_2}\right)w_3\right\rangle\nonumber\\
 & = \langle w_4', \cY^1(\cY^2(w_1, e^{\log(z_1-z_2)})w_2,e^{\log z_2})w_3\rangle,
\end{align*}
where $V^{\chi_i}\subseteq V^{\alpha_i}$ for $i=1,2,3$,
\begin{equation*}
 \cY^1=F(\alpha_1,\alpha_2,\alpha_3)J^{M_4}_{M_{\chi_1}^*\otimes M_{\chi_2}^*, M_{\chi_3}^*}(g)\circ(1_{M_{\chi_1}^*\otimes M_{\chi_2}^*}\otimes\varphi_{\chi_3})
\end{equation*}
is an intertwining operator of type $\binom{\Phi(M_4)}{\Phi(M_{\chi_1}^*\otimes M_{\chi_2}^*)\,V_{\chi_3}}$ and
\begin{equation*}
 \cY^2=J^{M_{\chi_1}^*\otimes M_{\chi_2}^*}_{M_{\chi_1}^*, M_{\chi_2}^*}(1_{M_{\chi_1}^*\otimes M_{\chi_2}^*})\circ(\varphi_{\chi_1}\otimes\varphi_{\chi_2})
\end{equation*}
is an intertwining operator of type $\binom{\Phi(M_{\chi_1}^*\otimes M_{\chi_2}^*)}{V_{\chi_1}\,V_{\chi_2}}$. This shows that there are single-valued branches of the multivalued functions
\begin{equation*}
 \langle w_4', \cY_1(w_1,z_1)\cY_2(w_2,z_2)w_3\rangle
\end{equation*}
and
\begin{equation*}
 \langle w_4', \cY^1(\cY^2(w_1,z_1-z_2)w_2,z_2)w_3\rangle
\end{equation*}
that agree on any simply-connected set containing the intersection of the region $\vert z_1\vert>\vert z_2\vert>\vert z_1-z_2\vert>0$ with $\RR_+\times\RR_+$. Consequently, the two multivalued functions restrict to equal multivalued functions on the domain $\vert z_1\vert>\vert z_2\vert>\vert z_1-z_2\vert>0$.

The same kind of argument shows that it is possible to write any iterate
\begin{equation*}
 \langle w_4', \cY^1(\cY^2(w_1,z_0)w_2,z_2)w_3\rangle
\end{equation*}
of intertwining operators in $\cC_V$ as a product of intertwining operators. This establishes the existence of associativity isomorphisms in $\cC_V$.

Finally, establishing the coherence conditions for the vertex tensor category structure on $\cC_V$ requires the additional convergence condition found in \cite[Assumption 12.2]{HLZ8}:
\begin{itemize}
 \item Given modules $W_1$, $W_2$, $W_3$, $W_4$, $W_5$, $X_1$, $X_2$ in $\cC_V$ and intertwining operators $\cY_1$, $\cY_2$, $\cY_3$ of types $\binom{W_5}{W_1\,X_1}$, $\binom{X_1}{W_2\,X_2}$, $\binom{X_2}{W_3\,W_4}$, respectively, the double series
 \begin{align*}
  \langle w_5', \cY_1(w_1,z_1)\cY_2(w_2,z_2)\cY_3(w_3,z_3)w_4\rangle = \sum_{h_1,h_2\in\CC} \langle w_5', \cY_1(w_1,z_1)\pi_{h_1}\left(\cY_2(w_2,z_2)\pi_{h_2}\left(\cY_3(w_3,z_3)w_4\right)\right)\rangle
 \end{align*}
converges absolutely when $\vert z_1\vert>\vert z_2\vert>\vert z_3\vert>\vert z_4\vert>0$ for any $w_1\in W_1$, $w_2\in W_2$, $w_3\in W_3$, $w_4\in W_4$, and $w_5'\in W_5'$. Moreover, the convergent double series can be extended to a multivalued analytic function with only possible poles at $z_i=0,\infty$ ($i=1,2,3$) and $z_i=z_j$ ($i\neq j$). Near each singular point, the multivalued function can be expanded as a series having the same form as the expansion of a solution to a regular singular point differential equation at a singularity.
\end{itemize}
To verify this condition, we may again take $W_i=V_{\chi_i}$ for $i=1,2,3,4$, $W_5=\Phi(M_5)$, $X_j=\Phi(N_j)$ for $j=1,2$,
\begin{equation*}
 \cY_1=J^{M_5}_{M_{\chi_1}^*, N_1}(f_1)\circ(\varphi_{\chi_1}\otimes 1_{\Phi(N_1)})
\end{equation*}
for $f_1\in\hom_G(M_{\chi_1}^*\otimes N_1, M_5)$,
\begin{equation*}
 \cY_2=J^{N_1}_{M_{\chi_2}^*, N_2}(f_2)\circ(\varphi_{\chi_2}\otimes 1_{\Phi(N_2)})
\end{equation*}
for $f_2\in\hom_G(M_{\chi_2}^*\otimes N_2, N_1)$, and
\begin{equation*}
 \cY_3=J^{N_2}_{M_{\chi_3}^*, M_{\chi_4}^*}(f_3)\circ(\varphi_{\chi_3}\otimes\varphi_{\chi_4})
\end{equation*}
for $f_3\in\hom_G(M_{\chi_3}^*\otimes M_{\chi_4}^*, N_2)$. Then we have
\begin{align*}
  \langle w_5', \cY_1 & (w_1,z_1)\cY_2(w_2,z_2)\cY_3(w_3,z_3)w_4\rangle\nonumber\\
  & =\sum_{i,j,k,l} \langle w_5', f_1(m_{\chi_1,i}'\otimes f_2(m_{\chi_2,j}'\otimes f_3(m_{\chi_3,k}'\otimes m_{\chi_4,l}')))\otimes\nonumber\\
  &\hspace{5em} \otimes Y(m_{\chi_1, i}\otimes w_1, z_1)Y(m_{\chi_2,j}\otimes w_2, z_2)Y(m_{\chi_3, k}\otimes w_3, z_3)(m_{\chi_4,l}\otimes w_4)\rangle,
\end{align*}
using the usual notation for bases of $G$-modules and their duals. The desired convergence and expansion condition then follows directly from Proposition \ref{threevrtxops}.
\end{proof}

\subsection{Vertex tensor category structure implies a braided equivalence}\label{subsec:secondmaintheo}

In general it seems to be difficult to show that the linear maps $J^{M_3}_{M_1, M_2}$ are surjective (and thus calculate the fusion rules among the $V_\chi$) without using associativity of intertwining operators. However, in many cases, vertex tensor category structure on a suitable category of $V^G$-modules is known independently. Thus in this subsection we will work under the following assumption:
\begin{assum}
 There is a vertex tensor category $\cC$ (as in \cite{HLZ1}-\cite{HLZ8}) of grading-restricted generalized $V^G$-modules that contains all $V_\chi$ for $\chi\in\widehat{G}$.
\end{assum}

For any $z\in\CC^\times$, any morphism $f: M_1\otimes M_2\rightarrow M_3$ in $\rep_{A,F,\Omega}\,G$ induces a $P(z)$-intertwining map $I_f$ of type $\binom{\Phi(M_3)}{\Phi(M_1)\,\Phi(M_2)}$ by 
$$I_f=J^{M_3}_{M_1, M_2}(f)\left(\cdot, e^{\log z}\right)\cdot.$$
Then the universal property of $P(z)$-tensor products implies there is a unique $V^G$-module homomorphism
\begin{equation*}
 J_{P(z); M_1,M_2}: \Phi(M_1)\tens_{P(z)}\Phi(M_2)\rightarrow\Phi(M_1\otimes M_2)
\end{equation*}
such that
\begin{equation*}
 \overline{J_{P(z); M_1,M_2}}(w_1\boxtimes_{P(z)} w_2) = I_{1_{M_1\otimes M_2}}(w_1\otimes w_2)
\end{equation*}
for $w_1\in\Phi(M_1)$ and $w_2\in\Phi(M_2)$. We will set $\boxtimes=\tens_{P(1)}$ and $J_{M_1,M_2}=J_{P(1); M_1,M_2}$. An easy consequence of Proposition \ref{Jinjective} is the following:
\begin{propo}\label{Jsurjectiverema}
 For all modules $M_1$, $M_2$ in $\rep_{A,F,\Omega} G$ and all $z\in\CC^\times$, $J_{P(z); M_1, M_2}$ is surjective.
\end{propo}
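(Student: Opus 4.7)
The plan is to use the semisimplicity of $\cC_V$ together with the injectivity of $J^{M_3}_{M_1,M_2}$ proved in Proposition \ref{Jsurjective}. The key observation is that choosing $M_3 = M_1\otimes M_2$ and $f = 1_{M_1\otimes M_2}$ in the construction of $J^{M_3}_{M_1,M_2}$ produces precisely the intertwining operator whose associated $P(z)$-intertwining map is used in the universal-property definition of $J_{P(z); M_1, M_2}$.

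More precisely, suppose towards contradiction that $J_{P(z); M_1, M_2}$ is not surjective. Its image is a $V^G$-submodule of $\Phi(M_1\otimes M_2)$, which lies in the semisimple category $\cC_V$ with simple objects $\{V_\chi\}_{\chi\in\widehat{G}}$, so there is a nonzero $V^G$-module map $\xi : \Phi(M_1\otimes M_2)\to V_\chi$ for some $\chi$ with $\xi\circ J_{P(z); M_1, M_2} = 0$. Unpacking the defining relation $\overline{J_{P(z); M_1, M_2}}(w_1\boxtimes_{P(z)} w_2) = I_{1_{M_1\otimes M_2}}(w_1\otimes w_2)$ gives $\overline{\xi}\circ I_{1_{M_1\otimes M_2}} = 0$, and then the bijective correspondence \eqref{YfromI} between intertwining operators and $P(z)$-intertwining maps yields $\xi\circ \mathcal{Y}_{1_{M_1\otimes M_2}} = 0$ as intertwining operators.

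Now invoke the equivalence of abelian categories $\Phi : \rep G\to\cC_V$ from Proposition \ref{PhiAbEquiv}: composing $\xi$ with the isomorphism $\varphi_\chi : V_\chi\to\Phi(M_\chi^*)$ produces a nonzero $V^G$-module map $\varphi_\chi\circ\xi: \Phi(M_1\otimes M_2)\to\Phi(M_\chi^*)$, and by full faithfulness there is a (necessarily nonzero) $f\in\hom_G(M_1\otimes M_2, M_\chi^*)$ with $\Phi(f) = \varphi_\chi\circ\xi$. Directly from the definitions $\Phi(f) = (f\otimes 1_V)\vert_{\Phi(M_1\otimes M_2)}$ and $\mathcal{Y}_g(m_1\otimes v_1, x)(m_2\otimes v_2) = g(m_1\otimes m_2)\otimes Y(v_1, x) v_2$, one checks that $\Phi(f)\circ\mathcal{Y}_{1_{M_1\otimes M_2}} = \mathcal{Y}_f = J^{M_\chi^*}_{M_1, M_2}(f)$. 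Hence $J^{M_\chi^*}_{M_1, M_2}(f) = \varphi_\chi\circ(\xi\circ\mathcal{Y}_{1_{M_1\otimes M_2}}) = 0$, and the injectivity of $J^{M_\chi^*}_{M_1, M_2}$ supplied by Proposition \ref{Jsurjective} forces $f = 0$, contradicting $\xi\neq 0$.

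There is no serious obstacle; the whole argument is a short compatibility check once one notices that $J_{P(z); M_1, M_2}$ is defined via exactly the same universal mechanism that encodes the injectivity proof. The one item requiring verification is the identity $\Phi(f)\circ\mathcal{Y}_{1_{M_1\otimes M_2}} = \mathcal{Y}_f$, which is an immediate unwinding of how $\mathcal{Y}_g$ and the functor $\Phi$ act on morphisms.
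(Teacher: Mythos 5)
Your proof is correct and follows essentially the same approach as the paper: the core mechanism is to use the injectivity of $J^{M_3}_{M_1,M_2}$ (Proposition \ref{Jsurjective}) together with the equivalence $\Phi$ and semisimplicity of $\cC_V$ to deduce that any morphism killing the image of $J_{P(z);M_1,M_2}$ must vanish, via the identity equating $\Phi(f)\circ J_{P(z);M_1,M_2}$ with the $P(z)$-intertwining map $I_f$ (which you state at the intertwining operator level as $\Phi(f)\circ\cY_{1_{M_1\otimes M_2}}=\cY_f$). The only cosmetic differences are that you argue by contradiction and project immediately onto a simple quotient $V_\chi$, whereas the paper considers an arbitrary $F:\Phi(M_1\otimes M_2)\to W$ in $\cC$ with $F\circ J_{P(z);M_1,M_2}=0$ and factors $F$ through its image.
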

\begin{proof}
 From the definitions,
 \begin{equation*}
  \overline{\Phi(f)\circ J_{P(z); M_1,M_2}} = I_f
 \end{equation*}
for any $G$-module homomorphism $f: M_1\otimes M_2\rightarrow M_3$. Thus $\Phi(f)\circ J_{P(z); M_1,M_2} = 0$ exactly when $I_f=0$, which occurs exactly when $f=0$ by Proposition \ref{Jsurjective}.

Now suppose $F: \Phi(M_1\otimes M_2)\rightarrow W$ is any morphism in $\cC$ such that $F\circ J_{P(z); M_1,M_2}=0$. Since $\Phi(M_1\otimes M_2)$ is semisimple, the image of $F$ is a semisimple $V^G$-module in $\cC_V$ and thus isomorphic to some $\Phi(M_3)$ because $\Phi$ is an equivalence of categories. Thus
\begin{equation*}
 F =i\circ\varphi\circ\Phi(f)
\end{equation*}
where $i$ is the inclusion of the image of $F$ into $W$, $\varphi$ is an isomorphism from $\Phi(M_3)$ to the image of $W$, and $f: M_1\otimes M_2\rightarrow M_3$ is some $G$-module homomorphism. Since $i\circ\varphi$ is injective, it follows that $\Phi(f)\circ J_{P(z); M_1,M_2}=0$, and hence $f=0$ and $F=0$, showing that $J_{P(z); M_1, M_2}$ is surjective.
\end{proof}

We claim that the morphisms $J_{P(z); M_1,M_2}$ determine a natural transformation $J_{P(z)}$ from $\boxtimes_{P(z)}\circ(\Phi\times\Phi)$ to $\Phi\circ\otimes$, that is, for all morphisms $f_1: M_1\rightarrow\widetilde{M}_1$ and $f_2: M_2\rightarrow\widetilde{M}_2$ in $\rep_{A,F,\Omega}\,G$, the diagram
\begin{equation*}
 \xymatrixcolsep{4pc}
 \xymatrix{
 \Phi(M_1)\tens_{P(z)}\Phi(M_2) \ar[d]^{\Phi(f_1)\tens_{P(z)}\Phi(f_2)} \ar[r]^(.55){J_{P(z); M_1,M_2}} & \Phi(M_1\otimes M_2) \ar[d]^{\Phi(f_1\otimes f_2)} \\
 \Phi(\widetilde{M}_1)\tens_{P(z)}\Phi(\widetilde{M}_2) \ar[r]^(.55){J_{P(z); \widetilde{M}_1,\widetilde{M}_2}} & \Phi(\widetilde{M}_1\otimes\widetilde{M}_2) \\
 }
\end{equation*}
commutes. To verify this, take $w_1=\sum_i m^{(1)}_i\otimes v^{(1)}_i\in \Phi(M_1)$ and $w_2=\sum_j m^{(2)}_j\otimes v^{(2)}_j\in\Phi(M_2)$. We have
\begin{align*}
 &\overline{J_{P(z); \widetilde{M}_1,\widetilde{M}_2}\circ(\Phi(f_1)\tens_{P(z)}\Phi(f_2))}(w_1\tens_{P(z)} w_2)\nonumber\\
 &\hspace{6em}= \overline{J_{P(z); \widetilde{M}_1, \widetilde{M}_2}}\left(\left(\sum_i f_1(m^{(1)}_i)\otimes v^{(1)}_i\right)\tens_{P(z)}\left(\sum_j f_2(m^{(2)}_j)\otimes v^{(2)}_j\right)\right)\nonumber\\
 &\hspace{6em} = \sum_{i,j} \left(f_1(m^{(1)}_i)\otimes f_2(m^{(2)}_j)\right)\otimes Y\left( v^{(1)}_i, e^{\log z}\right) v^{(2)}_j\nonumber\\
 & \hspace{6em} = \overline{(f_1\otimes f_2)\otimes 1_V}\left(\sum_{i,j} (m^{(1)}_i\otimes m^{(2)}_j)\otimes Y\left(v^{(1)}_i, e^{\log z}\right)v^{(2)}_j\right)\nonumber\\
 & \hspace{6em} = \overline{\Phi(f_1\otimes f_2)\circ J_{P(z); M_1,M_2}}\left(w_1\tens_{P(z)} w_2\right),
\end{align*}
as desired.

We can actually show that $\Phi$, the natural transformations $J_{P(z)}$, and the isomorphism $\varphi_1: V^G\rightarrow\Phi(\CC)$ determine an (\textit{a priori} lax) vertex tensor functor if we give $\rep_{A,F,\Omega} G$ the following vertex tensor category structure:
\begin{itemize}
 \item All $P(z)$-tensor product functors are the usual tensor product in $\rep_{A,F,\Omega} G$.
 
 \item For a continuous path $\gamma$ in $\CC^\times$ beginning at $z_1$ and ending at $z_2$, let $p\in\ZZ$ such that $\log z_1+2\pi i p$ is the branch of logarithm at $z_1$ determined by $\log z_2$ and the path $\gamma$. Then for $M_1$ in $\rep^{\alpha_1}\,G$ and $M_2$ in $\rep^{\alpha_2}\,G$, the parallel transport isomorphism
 \begin{equation*}
  T_{\gamma; M_1, M_2}: M_1\tens_{P(z_1)} M_2 = M_1\otimes M_2 \longrightarrow M_1\tens_{P(z_2)} M_2 = M_1\otimes M_2 
 \end{equation*}
is scalar multiplication by $\left(\Omega(\alpha_1,\alpha_2)\Omega(\alpha_2,\alpha_1)\right)^{-p}$.

\item For all $z\in\CC^\times$, the $P(z)$-unit and $P(z)$-braiding isomorphisms are the unit and braiding isomorphisms in $\rep_{A,F,\Omega}\,G$.

\item For $z_1,z_2\in\CC^\times$ such that $\vert z_1\vert>\vert z_2\vert>\vert z_1-z_2\vert>0$, the $P(z_1,z_2)$-associativity isomorphism is given by
\begin{equation*}
 \cA_{P(z_1,z_2); M_1,M_2,M_3}=\cA_{z_1,z_2}(\alpha_1,\alpha_2,\alpha_3)\cA_{\rep G; M_1,M_2,M_3}
\end{equation*}
(recall the notation of Proposition \ref{abintwalgassoc}) when each $M_i$ is in $\rep^{\alpha_i}\,G$ for $i=1,2,3$.
\end{itemize}
\begin{rema}\label{RepGvrtxtobraid}
 If we apply the recipe in \cite{HLZ8} (see also Section \ref{subsec:vrtxtenscat}) for obtaining a braided tensor category from the vertex tensor category structure on $\rep_{A,F,\Omega}\,G$, we get back the braided tensor category structure on $\rep_{A,F,\Omega}\,G$. (For the associativity isomorphisms, this uses Remark \ref{abintwalgassocreal}.)
\end{rema}

Now we have:
\begin{theo}\label{PhiVrtxTens}
The triple $(\Phi,\lbrace J_{P(z)}\rbrace_{z\in\CC^\times}, \varphi_1)$ is a lax vertex tensor functor from $\rep_{A,F,\Omega} G$ to $\cC$.
\end{theo}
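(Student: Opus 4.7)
The naturality of each $J_{P(z)}$ has already been verified in the paragraph preceding the statement, so it remains to check the four compatibility diagrams \eqref{partranscompat}, \eqref{unitcompat}, \eqref{assoccompat}, and \eqref{braidcompat}. In each case my plan is to evaluate both legs on an element of the form $w_1 \boxtimes_{P(z)} w_2$, or a triple tensor product $w_1 \boxtimes_{P(z_1)}(w_2 \boxtimes_{P(z_2)} w_3)$ for associativity, with $w_r = \sum_i m_i^{(r)} \otimes v_i^{(r)} \in \Phi(M_r) = (M_r \otimes V)^G$; by Proposition \ref{Jsurjectiverema} such elements span the relevant algebraic completions. The central identity, which falls out of the universal property of the $P(z)$-tensor product together with the construction of $J_{P(z)}$, is
\begin{equation*}
 \overline{J_{P(z); M_1, M_2}}(w_1 \boxtimes_{P(z)} w_2) = \sum_{i,j}(m_i^{(1)} \otimes m_j^{(2)}) \otimes Y(v_i^{(1)}, e^{\log z}) v_j^{(2)},
\end{equation*}
and combined with the fact that $J_{P(z)}$ is a $V^G$-module homomorphism (so commutes with the Virasoro action on algebraic completions), this reduces each compatibility to an identity about vertex operators in the abelian intertwining algebra $V$.

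For parallel transport \eqref{partranscompat}, the defining formula $\overline{T_{\gamma; \Phi(M_1), \Phi(M_2)}}(w_1 \boxtimes_{P(z_1)} w_2) = \cY_{\boxtimes_{P(z_2)}}(w_1, e^{\ell(z_1)}) w_2$ converts the $V$-factor after applying $J_{P(z_2)}$ into $Y(v_i^{(1)}, e^{\ell(z_1)}) v_j^{(2)}$, and Corollary \ref{abintalgmono} produces exactly the monodromy scalar $(\Omega(\alpha_1,\alpha_2)\Omega(\alpha_2,\alpha_1))^{-p}$ by which $T_\gamma$ acts on $M_1 \otimes M_2$ in $\rep_{A, F, \Omega} G$. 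Compatibility with units \eqref{unitcompat} is immediate from the fact that $\varphi_1: V^G \to \Phi(\CC) = (\CC \otimes V)^G$ is the canonical identification $u \mapsto 1 \otimes u$ and that $V^G$ acts on $\Phi(M) \subseteq M \otimes V$ through $1_M \otimes Y$, so that both legs of the unit diagram send $u \boxtimes_{P(z)} w$ to $\sum_i m_i \otimes Y(u, e^{\log z}) v_i$. Braiding \eqref{braidcompat} reduces to Proposition \ref{abintalgskewsym}: following $J_{P(-z); M_2, M_1} \circ \cR_{P(z); \Phi(M_1), \Phi(M_2)}$ produces $e^{zL(-1)} Y(v_j^{(2)}, e^{\log z + \pi i}) v_i^{(1)}$ on the $V$-factor, and skew-symmetry rewrites this as $\Omega(\alpha_1, \alpha_2)^{-1} Y(v_i^{(1)}, e^{\log z}) v_j^{(2)}$, which is precisely the output of $\Phi(\cR_{M_1, M_2}) \circ J_{P(z); M_1, M_2}$.

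The associativity diagram \eqref{assoccompat} is the main obstacle and requires more care. One first interprets the triple tensor product $w_1 \boxtimes_{P(z_1)}(w_2 \boxtimes_{P(z_2)} w_3)$ as an element of the algebraic completion via its defining action on the graded dual, and then chases it around \eqref{assoccompat}. Using \eqref{YfromI} to convert the tensor product $P(z)$-intertwining maps at varying base points into intertwining operators evaluated at the appropriate arguments, the top-left-to-bottom-right leg of the diagram yields a $V$-factor equal to the product $Y(v_i^{(1)}, e^{\log z_1}) Y(v_j^{(2)}, e^{\log z_2}) v_k^{(3)}$, whose absolute convergence in the region $\vert z_1\vert > \vert z_2\vert > \vert z_1 - z_2\vert > 0$ is furnished by Proposition \ref{threevrtxops}; the other leg produces the iterate $Y(Y(v_i^{(1)}, e^{\log(z_1 - z_2)}) v_j^{(2)}, e^{\log z_2}) v_k^{(3)}$. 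Proposition \ref{abintwalgassoc} compares the two, and the resulting scalar is precisely $\cA_{z_1, z_2}(\alpha_1, \alpha_2, \alpha_3)$, which was built into the definition of the $P(z_1, z_2)$-associativity isomorphism in $\rep_{A, F, \Omega} G$ exactly so that this diagram commutes. Beyond this conceptual point, the main technical challenges are the careful bookkeeping of the branch integers $p_{z_1, z_2}$ and $p_{z_2, z_2 - z_1}$ appearing in Proposition \ref{abintwalgassoc} and the reinterpretation of the three-fold composition $J_{P(z_1); M_1, M_2 \otimes M_3} \circ (1_{\Phi(M_1)} \boxtimes_{P(z_1)} J_{P(z_2); M_2, M_3})$ as a single intertwining operator whose action on a pure tensor can be evaluated directly via the formula for $J$.
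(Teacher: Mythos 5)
Your overall strategy — evaluating both legs of each compatibility diagram on spanning elements $w_1\boxtimes_{P(z)} w_2$ and reducing to the associativity, skew-symmetry, and monodromy properties of $Y$ on the abelian intertwining algebra $V$ — is exactly the paper's strategy, and your treatments of parallel transport (via Corollary \ref{abintalgmono}), braiding (via Proposition \ref{abintalgskewsym}), and associativity (via Proposition \ref{abintwalgassoc}) match the paper's.

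However, your unit-compatibility argument has a gap: you only address the left unit isomorphism. You assert that "both legs of the unit diagram send $u\boxtimes_{P(z)} w$ to $\sum_i m_i\otimes Y(u,e^{\log z})v_i$," but the right unit diagram in \eqref{unitcompat} takes $w\boxtimes_{P(z)} u$ as input, not $u\boxtimes_{P(z)} w$. There the left leg gives $\overline{r_{P(z);\Phi(M)}}(w\tens_{P(z)} u)=e^{zL(-1)}Y_{\Phi(M)}(u,-z)w$, while the bottom leg passes through $\sum_i(m_i\otimes 1)\otimes Y(v_i,e^{\log z})(1\otimes u)$ and then $\Phi(r_M)$. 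Matching these is not immediate: it requires applying Proposition \ref{abintalgskewsym} (skew-symmetry for the abelian intertwining algebra, in the case $\alpha_2=0$) to rewrite $Y(v_i,e^{\log z})(1\otimes u)$ as $e^{zL(-1)}Y(1\otimes u,e^{\log z+\pi i})v_i$. Without this step the right unit compatibility does not close. A secondary point is that you invoke Proposition \ref{threevrtxops} for convergence of the product of two vertex operators in the associativity calculation, but that proposition concerns products of three vertex operators and is only needed later for the Assumption 12.2 condition in Theorem \ref{fusionruletovrtxtens}; the convergence you need here is already supplied by the discussion preceding Proposition \ref{abintwalgassoc} (ultimately \cite[Remark 12.31]{DL}). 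Likewise, the spanning fact you want is \cite[Proposition 4.23]{HLZ3}, not Proposition \ref{Jsurjectiverema}.
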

\begin{proof}
In the proof, we will use $M_i$ to represent an object of $\rep^{\alpha_i}\,G$ for $i=1,2,3$, and we will represent typical vectors in $\Phi(M_1)$, $\Phi(M_2)$, and $\Phi(M_3)$ by $w_1=\sum_i m^{(1)}_i\otimes v^{(1)}_i$, $w_2=\sum_j m^{(2)}_j\otimes v^{(2)}_j$, and $w_3=\sum_k m^{(3)}_k\otimes v^{(3)}_k$, respectively.

 First we show that $J_{P(z)}$ is compatible with parallel transport isomorphisms in the sense that the diagram \eqref{partranscompat} commutes for all continuous paths $\gamma$ in $\CC^\times$ beginning at $z_1$ and ending at $z_2$. To verify this, suppose that $\log z_1+2\pi i p$ is the branch of logarithm at $z_1$ determined by $\log z_2$ and the path $\gamma$. Then
\begin{align*}
 & \overline{J_{P(z_2); M_1,M_2}\circ T_{\gamma; \Phi(M_1),\Phi(M_2)}}(w_1\tens_{P(z_1)} w_2) = \overline{J_{P(z_2); M_1,M_2}}\left(\cY_{\tens_{P(z_2)}}(w_1, e^{\log z_1+2\pi i p})w_2\right)\nonumber\\
 &\hspace{2em} = e^{(\log z_1+2\pi i p-\log z_2)L(0)}\overline{J_{P(z_2); M_1,M_2}}\left(( e^{-(\log z_1+2\pi i p-\log z_2)L(0)} w_1)\tens_{P(z_2)} ( e^{-(\log z_1+2\pi i p-\log z_2)L(0)} w_2)\right)\nonumber\\
 &\hspace{2em} =\sum_{i,j} (m^{(1)}_i\otimes m^{(2)}_j)\otimes Y\left(v^{(1)}_i, e^{\log z_1+2\pi i p}\right)v^{(2)}_j\nonumber\\
 & \hspace{2em} = \sum_{i,j} \left(\Omega(\alpha_1,\alpha_2)\Omega(\alpha_2,\alpha_1)\right)^{-p} (m^{(1)}_i\otimes m^{(2)}_j)\otimes Y\left(v^{(1)}_i, e^{\log z_1}\right)v^{(2)}_j\nonumber\\
 &\hspace{2em} =\overline{\Phi(T_{\gamma; M_1,M_2})\circ J_{P(z_1); M_1,M_2}}(w_1\tens_{P(z_1)} w_2),
\end{align*}
using Corollary \ref{abintalgmono} and the definition of $T_{\gamma; M_1,M_2}$.

Next we need to show that $J_{P(z)}$ is compatible with $\varphi_1$ and the unit isomorphisms in the sense that the diagrams \eqref{unitcompat} commute for any $M$ in $\rep_{A,F,\Omega} G$. To prove this, take $w=\sum_i m_i\otimes v_i\in\Phi(M)$ and $u\in V^G$. Then we have
\begin{align*}
 \overline{\Phi(l_M)\circ J_{P(z);\CC,M}\circ(\varphi_1\tens_{P(z)} 1_{\Phi(M)})}(u\tens_{P(z)} w) & = \overline{\Phi(l_M)\circ J_{P(z);\CC,M}}\left((1\otimes(1\otimes u))\tens_{P(z)} \sum_i m_i\otimes v_i\right)\nonumber\\
 &\hspace{-5em} =\overline{\Phi(l_M)}\left(\sum_{i} (1\otimes m_i)\otimes Y(1\otimes u, e^{\log z})v_i\right)=\sum_i m_i\otimes Y(u,z)v_i\nonumber\\
 &\hspace{-5em}= Y_{\Phi(M)} (u, z)w =\overline{l_{P(z); \Phi(m)}}(u\tens_{P(z)} w),
\end{align*}
where we can write $e^{\log z}=z$ inside $Y$ because $Y(u,x)$ contains only integral powers of $x$ when $u\in V^G\subseteq V^0$.
We also have
\begin{align*}
 \overline{\Phi(r_M)\circ J_{P(z); M, \CC}\circ(1_{\Phi(M)}\tens_{P(z)} \varphi_1)} & (w\tens_{P(z)} u)  = \overline{\Phi(r_M)\circ J_{P(z); M, \CC}}\left(\left(\sum_i m_i\otimes v_i\right)\tens_{P(z)}(1\otimes(1\otimes u))\right)\nonumber\\
 &\hspace{-6em} =\overline{\Phi(r_M)}\left(\sum_i (m_i\otimes 1)\otimes Y(v_i, e^{\log z})(1\otimes u)\right) = \sum_i m_i\otimes e^{z L(-1)} Y(u, e^{\log z+\pi i})v_i\nonumber\\
 & \hspace{-6em} =e^{z L(-1)} Y_{\Phi(M)}(u, -z)w = \overline{r_{P(z); \Phi(M)}}(w\tens_{P(z)} u),
\end{align*}
where we have used Proposition \ref{abintalgskewsym} in the case $\alpha_2=0$.

Now we show that $J_{P(z)}$ is compatible with the $P(z)$-braiding isomorphisms in $\rep_{A,F,\Omega} G$ and $\cC$, that is, the diagram \eqref{braidcompat} commutes. To prove this, we calculate
\begin{align*}
 \overline{J_{P(-z); M_2,M_1}\circ\cR_{P(z);\Phi(M_1),\Phi(M_2)}}& (w_1\tens_{P(z)} w_2) = \overline{J_{P(-z); M_2,M_1}}\left(e^{z L(-1)}\cY_{\tens_{P(-z)}}(w_2, e^{\log z+\pi i})w_1\right)\nonumber\\
 & \hspace{-12em} = e^{zL(-1)} e^{(\log z+\pi i-\log(-z))L(0)}\cdot\nonumber\\
 &\hspace{-8em}\cdot\overline{J_{P(-z);M_2, M_1}}\left(( e^{-(\log z+\pi i-\log(-z))L(0)}w_2)\tens_{P(-z)}( e^{-(\log z+\pi i-\log(-z))L(0)}w_1)\right)\nonumber\\
 & \hspace{-12em} =\sum_{i,j} (m^{(2)}_j\otimes m^{(1)}_i)\otimes e^{z L(-1)} Y(v^{(2)}_j, e^{\log z+\pi i})v^{(1)}_i=\sum_{i,j} \Omega(\alpha_1,\alpha_2)^{-1}(m^{(2)}_j\otimes m^{(1)}_i)\otimes Y(v^{(1)}_i, e^{\log z})v^{(2)}_j\nonumber\\
 & \hspace{-12em} =\overline{\Phi(\cR_{M_1,M_2})}\left(\sum_{i,j} (m^{(1)}_i\otimes m^{(2)}_j)\otimes Y(v^{(1)}_i, e^{\log z})v^{(2)}_j\right) =  \overline{\Phi(\cR_{M_1,M_2})\circ J_{P(z); M_1,M_2}}(w_1\tens_{P(z)} w_2),
\end{align*}
where we have used skew-symmetry of $Y$, Proposition \ref{abintalgskewsym}.

Finally, we have to show that $J_{P(z)}$ is compatible with the $P(z_1,z_2)$-associativity isomorphisms in $\rep_{A,F,\Omega}G$ and $\cC$, that is, the diagram \eqref{assoccompat} commutes. Here, the calculation uses Proposition \ref{abintwalgassoc}:
 \begin{align*}
  &\overline{J_{P(z_2); M_1\otimes M_2, M_3}\circ(J_{P(z_1-z_2);M_1,M_2}\tens_{P(z_2)} 1_{\Phi(M_3)})\circ\cA_{P(z_1,z_2);\Phi(M_1),\Phi(M_2),\Phi(M_3)}}(w_1\tens_{P(z_1)}(w_2\tens_{P(z_2)} w_3))\nonumber\\
  &\hspace{2.5em} =\overline{J_{P(z_2); M_1\otimes M_2, M_3}\circ(J_{P(z_1-z_2);M_1,M_2}\tens_{P(z_2)} 1_{\Phi(M_3)})}((w_1\tens_{P(z_1-z_2)} w_2)\tens_{P(z_2)} w_3)\nonumber\\
  &\hspace{2.5em} =\overline{J_{P(z_2); M_1\otimes M_2, M_3}}\left(\left(\sum_{i,j} (m^{(1)}_i\otimes m^{(2)}_j)\otimes Y(v^{(1)}_i, e^{\log(z_1-z_2)})v^{(2)}_j\right)\tens_{P(z_2)} w_3\right)\nonumber\\
  &\hspace{2.5em} =\sum_{i,j,k} ((m^{(1)}_i\otimes m^{(2)}_j)\otimes m^{(3)}_k)\otimes Y\left(Y\left(v^{(1)}_i, e^{\log(z_1-z_2)}\right)v^{(2)}_j, e^{\log z_2}\right)v^{(3)}_k\nonumber\\
  &\hspace{2.5em} =\sum_{i,j,k} \cA_{z_1,z_2}(\alpha_1,\alpha_2,\alpha_3)\left((m^{(1)}_i\otimes m^{(2)}_j)\otimes m^{(3)}_k\right)\otimes Y(v^{(1)}_i, e^{\log z_1})Y(v^{(2)}_j, e^{\log z_2})v^{(3)}_k\nonumber\\
  &\hspace{2.5em} =\overline{\Phi(\cA_{P(z_1,z_2); M_1,M_2,M_3})}\left(\sum_{i,j,k} \left(m^{(1)}_i\otimes(m^{(2)}_j\otimes m^{(3)}_k)\right)\otimes Y(v^{(1)}_i, e^{\log z_1})Y(v^{(2)}_j, e^{\log z_2})v^{(3)}_k\right)\nonumber\\
  &\hspace{2.5em} =\overline{\Phi(\cA_{P(z_1,z_2); M_1,M_2,M_3})\circ J_{P(z_1); M_1,M_2\otimes M_3}}\left(w_1\tens_{P(z_1)}\sum_{j,k} (m^{(2)}_j\otimes m^{(3)}_k)\otimes Y(v^{(2)}_j, e^{\log z_2})v^{(3)}_k\right)\nonumber\\
  &\hspace{2.5em} =\overline{\Phi(\cA_{P(z_1,z_2); M_1,M_2,M_3})\circ J_{P(z_1); M_1,M_2\otimes M_3}\circ(1_{\Phi(M_1)}\tens_{P(z_1)} J_{P(z_2); M_2,M_3})}(w_1\tens_{P(z_1)}(w_2\tens_{P(z_2)} w_3)),
 \end{align*}
as desired. This completes the proof that $(\Phi, \lbrace J_{P(z)}\rbrace_{z\in\CC^\times}, \varphi_1)$ is a (possibly lax) vertex tensor functor in the sense of \cite{CKM}.
\end{proof}

Now the previous theorem, Theorem \ref{vrtxtobraidfunctor}, and Remark \ref{RepGvrtxtobraid} immediately yield:
\begin{corol}\label{PhiLaxTens}
 The triple $(\Phi,J,\varphi_1)$ is a lax braided tensor functor from $\rep_{A,F,\Omega} G$ to $\cC$.
\end{corol}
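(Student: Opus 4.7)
The plan is to obtain this corollary as a direct application of the machinery set up earlier, with essentially no new calculations required. By Theorem \ref{PhiVrtxTens}, the triple $(\Phi, \lbrace J_{P(z)}\rbrace_{z\in\CC^\times}, \varphi_1)$ is already known to be a lax vertex tensor functor from $\rep_{A,F,\Omega}\,G$ (equipped with the vertex tensor category structure specified just before that theorem) to $\cC$. Thus the natural move is to feed this into Theorem \ref{vrtxtobraidfunctor}, which takes any lax vertex tensor functor and outputs a lax braided tensor functor, with underlying natural transformation $J = J_{P(1)}$ and unit morphism $\varphi_1$.

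The only subtlety is to make sure that the braided tensor category structure on the source, obtained by applying the Huang-Lepowsky-Zhang recipe to the vertex tensor structure on $\rep_{A,F,\Omega}\,G$, matches the braided tensor category $\rep_{A,F,\Omega}\,G$ that appears in the statement. This is precisely the content of Remark \ref{RepGvrtxtobraid}: the associativity isomorphisms induced from the $P(r_1,r_2)$-associativity isomorphisms with $r_1 > r_2 > r_1 - r_2 > 0$ reduce, via Remark \ref{abintwalgassocreal}, to multiplication by $F(\alpha_1,\alpha_2,\alpha_3)^{-1}$, and the parallel transport isomorphisms $T_{-1 \to 1}$ composed with the $P(1)$-braiding give back the braiding $\cR_{M_1,M_2}$ of $\rep_{A,F,\Omega}\,G$.

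On the target side, $\cC$ is a vertex tensor category by hypothesis, so the induced braided tensor structure on $\cC$ is the standard one, and $J = J_{P(1)}$ with respect to it. Assembling the three inputs, Theorem \ref{vrtxtobraidfunctor} produces the desired lax braided tensor functor $(\Phi, J, \varphi_1) : \rep_{A,F,\Omega}\,G \to \cC$. There is no real obstacle here; the corollary is stated separately only to package together the main theorem of the subsection in the language that will be used in subsequent sections, and the proof is a one-line citation of the three preceding results.
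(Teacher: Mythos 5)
Your proposal matches the paper's proof exactly: the corollary is obtained by combining Theorem \ref{PhiVrtxTens}, Theorem \ref{vrtxtobraidfunctor}, and Remark \ref{RepGvrtxtobraid}, precisely the three inputs you cite and in the same role you assign each of them. No gaps, and the route is the same.
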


Now we can complete the proof that $\Phi$ gives a braided tensor equivalence by showing that $J_{M_1,M_2}$ is injective for any $G$-modules $M_1$, $M_2$ in $\rep_{A,F,\Omega} G$. The proof is categorical and heavily uses compatibility of $J$ with associativity and unit isomorphisms:
\begin{theo}\label{Jinjective}
 For any $G$-modules $M_1$, $M_2$ in $\rep_{A,F,\Omega} G$, the homomorphism $J_{M_1,M_2}: \Phi(M_1)\tens\Phi(M_2)\rightarrow\Phi(M_1\otimes M_2)$ is injective.
\end{theo}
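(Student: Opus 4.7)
The plan is to establish injectivity of $J_{M_1,M_2}$ by constructing, for each $M$ in $\rep_{A,F,\Omega}\,G$, enough of a rigidity structure on $\Phi(M)$ in $\cC$ so that the snake identity in $\rep_{A,F,\Omega}\,G$, pushed through $\Phi$, forces $J_{M_1,M_2}$ to be a monomorphism. Throughout I use only the compatibility of $J$ with associativity and unit isomorphisms (diagrams \eqref{assoccompat} and \eqref{unitcompat}), in line with the author's remark preceding the statement.

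\textbf{Step 1} (the unit cases). The diagrams \eqref{unitcompat} express $l_{\Phi(M)}$ and $r_{\Phi(M)}$ as $\Phi(l_M)\circ J_{\unit,M}\circ(\varphi_1\tens_{P(1)} 1)$ and $\Phi(r_M)\circ J_{M,\unit}\circ(1\tens_{P(1)}\varphi_1)$ respectively. Since $\Phi(l_M)$, $\Phi(r_M)$, $\varphi_1$, and the braided-tensor unit isomorphisms are isomorphisms, I conclude that $J_{\unit,M}$ and $J_{M,\unit}$ are themselves isomorphisms for every $M$. This will serve as the base case for invoking the snake identity.

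\textbf{Step 2} (partial evaluation). For each $M$, define
\[
 \tilde{e}_{M} := \varphi_1^{-1}\circ\Phi(e_M)\circ J_{M^*, M}:\Phi(M^*)\boxtimes\Phi(M)\longrightarrow V^G,
\]
using the rigid structure $e_M: M^*\otimes M\to\unit$ of $\rep_{A,F,\Omega}\,G$. This is the part of ``rigidity'' in $\cC$ that is cheap to write down because it goes in the direction $J$ already knows how to push.

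\textbf{Step 3} (the snake, pushed through $\Phi$). Apply $\Phi$ to the rigidity identity $r_M\circ(1\otimes e_M)\circ\cA\circ(i_M\otimes 1)\circ l_M^{-1}=1_M$ tensored on the right with $1_{M_2}$. This yields $1_{\Phi(M_1\otimes M_2)}$ as a composition of $\Phi$ applied to morphisms in $\rep_{A,F,\Omega}\,G$. Using the associativity and unit compatibility of $J$ (diagrams \eqref{assoccompat} and \eqref{unitcompat}), I rewrite each occurrence of $\Phi(\cA)$, $\Phi(l_M)$, $\Phi(r_M)$, $\Phi(i_M)$, and $\Phi(e_M)$ in terms of morphisms in $\cC$ involving $\tens$, $J_{P(z)}$, and $\tilde{e}_{M_1}$. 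The naturality of $J$ allows the coevaluation contribution $\Phi(i_{M_1})\circ\varphi_1:V^G\to\Phi(M_1\otimes M_1^*)$ to be handled by passing it through $J_{M_1\otimes M_1^*,\,M_1\otimes M_2}$ \emph{before} needing to undo any $J$, so that no non-invertible $J$ ever has to be inverted.

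\textbf{Step 4} (injectivity). After this bookkeeping, the rewritten snake becomes a factorization in $\cC$ of $1_{\Phi(M_1)\tens\Phi(M_2)}$ through $J_{M_1,M_2}$: there is a morphism $s_{M_1,M_2}:\Phi(M_1\otimes M_2)\to\Phi(M_1)\tens\Phi(M_2)$ with $s_{M_1,M_2}\circ J_{M_1,M_2}=1_{\Phi(M_1)\tens\Phi(M_2)}$. In particular $J_{M_1,M_2}$ is split monic, hence injective. Combined with Proposition \ref{Jsurjectiverema}, this shows $J_{M_1,M_2}$ is an isomorphism.

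\textbf{Main obstacle.} The central difficulty is Step 3: we do not have a coevaluation $V^G\to\Phi(M_1)\tens\Phi(M_1^*)$ in $\cC$ because $J_{M_1,M_1^*}$, while surjective, is not known a priori to be invertible. The trick is to \emph{never} introduce a would-be inverse of $J_{M_1,M_1^*}$ into the chain of morphisms; instead the coevaluation enters only as $\Phi(i_{M_1})\circ\varphi_1$ landing in $\Phi(M_1\otimes M_1^*)$, and one subsequently applies $J_{M_1\otimes M_1^*,\,M_1\otimes M_2}$ (going the correct way) together with $\Phi$ of the remaining snake moves. The compatibility diagram \eqref{assoccompat} then telescopes these $J$'s with $1_{\Phi(M_1)}\tens J_{M_1^*,\,M_1\otimes M_2}$ and a single application of $J_{\unit,M_2}^{-1}$ from Step 1 (an honest isomorphism), producing the section $s_{M_1,M_2}$. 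Verifying that this telescoping really recovers $s_{M_1,M_2}\circ J_{M_1,M_2}=1$ is the main diagrammatic labor.
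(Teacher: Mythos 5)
There is a genuine gap in Steps 3 and 4, and it is exactly the obstacle you flag but do not actually circumvent. You claim to ``rewrite each occurrence of $\Phi(\cA)$, $\Phi(l_M)$, $\Phi(r_M)$, \ldots\ in terms of morphisms in $\cC$,'' but the compatibility diagram \eqref{assoccompat} only gives the relation $J\circ(J\tens 1)\circ\cA_{\cC} = \Phi(\cA_{\rep G})\circ J\circ(1\tens J)$; isolating $\Phi(\cA)$ on one side forces you to precompose with $\bigl(J\circ(1\tens J)\bigr)^{-1}$, which is precisely an inverse of the $J$'s whose invertibility is at stake. Tracking the composite you describe in Step 3 shows that after $\Phi(i_{M_1})\circ\varphi_1$ and $J_{M_1\otimes M_1^*,\,M_1\otimes M_2}$, every remaining arrow lives inside the image of $\Phi$, so the entire chain lands back in $\Phi(M_1\otimes M_2)$ --- and you simply recover $1_{\Phi(M_1\otimes M_2)}$, not a morphism into $\Phi(M_1)\tens\Phi(M_2)$. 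To produce the section $s_{M_1,M_2}$ you assert in Step 4, you would at some stage need to pass from an object of the form $\Phi(N_1\otimes N_2)$ back to $\Phi(N_1)\tens\Phi(N_2)$ with $N_1,N_2$ both nontrivial; the only $J$'s you have established are invertible (Step 1) are $J_{\unit, M}$ and $J_{M,\unit}$, and those never yield such a move. So the claimed split monicity is not established, and in fact there is no obvious way to write down a section before one already knows $J_{M_1,M_2}$ is an isomorphism.

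The paper's proof takes a genuinely different route that sidesteps the need for any ``coevaluation in $\cC$.'' It introduces $K=\ker J_{M_1,M_2}$ with inclusion $k$, and builds a single auxiliary morphism $F\colon \Phi(M_1\otimes M_1^*)\tens K\to\Phi(M_1)\tens\Phi(M_2)$ using $1\tens k$, the associativity isomorphism in $\cC$, $J_{M_1\otimes M_1^*,M_1}\tens 1$, and then $\Phi$ applied to the $\rep_{A,F,\Omega}G$-rigidity data $\cA^{-1}_{M_1,M_1^*,M_1}$, $1\otimes e_{M_1}$, $r_{M_1}$. The two claims then give two distinct precompositions of $F$: precomposing via $\Phi(i_{M_1})$ and the unit morphisms, the $\rep G$ snake identity and unit/associativity compatibility of $J$ collapse the composite to $k$ (Claim~1); precomposing via $J_{M_1,M_1^*}\tens 1_K$, the naturality and \eqref{assoccompat}/\eqref{unitcompat} compatibilities telescope the $J$'s until a factor $J_{M_1,M_2}\circ k = 0$ appears (Claim~2). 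The decisive extra ingredient --- which your sketch does not have and cannot replace by a section argument --- is that $\cdot\tens K$ is right exact, so the surjectivity of $J_{M_1,M_1^*}$ established earlier makes $J_{M_1,M_1^*}\tens 1_K$ surjective; hence Claim~2 forces $F=0$, and then Claim~1 forces $k=0$, i.e.\ $K=0$. That right-exactness argument is what replaces the missing coevaluation, and it is the idea your proposal needs but omits.
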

\begin{proof}
 Let $K$ be the kernel of $J_{M_1, M_2}$ and let $k: K\rightarrow\Phi(M_1)\tens\Phi(M_2)$ be the canonical embedding. Consider the $V^G$-module homomorphism $F$ defined by the composition
 \begin{align*}
  \Phi & (M_1\otimes M_1^*)  \tens K\xrightarrow{1_{\Phi(M_1\otimes M_1^*)}\tens k} \Phi(M_1\otimes M_1^*)\tens(\Phi(M_1)\tens\Phi(M_2))\nonumber\\
  & \xrightarrow{\cA_{\Phi(M_1\otimes M_1^*),\Phi(M_1),\Phi(M_2)}} (\Phi(M_1\otimes M_1^*)\tens\Phi(M_1))\tens\Phi(M_2)\xrightarrow{J_{M_1\otimes M_1^*,M_1}\tens 1_{\Phi(M_2)}} \Phi((M_1\otimes M_1^*)\otimes M_1)\boxtimes\Phi(M_2)\nonumber\\
 & \xrightarrow{\Phi(\cA_{M_1,M_1^*,M_1}^{-1})\tens 1_{\Phi(M_2)}} \Phi(M_1\otimes(M_1^*\otimes M_1))\tens\Phi(M_2)\xrightarrow{\Phi(1_{M_1}\otimes e_{M_1})\tens 1_{\Phi(M_2)}} \Phi(M_1\otimes\CC)\tens\Phi(M_2)\nonumber\\
 &\xrightarrow{\Phi(r_{M_1})\tens 1_{\Phi(M_2)}} \Phi(M_1)\tens\Phi(M_2).
 \end{align*}
The theorem will follow from the following two claims:

\bigskip

\textbf{Claim 1:} $F\circ(\Phi(i_{M_1})\tens 1_K)\circ(\varphi_1\tens 1_K)\circ l_{K}^{-1} =k$.

\bigskip

\textbf{Claim 2:} $F\circ(J_{M_1,M_1^*}\tens 1_K)=0$.

\bigskip

\noindent For, since we have already proved $J_{M_1, M_1^*}$ is surjective, and since the functor $\cdot\tens K$ is right exact, $J_{M_1,M_1^*}\tens 1_K$ is surjective and Claim 2 implies $F=0$. Then Claim 1 will imply $k=0$, and hence $K=0$.

Now to verify Claim 1, by the naturality of the left unit isomorphisms in $\cC$, the indicated composition is
\begin{align*}
& K\xrightarrow{k}  \Phi(M_1)\tens\Phi(M_2)\xrightarrow{l_{\Phi(M_1)\tens\Phi(M_2)}^{-1}} V^G\tens(\Phi(M_1)\tens\Phi(M_2))\xrightarrow{\varphi_1\tens 1_{\Phi(M_1)\tens\Phi(M_2)}} \Phi(\CC)\tens(\Phi(M_1)\tens\Phi(M_2))\nonumber\\
 & \xrightarrow{\Phi(i_{M_1})\tens 1_{\Phi(M_1)\tens\Phi(M_2)}} \Phi(M_1\tens M_1^*)\tens(\Phi(M_1)\tens\Phi(M_2)) \xrightarrow{\cA_{\Phi(M_1\otimes M_1^*),\Phi(M_1),\Phi(M_2)}} (\Phi(M_1\otimes M_1^*)\tens\Phi(M_1))\tens\Phi(M_2)\nonumber\\
 &\xrightarrow{J_{M_1\otimes M_1^*,M_1}\tens 1_{\Phi(M_2)}} \Phi((M_1\otimes M_1^*)\otimes M_1)\tens\Phi(M_2) \xrightarrow{\Phi(\cA_{M_1,M_1^*,M_1}^{-1})\tens 1_{\Phi(M_2)}} \Phi(M_1\otimes(M_1^*\otimes M_1))\tens\Phi(M_2)\nonumber\\
 &\xrightarrow{\Phi(1_{M_1}\otimes e_{M_1})\tens 1_{\Phi(M_2)}} \Phi(M_1\otimes\CC)\tens\Phi(M_2) \xrightarrow{\Phi(r_{M_1})\tens 1_{\Phi(M_2)}} \Phi(M_1)\tens\Phi(M_2).
\end{align*}
Next we can use the naturality of the associativity isomorphisms and the triangle in $\cC$ (see for instance \cite[Lemma XI.2.2]{Ka}) to rewrite this composition as
\begin{align*}
 & K\xrightarrow{k} \Phi(M_1)\tens\Phi(M_2)\xrightarrow{l_{\Phi(M_1)}^{-1}\tens 1_{\Phi(M_2)}} (V^G\tens\Phi(M_1))\tens\Phi(M_2)\xrightarrow{(\varphi_1\tens 1_{\Phi(M_1)})\tens 1_{\Phi(M_2)}} (\Phi(\CC)\tens\Phi(M_1))\tens\Phi(M_2)\nonumber\\
 & \xrightarrow{(\Phi(i_{M_1})\tens 1_{\Phi(M_1)})\tens 1_{\Phi(M_2)}} (\Phi(M_1\otimes M_1^*)\tens\Phi(M_1))\tens\Phi(M_2) \xrightarrow{J_{M_1\otimes M_1^*,M_1}\tens 1_{\Phi(M_2)}} \Phi((M_1\otimes M_1^*)\otimes M_1)\otimes\Phi(M_2)\nonumber\\
 & \xrightarrow{\Phi(\cA_{M_1,M_1^*,M_1}^{-1})\tens 1_{\Phi(M_2)}} \Phi(M_1\otimes(M_1^*\otimes M_1))\tens\Phi(M_2)\xrightarrow{\Phi(1_{M_1}\otimes e_{M_1})\tens 1_{\Phi(M_2)}} \Phi(M_1\otimes\CC)\tens\Phi(M_2)\nonumber\\
 &\xrightarrow{\Phi(r_{M_1})\tens 1_{\Phi(M_2)}} \Phi(M_1)\tens\Phi(M_2).
\end{align*}
Now the naturality of $J$ implies that
\begin{equation*}
 J_{M_1\otimes M_1^*, M_1}\circ(\Phi(i_{M_1})\tens 1_{\Phi(M_1)})=\Phi(i_{M_1}\otimes 1_{M_1})\circ J_{\CC, M_1}.
\end{equation*}
Moreover, the compatibility of $\varphi_1$ and $J$ with left unit isomorphisms shows that we get the composition
\begin{align*}
 &K\xrightarrow{k}\Phi(M_1)\tens\Phi(M_2)\xrightarrow{\Phi(l_{M_1}^{-1})\tens 1_{\Phi(M_2)}} \Phi(\CC\tens M_1)\tens\Phi(M_2)\xrightarrow{\Phi(i_{M_1}\tens 1_{M_1})\tens 1_{\Phi(M_2)}} \Phi((M_1\otimes M_1^*)\otimes M_1)\tens\Phi(M_2)\nonumber\\
 & \xrightarrow{\Phi(\cA_{M_1,M_1^*,M_1}^{-1})\tens 1_{\Phi(M_2)}} \Phi(M_1\otimes(M_1^*\otimes M_1))\tens\Phi(M_2)\xrightarrow{\Phi(1_{M_1}\otimes e_{M_1})\tens 1_{\Phi(M_2)}} \Phi(M_1\otimes\CC)\tens\Phi(M_2)\nonumber\\
 &\xrightarrow{\Phi(r_{M_1})\tens 1_{\Phi(M_2)}} \Phi(M_1)\tens\Phi(M_2).
\end{align*}
Now the rigidity of $M_1$ in $\rep_{A,F,\Omega}\,G$ implies that the entire composition collapses to $k$, completing the proof of Claim 1.

Now for Claim 2, we are considering the composition
\begin{align*}
 & (\Phi(M_1)\tens\Phi(M_1^*))\tens K\xrightarrow{J_{M_1,M_1^*}\tens k}\Phi(M_1\tens M_1^*)\tens(\Phi(M_1)\tens\Phi(M_2))\nonumber\\
 & \xrightarrow{\cA_{\Phi(M_1\otimes M_1^*),\Phi(M_1),\Phi(M_2)}} (\Phi(M_1\otimes M_1^*)\tens\Phi(M_1))\tens\Phi(M_2)\xrightarrow{J_{M_1\otimes M_1^*,M_1}\tens 1_{\Phi(M_2)}} \Phi((M_1\otimes M_1^*)\otimes M_1)\tens\Phi(M_2)\nonumber\\
 & \xrightarrow{\Phi(\cA_{M_1,M_1^*,M_1}^{-1})\tens 1_{\Phi(M_2)}} \Phi(M_1\otimes(M_1^*\otimes M_1))\tens\Phi(M_2)\xrightarrow{\Phi(1_{M_1}\otimes e_{M_1})\tens 1_{\Phi(M_2)}} \Phi(M_1\otimes\CC)\tens\Phi(M_2)\nonumber\\
 &\xrightarrow{\Phi(r_{M_1})\tens 1_{\Phi(M_2)}} \Phi(M_1)\tens\Phi(M_2).
\end{align*}
The idea is to use compatibility of $J$ with the associativity and unit isomorphisms to move the $J_{M_1,M_1^*}$ at the beginning on the left over to the right. The naturality of the associativity isomorphism in $\cC$ implies that our composition is
\begin{align*}
 & (\Phi(M_1)\tens\Phi(M_1^*))\tens K\xrightarrow{1_{\Phi(M_1)\tens\Phi(M_1^*)}\tens k} (\Phi(M_1)\tens\Phi(M_1^*))\tens(\Phi(M_1)\tens\Phi(M_2))\nonumber\\
 & \xrightarrow{\cA_{\Phi(M_1)\tens\Phi(M_1^*),\Phi(M_1),\Phi(M_2)}} ((\Phi(M_1)\tens\Phi(M_1^*))\tens\Phi(M_1))\tens\Phi(M_2)\nonumber\\
 & \xrightarrow{(J_{M_1,M_1^*}\tens 1_{\Phi(M_1)})\tens 1_{\Phi(M_2)}} (\Phi(M_1\otimes M_1^*)\tens\Phi(M_1))\tens\Phi(M_2) \xrightarrow{J_{M_1\otimes M_1^*, M_1}\tens 1_{\Phi(M_2)}} \Phi((M_1\otimes M_1^*)\tens M_1)\tens\Phi(M_2)\nonumber\\
 & \xrightarrow{\Phi(\cA_{M_1,M_1^*,M_1}^{-1})\tens 1_{\Phi(M_2)}} \Phi(M_1\otimes(M_1^*\otimes M_1))\tens\Phi(M_2)\xrightarrow{\Phi(1_{M_1}\otimes e_{M_1})\tens 1_{\Phi(M_2)}} \Phi(M_1\otimes\CC)\tens\Phi(M_2)\nonumber\\
 &\xrightarrow{\Phi(r_{M_1})\tens 1_{\Phi(M_2)}} \Phi(M_1)\tens\Phi(M_2).
\end{align*}
Now by the compatibility of $J$ with the associativity isomorphisms, we can rewrite the third, fourth, and fifth arrows above:
\begin{align*}
 & (\Phi(M_1)\tens\Phi(M_1^*))\tens K\xrightarrow{1_{\Phi(M_1)\tens\Phi(M_1^*)}\tens k} (\Phi(M_1)\tens\Phi(M_1^*))\tens(\Phi(M_1)\tens\Phi(M_2))\nonumber\\
 & \xrightarrow{\cA_{\Phi(M_1)\tens\Phi(M_1^*),\Phi(M_1),\Phi(M_2)}} ((\Phi(M_1)\tens\Phi(M_1^*))\tens\Phi(M_1))\tens\Phi(M_2)\nonumber\\
 & \xrightarrow{\cA_{\Phi(M_1),\Phi(M_1^*),\Phi(M_1)}^{-1}\tens 1_{\Phi(M_2)}} (\Phi(M_1)\tens(\Phi(M_1^*)\tens\Phi(M_1)))\tens\Phi(M_2)\nonumber\\ & \xrightarrow{(1_{\Phi(M_1)}\tens J_{M_1^*, M_1})\tens 1_{\Phi(M_2)}} (\Phi(M_1)\tens\Phi(M_1^*\otimes M_1))\tens\Phi(M_2) \xrightarrow{J_{M_1,M_1^*\otimes M_1}\tens 1_{\Phi(M_2)}} \Phi(M_1\otimes(M_1^*\otimes M_1))\tens\Phi(M_2)\nonumber\\
 & \xrightarrow{\Phi(1_{M_1}\otimes e_{M_1})\tens 1_{\Phi(M_2)}} \Phi(M_1\otimes\CC)\tens\Phi(M_2) \xrightarrow{\Phi(r_{M_1})\tens 1_{\Phi(M_2)}} \Phi(M_1)\tens\Phi(M_2).
\end{align*}
Next, the naturality of $J$ implies that
\begin{equation*}
 \Phi(1_{M_1}\tens e_{M_1})\circ J_{M_1,M_1^*\otimes M_1}= J_{M_1,\CC}\circ(1_{\Phi(M_1)}\tens\Phi(e_{M_1}));
\end{equation*}
using this plus the compatibility of $J$ and $\varphi_1$ with the right unit isomorphisms, we get
\begin{align*}
 & (\Phi(M_1)\tens\Phi(M_1^*))\tens K\xrightarrow{1_{\Phi(M_1)\tens\Phi(M_1^*)}\tens k} (\Phi(M_1)\tens\Phi(M_1^*))\tens(\Phi(M_1)\tens\Phi(M_2))\nonumber\\
 & \xrightarrow{\cA_{\Phi(M_1)\tens\Phi(M_1^*),\Phi(M_1),\Phi(M_2)}} ((\Phi(M_1)\tens\Phi(M_1^*))\tens\Phi(M_1))\tens\Phi(M_2)\nonumber\\
 & \xrightarrow{\cA_{\Phi(M_1),\Phi(M_1^*),\Phi(M_1)}^{-1}\tens 1_{\Phi(M_2)}} (\Phi(M_1)\tens(\Phi(M_1^*)\tens\Phi(M_1)))\tens\Phi(M_2)\nonumber\\ & \xrightarrow{(1_{\Phi(M_1)}\tens J_{M_1^*, M_1})\tens 1_{\Phi(M_2)}} (\Phi(M_1)\tens\Phi(M_1^*\otimes M_1))\tens\Phi(M_2)\xrightarrow{(1_{\Phi(M_1)}\tens\Phi(e_{M_1}))\tens 1_{\Phi(M_2)}} (\Phi(M_1)\tens\Phi(\CC))\tens\Phi(M_2)\nonumber\\
 & \xrightarrow{(1_{\Phi(M_1)}\tens\varphi_1^{-1})\tens 1_{\Phi(M_2)}} (\Phi(M_1)\tens V^G)\tens\Phi(M_2)\xrightarrow{r_{\Phi(M_1)}\tens 1_{\Phi(M_2)}} \Phi(M_1)\tens\Phi(M_2).
\end{align*}
Now by the triangle axiom in $\cC$,
\begin{equation*}
 r_{\Phi(M_1)}\tens 1_{\Phi(M_2)}= (1_{\Phi(M_1)}\tens l_{\Phi(M_2)})\circ\cA^{-1}_{\Phi(M_1),V^G,\Phi(M_2)};
\end{equation*}
then by naturality we can move this new associativity isomorphism back over three arrows in our composition and rewrite the resulting composition of three associativity isomorphisms as a composition of two using the pentagon identity:
\begin{align*}
 & (\Phi(M_1)\tens\Phi(M_1^*))\tens K\xrightarrow{1_{\Phi(M_1)\tens\Phi(M_1^*)}\tens k} (\Phi(M_1)\tens\Phi(M_1^*))\tens(\Phi(M_1)\tens\Phi(M_2))\nonumber\\
 & \xrightarrow{\cA^{-1}_{\Phi(M_1),\Phi(M_1^*),\Phi(M_1)\tens\Phi(M_2)}} \Phi(M_1)\tens(\Phi(M_1^*)\tens(\Phi(M_1)\tens\Phi(M_2)))\nonumber\\
 & \xrightarrow{1_{\Phi(M_1)}\tens\cA_{\Phi(M_1^*),\Phi(M_1),\Phi(M_2)}} \Phi(M_1)\tens((\Phi(M_1^*)\tens\Phi(M_1))\tens\Phi(M_2))\nonumber\\
 & \xrightarrow{1_{\Phi(M_1)}\tens (J_{M_1^*, M_1}\tens 1_{\Phi(M_2)})} \Phi(M_1)\tens(\Phi(M_1^*\otimes M_1)\tens\Phi(M_2))\xrightarrow{1_{\Phi(M_1)}\tens(\Phi(e_{M_1})\tens 1_{\Phi(M_2)})} \Phi(M_1)\tens(\Phi(\CC)\tens\Phi(M_2))\nonumber\\
 & \xrightarrow{1_{\Phi(M_1)}\tens(\varphi_1^{-1}\tens 1_{\Phi(M_2)})} \Phi(M_1)\tens (V^G\tens\Phi(M_2))\xrightarrow{1_{\Phi(M_1)}\tens l_{\Phi(M_2)}} \Phi(M_1)\tens\Phi(M_2).
\end{align*}
We again use the compatibility of $J$ and $\varphi_1$ with now the left unit isomorphisms and the naturality of $J$ to rewrite the last three arrows above:
\begin{align*}
 & (\Phi(M_1)\tens\Phi(M_1^*))\tens K\xrightarrow{1_{\Phi(M_1)\tens\Phi(M_1^*)}\tens k} (\Phi(M_1)\tens\Phi(M_1^*))\tens(\Phi(M_1)\tens\Phi(M_2))\nonumber\\
 & \xrightarrow{\cA^{-1}_{\Phi(M_1),\Phi(M_1^*),\Phi(M_1)\tens\Phi(M_2)}} \Phi(M_1)\tens(\Phi(M_1^*)\tens(\Phi(M_1)\tens\Phi(M_2)))\nonumber\\
 & \xrightarrow{1_{\Phi(M_1)}\tens\cA_{\Phi(M_1^*),\Phi(M_1),\Phi(M_2)}} \Phi(M_1)\tens((\Phi(M_1^*)\tens\Phi(M_1))\tens\Phi(M_2))\nonumber\\
 & \xrightarrow{1_{\Phi(M_1)}\tens (J_{M_1^*, M_1}\tens 1_{\Phi(M_2)})} \Phi(M_1)\tens(\Phi(M_1^*\otimes M_1)\tens\Phi(M_2)) \xrightarrow{1_{\Phi(M_1)}\tens J_{M_1^*\otimes M_1, M_2}} \Phi(M_1)\tens\Phi((M_1^*\otimes M_1)\otimes M_2)\nonumber\\
& \xrightarrow{1_{\Phi(M_1)}\tens\Phi(e_{M_1}\tens 1_{M_2})} \Phi(M_1)\tens\Phi(\CC\tens M_2)\xrightarrow{1_{\Phi(M_1)}\tens\Phi(l_{M_2})} \Phi(M_1)\tens\Phi(M_2).
\end{align*}
Finally, we apply the naturality of the associativity isomorphisms to the first two arrows above, and we apply the compatibility of $J$ with the associativity isomorphisms to the third, fourth, and fifth arrows, to get:
\begin{align*}
& (\Phi(M_1)\tens\Phi(M_1^*))\tens K\xrightarrow{\cA^{-1}_{\Phi(M_1),\Phi(M_1^*), K}} \Phi(M_1)\tens(\Phi(M_1^*)\tens K)\nonumber\\ & \xrightarrow{1_{\Phi(M_1)}\tens(1_{\Phi(M_1^*)}\tens k)} \Phi(M_1)\tens(\Phi(M_1^*)\tens(\Phi(M_1)\tens\Phi(M_2)))\nonumber\\ & \xrightarrow{1_{\Phi(M_1)}\tens(1_{\Phi(M_1^*)}\tens J_{M_1,M_2})} \Phi(M_1)\tens(\Phi(M_1^*)\tens\Phi(M_1\otimes M_2)) \xrightarrow{1_{\Phi(M_1)}\tens J_{M_1^*, M_1\otimes M_2}} \Phi(M_1)\tens\Phi(M_1^*\otimes(M_1\otimes M_2))\nonumber\\
& \xrightarrow{1_{\Phi(M_1)}\tens\Phi(\cA_{M_1^*,M_1,M_2})} \Phi(M_1)\tens\Phi((M_1^*\otimes M_1)\otimes M_2) \xrightarrow{1_{\Phi(M_1)}\tens\Phi(e_{M_1}\tens 1_{M_2})} \Phi(M_1)\tens\Phi(\CC\tens M_2)\nonumber\\
& \xrightarrow{1_{\Phi(M_1)}\tens\Phi(l_{M_2})} \Phi(M_1)\tens\Phi(M_2).
\end{align*}
But this composition is $0$ because by definition $J_{M_1,M_2}\circ k=0$. This completes the proof of Claim 2 and hence of the theorem.
\end{proof}

Now as an immediate consequence of Proposition \ref{Jsurjectiverema}, Corollary \ref{PhiLaxTens} and Theorem \ref{Jinjective}, and Proposition \ref{PhiAbEquiv} we get:
\begin{corol}\label{maintheorem}
 The triple $(\Phi, J, \varphi_1)$ is a braided tensor functor from $\rep_{A,F,\Omega}\,G$ to $\cC$. Moreover, $(\Phi,J,\varphi_1)$ induces a braided tensor equivalence between $\rep_{A,F,\Omega}\,G$ and $\cC_V$.
\end{corol}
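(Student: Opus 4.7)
The plan is to assemble this corollary from the three pieces that have already been prepared. First I would observe that by Proposition \ref{Jsurjectiverema} the component $J_{M_1,M_2} = J_{P(1); M_1,M_2}$ is surjective for every pair $M_1, M_2$ in $\rep_{A,F,\Omega}\,G$, while Theorem \ref{Jinjective} provides injectivity; hence each $J_{M_1,M_2}$ is a $V^G$-module isomorphism. Combined with Corollary \ref{PhiLaxTens}, which already gives that $(\Phi, J, \varphi_1)$ is a lax braided tensor functor from $\rep_{A,F,\Omega}\,G$ to $\cC$, this upgrades $(\Phi, J, \varphi_1)$ to a genuine braided tensor functor, the lax/strong distinction being precisely whether the structure morphisms are isomorphisms.

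For the equivalence statement, I would invoke Proposition \ref{PhiAbEquiv}, which provides $\Phi$ as an equivalence of abelian categories between $\rep\,G$ and $\cC_V$; since $\rep_{A,F,\Omega}\,G$ has the same underlying abelian category as $\rep\,G$, the same abelian equivalence holds after the cocycle modification. To promote this to a braided tensor equivalence, I need only observe that $\cC_V$ is closed under the tensor product $\tens$ of $\cC$: the isomorphisms $J_{M_1, M_2}\colon \Phi(M_1)\tens\Phi(M_2) \to \Phi(M_1 \otimes M_2)$ exhibit each tensor product of objects in the essential image of $\Phi$ as itself in that essential image, since $\Phi(M_1 \otimes M_2)$ lies in $\cC_V$ by construction. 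Thus $\cC_V$ inherits a braided tensor subcategory structure from $\cC$, and $(\Phi, J, \varphi_1)$ restricts to a braided tensor equivalence $\rep_{A,F,\Omega}\,G \to \cC_V$.

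There is no substantive obstacle remaining at this stage: all the hard work has been concentrated in Theorem \ref{Jinjective} (the categorical rigidity argument using duals in $\rep_{A,F,\Omega}\,G$) and in the verification of the lax vertex tensor functor structure carried out in Theorem \ref{PhiVrtxTens}. The present corollary is simply the final bookkeeping that packages those inputs together with the abelian-categorical equivalence of Proposition \ref{PhiAbEquiv}.
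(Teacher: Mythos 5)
Your proposal is correct and matches the paper's approach exactly: the paper states this corollary as an immediate consequence of Proposition \ref{Jsurjectiverema}, Corollary \ref{PhiLaxTens}, Theorem \ref{Jinjective}, and Proposition \ref{PhiAbEquiv}, which is precisely the assembly you carry out. Your additional observation that the isomorphism $J_{M_1,M_2}$ exhibits $\cC_V$ as closed under $\tens$ is also what the paper emphasizes in the remark immediately following the corollary.
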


\begin{rema}
 The natural transformation $J$, shown to be an isomorphism in Proposition \ref{Jsurjective}, Proposition \ref{Jsurjectiverema}, and Theorem \ref{Jinjective}, is needed to show that $\cC_V$ is actually closed under tensor products and hence is a braided tensor  category. Note that since the proof was independent of $\cC$, tensor products in $\cC_V$ do not depend on the category of $V^G$-modules under consideration.
\end{rema}

\begin{rema}
 The fact that $J$ is an natural isomorphism, together with the compatibility of each $J_{P(z)}$ with parallel transport isomorphisms shown in the proof of Theorem \ref{PhiVrtxTens}, shows that $J_{P(z)}$ is a natural isomorphism for each $z\in\CC^\times$. Specifically, each $J_{P(z); M_1,M_2}$ is an isomorphism because
 \begin{equation*}
  J_{P(z); M_1, M_2}= \Phi(T_{z\to1; M_1,M_2})^{-1}\circ J_{M_1,M_2}\circ T_{z\to 1; \Phi(M_1), \Phi(M_2)}
 \end{equation*}
is a composition of isomorphisms. Thus we can view $(\Phi,\lbrace J_{P(z)}\rbrace_{z\in\CC^\times},\varphi_1)$ as an equivalence of vertex tensor categories, at least as far as the $P(z)$-tensor structures are concerned.
\end{rema}

\subsection{Examples and applications}\label{sec:exams}

Here we illustrate Theorem \ref{fusionruletovrtxtens} and Corollary \ref{maintheorem} with examples to show how they can be used. Theorem \ref{fusionruletovrtxtens} can be used in situations where fusion rules among $V^G$-modules are known \textit{a priori}, and Corollary \ref{maintheorem} can be used in situations where it is known \textit{a priori} that $V^G$ has a suitable vertex tensor category of modules.

\begin{exam}
In this example and the next, we discuss compact automorphism groups of the lattice vertex operator algebra $V_Q$ where $Q=\ZZ\alpha$ is the $\mathfrak{sl}_2$-root lattice, that is, $\langle\alpha,\alpha\rangle =2$. More generally, we consider the abelian intertwining algebra $V_P$, where $P=\ZZ\frac{\alpha}{2}$ is the $\mathfrak{sl}_2$-weight lattice, into which $V_Q$ embeds by \cite[Theorem 12.24]{DL}. 

First we consider the action of the compact abelian group $U(1)$ on $V_Q$ and $V_P$ and show how in these cases Theorem \ref{fusionruletovrtxtens} and Corollary \ref{maintheorem} recover familiar properties of the rank-one Heisenberg vertex operator algebra $\mathcal{H}$ and its modules. The vertex operator algebra $\mathcal{H}$ appears as a subalgebra of $V_Q$ and $V_P$, and these algebras decompose as $\mathcal{H}$-modules as follows:
\begin{equation*}
 V_Q=\bigoplus_{\lambda\in Q} \mathcal{F}_\lambda,\hspace{2em} V_P=\bigoplus_{\lambda\in P} \mathcal{F}_\lambda,
\end{equation*}
where $\mathcal{F}_\lambda$ is the irreducible Heisenberg Fock module of highest weight $\lambda$. Then there is an injective homomorphism $\rho: U(1)\cong\RR\alpha/P\rightarrow\mathrm{Aut}\,V_Q$ such that
\begin{equation*}
 \rho(\beta+P)\vert_{\mathcal{F}_\lambda} = e^{2\pi i\langle\beta,\lambda\rangle}
\end{equation*}
for $\beta\in\RR\alpha$ and $\lambda\in Q$, and we have $V_Q^{U(1)}=\mathcal{F}_0=\mathcal{H}$. It is not hard to show (see for example \cite[Theorem 2.3]{CKLR}) that the category of (direct sums of) Fock modules for $\mathcal{H}$ admits vertex tensor category structure, so that Corollary \ref{maintheorem} implies that the category of Fock modules with highest weights in $Q$ is a symmetric tensor category equivalent to $\mathrm{Rep}\,U(1)$. In particular, since $U(1)$ is abelian, we recover the well-known result that these Heisenberg Fock modules are simple currents, that is, the tensor product of two such irreducible modules is irreducible: $\mathcal{F}_\lambda\boxtimes\mathcal{F}_\mu\cong\mathcal{F}_{\lambda+\mu}$ for $\lambda,\mu\in Q$.

Alternatively, the fusion rules for Heisenberg Fock modules are well known to agree with those for finite-dimensional $U(1)$-modules, so that one could use Theorem \ref{fusionruletovrtxtens} to conclude the existence of vertex tensor category structure on the category of Fock modules with highest weights in $Q$. Then it follows from Corollary \ref{maintheorem} that the corresponding symmetric tensor category structure is equivalent to $\mathrm{Rep}\,U(1)$. 

One can further obtain vertex tensor category structure on the category of Fock modules with highest weights in $P$ by studying the analogous action of $U(1)\cong\RR\alpha/Q$ on $V_P$. In this case, the corresponding braided tensor category of $\mathcal{H}$-modules will be equivalent to the modification of $\mathrm{Rep}\,U(1)$ by a $3$-cocycle; we will describe this $3$-cocyle in detail in the next example. Note that here we are considering small semisimple subcategories of the full category $\mathcal{H}-\mathbf{mod}$ of grading-restricted, generalized $\mathcal{H}$-modules. In fact, the full category $\mathcal{H}-\mathbf{mod}$ admits vertex tensor category structure: the simple objects of $\mathcal{H}-\mathbf{mod}$ consist of all Fock modules with arbitrary highest weights, all simple modules are simple currents, and all objects are finite-length extensions of Fock modules.
\end{exam}

\begin{exam}
Now the action of $U(1)$ on $V_P$ as in the previous example extends to an action of $SU(2)$ on $V_P$, which descends to a faithful action of $SO(3)$ on $V_Q$; we will use these actions to obtain new vertex tensor categories. It is well known (see for instance \cite{DG}, \cite{Mil}), that the fixed-point subalgebra $V_Q^{SO(3)}=V_P^{SU(2)}$ is the simple Virasoro vertex operator algebra $L(1,0)$ with central charge $c=1$. The irreducible $L(1,0)$-modules appearing in the decomposition of $V_P$ as an $SU(2)$-module are the modules $L(1,\frac{n^2}{4})$ for $n\in\NN$, where $n^2/4$ denotes the lowest conformal weight (see \cite{Mil}). The functor $\Phi$ from $\rep SU(2)$ to $L(1,0)$-modules sends the $(n+1)$-dimensional irreducible $SU(2)$-module $V(n)$ to $L(1,\frac{n^2}{4})$.
 
 In \cite[Theorem 3.3]{Mil}, it was shown that the fusion rules among the $L(1,0)$-modules $L(1,\frac{n^2}{4})$ for $n\in\NN$ agree with the fusion rules for finite-dimensional irreducible $\mathfrak{sl}_2$-modules (equivalently, finite-dimensional irreducible continuous $SU(2)$-modules). Thus Theorem \ref{fusionruletovrtxtens} applies to show that the semisimple category $\cC_{V_P}$ of $L(1,0)$-modules generated by the $L(1,\frac{n^2}{4})$ for $n\in\NN$ has vertex tensor category structure. Then Corollary \ref{maintheorem} implies that the braided tensor category structure on $\cC_{V_P}$ is equivalent to the tensor category structure on $\rep SU(2)$ modified by the $3$-cocycle $(F,\Omega)$.\footnote{A more general result for $ADE$ weight lattice abelian intertwining algebras was stated in the dissertation \cite{St}, but the version of Theorem \ref{Jinjective} above that is needed there was not fully proved. This is why we only consider the case of $\mathfrak{sl}_2$ here, using \cite{Mil}.}
 
 Let us discuss the $3$-cocycle $(F,\Omega)$ defining the abelian intertwining algebra structure on $V_P$ in detail. First, the grading group $A$ is $P/Q\cong\ZZ/2\ZZ$. Then, careful examination of the construction in \cite[Chapter 12]{DL} reveals that we may take 
 \begin{equation*}
  F(\alpha_1+Q, \alpha_2+Q, \alpha_3+Q)=\left\lbrace\begin{array}{rcl}
       1 & \mathrm{if} & \alpha_1\in Q,\,\alpha_2\in Q,\,\mathrm{or}\,\,\alpha_3\in Q\\
       -1 & \mathrm{if} & \alpha_1,\alpha_2,\alpha_3\in\frac{\alpha}{2}+Q\\
                                                    \end{array}
\right. ,
 \end{equation*}
where $\alpha$ denotes the positive root of $\mathfrak{sl}_2$, and
\begin{equation*}
  \Omega(\alpha_1+Q, \alpha_2+Q) = \left\lbrace\begin{array}{rcl}
                                                1 & \mathrm{if} & \alpha_1\in Q\,\,\mathrm{or}\,\,\alpha_2\in Q\\
                                                -i & \mathrm{if} & \alpha_1,\alpha_2\in\frac{\alpha}{2}+Q
                                               \end{array}
\right.  .
 \end{equation*}
 Now, $V_Q$ contains the odd-dimensional irreducible representations of $SU(2)$, while $V_{\frac{\alpha}{2}+Q}$ contains the even-dimensional representations. Thus the braiding isomorphisms
\begin{equation*}
 \cR_{V(m), V(n)}: V(m)\otimes V(n)\rightarrow V(n)\otimes V(m)
\end{equation*}
for irreducible modules in $\rep_{P/Q, F, \Omega} SU(2)$ are given by:
\begin{equation*}
 \cR_{V(m),V(n)}(m_1\otimes m_2) =\left\lbrace\begin{array}{rcl}
                                             m_2\otimes m_1 & \mathrm{if} & m\in 2\ZZ\,\,\mathrm{or}\,\,n\in2\ZZ \\
                                             i\,(m_2\otimes m_1) & \mathrm{if} & m,n\in 2\ZZ+1\\
                                            \end{array}
\right. .
\end{equation*}
This formula allows us to calculate the $S$-matrix of $\cC_{V_P}$: 
\begin{equation*}
S_{m,n}=\mathrm{Tr}\,\cR_{L(1,\frac{n^2}{4}), L(1,\frac{m^2}{4})}\circ\cR_{L(1,\frac{m^2}{4}), L(1,\frac{n^2}{4})} = (-1)^{m n}(m+1)(n+1).
\end{equation*}
Since the $S$-matrix is an invariant of a ribbon tensor category structure, the fact that some $S$-matrix entries are negative shows that $\cC_{V_P}$ is \textit{not} quite equivalent to the unmodified symmetric ribbon tensor category $\rep SU(2)$. On the other hand, the smaller category $\mathcal{C}_{V_Q}$ containing the irreducible modules $L(1,\frac{n^2}{4})$ for $n\in2\ZZ$ is symmetric and tensor equivalent to $\mathrm{Rep}\,SO(3)$.
\end{exam}

\begin{exam}
 If a vertex operator algebra is $C_2$-cofinite and CFT-type (all conformal weights are non-negative and its $0$th conformal weight space is spanned by the vacuum), then its full category $V-\mathbf{mod}$ of grading-restricted generalized modules has vertex tensor category structure by \cite[Proposition 4.1 and Theorem 4.13]{H-cofin}. Moreover, if $G$ is a finite solvable automorphism group of a $C_2$-cofinite, CFT-type vertex operator algebra $V$, then the fixed-point subalgebra $V^G$ is also $C_2$-cofinite by \cite{Miy2}. Consequently, Corollary \ref{maintheorem} implies that in this situation, the irreducible $V^G$-modules appearing in the decomposition of $V$ as a $V^G$-module generate a symmetric tensor subcategory of $V^G-\mathbf{mod}$ equivalent to $\rep G$.
 
 As an explicit example in this setting, we may consider the symplectic fermion vertex operator superalgebra $SF(d)$ of $d$ pairs of symplectic fermions studied in \cite{Ab}, whose automorphism group is the symplectic group $Sp(2d,\CC)$. The even vertex operator subalgebra $SF^+(d)$ is $C_2$-cofinite and has automorphism group $Sp(2d,\CC)/\langle P\rangle$, where $P$ is the parity involution of $SF(d)$. Consequently, if $G\leq Sp(2d,\CC)$ is any finite solvable subgroup containing $P$, then $SF(d)^G=SF^+(d)^{G/\langle P\rangle}$ is $C_2$-cofinite, and thus $SF(d)^G-\mathbf{mod}$ has vertex tensor category structure. Then by Corollary \ref{maintheorem}, the irreducible $SF(d)^G$-modules appearing in the decomposition of $SF(d)$ as an $SF(d)^G$-module generate a braided tensor subcategory of $SF(d)^G-\mathbf{mod}$ equivalent to $\rep_{\ZZ/2\ZZ, 1,\Omega} G$ (recall from Example \ref{exam:VOSA} that here $\Omega(i_1+2\ZZ,i_2+2\ZZ)=(-1)^{i_1 i_2}$ for $i_1,i_2\in\ZZ$).
\end{exam}

As a further application of the tensor equivalence of Corollary \ref{maintheorem}, we will show that if $G$ is a finite automorphism group of a simple CFT-type vertex operator algebra $V$ and $V^G$ is strongly rational in the sense that it is simple, CFT-type, self-contragredient, $C_2$-cofinite, and rational, then $V$ is also strongly rational. This result was obtained previously in \cite[Lemma 4.2]{ADJR} under the strong additional assumption that all irreducible $V^G$-modules are non-negatively graded, with a non-zero conformal weight $0$ space occurring only in $V^G$ itself. Here, the role of Corollary \ref{maintheorem} is to show that the categorical dimension of $V$ in the modular tensor category $V^G-\mathbf{mod}$ is non-zero; this is needed for showing that $V-\mathbf{mod}$ inherits semisimplicity from $V^G-\mathbf{mod}$. Note that the converse question of whether strong rationality of $V$ implies that of $V^G$, resolved for solvable $G$ in \cite{CarM}, is certainly more difficult.
\begin{theo}\label{VregifVGis}
 Suppose $V$ is a simple CFT-type vertex operator algebra and $G$ is a finite automorphism group of $V$. If $V^G$ is strongly rational, then $V$ is also strongly rational.
\end{theo}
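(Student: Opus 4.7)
The plan is to recognize $V$ as an \'etale algebra object in the modular tensor category of $V^G$-modules, and then use the HKL correspondence to transfer strong rationality from $V^G$ to $V$. Since $V^G$ is strongly rational, the category $\cC$ of grading-restricted generalized $V^G$-modules is a modular tensor category: braided tensor structure comes from \cite{H-cofin}, and rigidity plus semisimplicity from \cite{H-rigidity}. Because $V$ is a vertex operator algebra rather than a nontrivially graded abelian intertwining algebra, the grading group $A$ is trivial and the cocycle $(F,\Omega)$ is identically $1$, so Corollary \ref{maintheorem} applied with this $\cC$ yields a braided tensor equivalence $\Phi: \rep G \to \cC_V \subseteq \cC$. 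Under $\Phi$, the object $V = \bigoplus_{\chi \in \widehat{G}} M_\chi\otimes V_\chi$ corresponds to the regular representation $\CC[G] = \bigoplus_\chi M_\chi \otimes M_\chi^*$, which is a commutative separable (hence \'etale) algebra in $\rep G$. Transporting across $\Phi$ identifies $V$ with an \'etale algebra in $\cC$, with twist $\theta_V$ acting as the identity because $V$ is CFT-type and hence integrally graded.

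Next I would invoke the HKL correspondence \cite[Theorems 3.2 and 3.4]{HKL}: the category of grading-restricted generalized $V$-modules is equivalent, as a braided tensor category, to the category $\reploc V$ of local modules over the \'etale algebra $V$ inside $\cC$. Every grading-restricted generalized $V$-module restricts to an object of $\cC$ (same conformal weight grading, and $C_2$-cofiniteness over $V^G$ is automatic since $V^G$ is $C_2$-cofinite), so the HKL equivalence exhausts all $V$-modules. Standard results on \'etale algebras in modular tensor categories (see for instance \cite{CKM}) then imply that $\reploc V$ is itself a semisimple modular tensor category with finitely many simple objects. Hence every grading-restricted generalized $V$-module is completely reducible, giving rationality of $V$.

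The remaining conditions follow quickly. For $C_2$-cofiniteness, $V$ is a finite direct sum $\bigoplus_{\chi \in \widehat{G}} M_\chi \otimes V_\chi$ of irreducible objects of $\cC$, so $V$ is $C_2$-cofinite as a $V^G$-module; the inclusion $C_2^{V^G}(V) \subseteq C_2(V)$ then forces $\dim V/C_2(V) < \infty$. For self-contragredience, $V$ is the unit object of the rigid category $\reploc V$ and hence canonically self-dual; since the HKL equivalence identifies categorical duals with contragredient modules, $V \cong V'$. The main obstacle I anticipate is bookkeeping compatibility between the different categorical frameworks -- in particular, confirming that every grading-restricted generalized $V$-module really lies in the image of the HKL functor from $\cC$ (rather than being a $g$-twisted $V$-module for some nontrivial $g\in G$), and that the \'etale-algebra machinery applies to the ambient category $\cC$ itself and not merely to the semisimple subcategory $\cC_V$.
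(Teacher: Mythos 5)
Your overall strategy---pass $V$ through the HKL correspondence and the Kirillov--Ostrik machinery inside the modular tensor category $\cC$ of $V^G$-modules---is the same one the paper uses, and your treatment of $C_2$-cofiniteness agrees with the paper's appeal to \cite[Proposition 5.2]{ABD}. The one genuinely different step is how you certify that $V$ is a ``good'' algebra object: you argue that $\Phi^{-1}(V)\cong\CC[G]$ (i.e.\ $\mathrm{Fun}(G)$) is separable, whereas the paper verifies the hypothesis of Kirillov--Ostrik directly by computing $\dim_\cC V = |G|\neq 0$ (Proposition \ref{Vcatdim}), using the ribbon equivalence $\Phi$ so that $\dim_\cC V_\chi = \dim_\CC M_\chi^*$. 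Your route is morally equivalent, but note that you only know the underlying $G$-module of $\Phi^{-1}(V)$ is the regular representation; to conclude separability you would either have to classify the commutative algebra structures on it (which takes work) or, more efficiently, use the dimension argument the paper actually uses---so in the end the dimension computation is hard to avoid.

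There is a genuine gap in the sentence ``Hence every grading-restricted generalized $V$-module is completely reducible, giving rationality of $V$.'' Rationality is a statement about $\NN$-gradable weak (admissible) modules, not about grading-restricted generalized ones, and passing from semisimplicity of the latter category to rationality is not automatic. The paper devotes Proposition \ref{ssord-to-ssNgrad} to exactly this: for a CFT-type $C_2$-cofinite $V$ one uses finite-dimensionality of the higher Zhu algebras $A_N(V)$ (via \cite{Bu}) to show that every singly generated $\NN$-gradable weak module is already grading-restricted, hence semisimple, and then one sums. Your write-up skips this step entirely. A smaller issue: your self-contragredience argument, which invokes rigidity of $\reploc V$ and asserts that the HKL equivalence matches categorical duals with contragredients, is more delicate than you suggest (the identification of duals with contragredients in Huang's framework is itself part of what one proves once strong rationality is in hand, so there is a circularity risk). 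The paper's argument is more elementary and avoids this: since $V$ is CFT-type, $L(1)V_{(1)} = L(1)(V^G)_{(1)} = 0$ because $V_{(0)}=\CC\vac\subset V^G$ and $V^G$ is self-contragredient and CFT-type, and then \cite[Corollary 3.2]{Li} gives $V\cong V'$ directly.
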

\begin{proof}
 We need to verify that $V$ is rational, $C_2$-cofinite, and self-contragredient. The $C_2$-cofiniteness of $V$ follows from \cite[Proposition 5.2]{ABD}: as a semisimple module for the $C_2$-cofinite vertex operator algebra $V^G$, $V$ is $C_2$-cofinite as $V^G$-module and hence also as $V$-module. That $V$ is self-contragredient follows from \cite[Corollary 3.2]{Li} since $V$ is simple and 
 \begin{equation*}
 L(1)V_{(1)} = \bigoplus_{\chi\in\widehat{G}} M_\chi\otimes L(1)(V_\chi)_{(1)} =L(1)V^G_{(1)}=0,
 \end{equation*}
where the last equality holds because $V^G$ is CFT-type and self-contragredient.
 
 For showing that $V$ is rational, note that $V$ is an object in the category $\mathcal{C}$ of (grading-restricted, generalized) $V^G$-modules, which by \cite{H-rigidity} is a semisimple modular tensor category. By \cite[Theorem 3.2 and Remark 3.3]{HKL}, $V$ is a commutative associative algebra in the modular tensor category $\mathcal{C}$, and by \cite[Theorem 3.4]{HKL}, the category of (grading-restricted, generalized) $V$-modules is the category $\mathrm{Rep}^0\,V$ of ``dyslectic'' modules for the algebra object $V$, using the notation of \cite{KO}. By Lemma 1.20, Theorem 3.2, and Theorem 3.3 of \cite{KO}, $\mathrm{Rep}^0\,V$ is semisimple provided that $V$ is simple and the categorical dimension $\mathrm{dim}_\mathcal{C} V\neq 0$. So we are reduced to proving $\dim_{\cC} V\neq 0$ and that semisimplicity of the grading-restricted, generalized module category of a CFT-type $C_2$-cofinite vertex operator algebra implies rationality. These will be proven in Propositions \ref{Vcatdim} and \ref{ssord-to-ssNgrad} below.
\end{proof}

\begin{propo}\label{Vcatdim}
 Assume that $V$ is a simple vertex operator algebra, $G$ is a finite automorphism group of $V$, and the irreducible modules $V_\chi$ for $\chi\in\widehat{G}$ occurring in the decomposition of $V$ as a $V^G$-module are objects in a vertex tensor category $\mathcal{C}$ of $V^G$-modules. Then $\mathrm{dim}_\mathcal{C} V=\vert G\vert$.
\end{propo}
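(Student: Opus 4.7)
The plan is to compute $\dim_{\mathcal{C}} V$ by decomposing $V$ as a $V^G$-module and applying the tensor equivalence from Corollary \ref{maintheorem}. Since $V$ is an ordinary vertex operator algebra, we are in the case where the grading group $A$ is trivial and $F=1$, $\Omega=1$, so that $\rep_{A,F,\Omega} G = \rep G$ as a symmetric ribbon tensor category. By Corollary \ref{maintheorem}, the functor $\Phi$ gives a braided tensor equivalence between $\rep G$ and $\mathcal{C}_V$, and since braided tensor equivalences preserve categorical dimensions, we have in particular $\dim_{\mathcal{C}_V} \Phi(M) = \dim_\CC M$ for every object $M$ of $\rep G$ (the latter equality being the computation at the end of Section 2.2).

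First I would note that $V = \bigoplus_{\chi \in \widehat{G}} M_\chi \otimes V_\chi$ as a $V^G$-module, which because $G$ is finite is a \emph{finite} direct sum of finitely many copies of each $V_\chi$, so $V$ itself lies in $\mathcal{C}_V \subseteq \mathcal{C}$. Via the isomorphism $\varphi_\chi: V_\chi \to \Phi(M_\chi^*)$ of Proposition \ref{PhiAbEquiv} and the dimension computation in $\rep_{A,F,\Omega}\,G$, I get
\begin{equation*}
 \dim_{\mathcal{C}_V} V_\chi = \dim_{\mathcal{C}_V} \Phi(M_\chi^*) = \dim_\CC M_\chi^* = \dim_\CC M_\chi.
\end{equation*}
Because $\mathcal{C}_V$ is rigid (being equivalent to the rigid category $\rep G$), each $V_\chi$ and $V$ itself have duals inside $\mathcal{C}_V$. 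Duals are unique up to unique isomorphism in any tensor category admitting them, so the categorical dimensions of these objects as computed in $\mathcal{C}_V$ agree with their dimensions in the ambient vertex tensor category $\mathcal{C}$.

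Next I would use additivity of the categorical trace over direct sums: for objects $W_1, W_2$ in any rigid tensor category, $\dim(W_1 \oplus W_2) = \dim W_1 + \dim W_2$, which follows from the fact that $(W_1 \oplus W_2)^* \cong W_1^* \oplus W_2^*$ with evaluation and coevaluation morphisms that are block-diagonal, so that the composition defining the trace splits accordingly. Applying this to the decomposition of $V$ as a $V^G$-module yields
\begin{equation*}
 \dim_{\mathcal{C}} V = \sum_{\chi \in \widehat{G}} (\dim_\CC M_\chi) \cdot \dim_{\mathcal{C}} V_\chi = \sum_{\chi \in \widehat{G}} (\dim_\CC M_\chi)^2.
\end{equation*}

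Finally, I would invoke the classical identity $\sum_{\chi \in \widehat{G}} (\dim_\CC M_\chi)^2 = |G|$ from the representation theory of finite groups to conclude $\dim_\mathcal{C} V = |G|$. There is no serious obstacle here; the only point requiring a moment's thought is checking that the dimension of $V$ in $\mathcal{C}$ agrees with its dimension in the subcategory $\mathcal{C}_V$, which holds because rigid duals in a full tensor subcategory are also rigid duals in the ambient category.
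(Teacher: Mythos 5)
Your approach is essentially the paper's: decompose $V=\bigoplus_\chi M_\chi\otimes V_\chi$ as a $V^G$-module, transfer dimensions through $\Phi$ to get $\dim_\cC V_\chi=\dim_\CC M_\chi$, and invoke $\sum_\chi(\dim_\CC M_\chi)^2=\vert G\vert$ (the paper equivalently phrases this as $\dim_\CC\mathbb{C}[G]$). The observation that $\rep_{A,F,\Omega}G=\rep G$ when $A$ is trivial and the additivity of trace over finite direct sums are both correct and match the paper's reasoning.

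The one step that is not fully justified is the assertion that ``braided tensor equivalences preserve categorical dimensions.'' Categorical dimension is defined via the trace, and in the paper's formulation the trace involves the ribbon twist $\theta_M$. A braided tensor equivalence need not respect the twist, and hence need not preserve dimensions; what is needed is that $\Phi$ be an equivalence of \emph{ribbon} categories. The paper supplies exactly this: it notes that the twist on $\rep G$ is trivial (here because $\Omega(\alpha,\alpha)^{-1}=1$ for $A$ trivial), and the twist $e^{2\pi i L(0)}$ on $\cC_V$ is also trivial because $V$ is a $\ZZ$-graded $V^G$-module, so each direct summand $V_\chi$ has integral conformal weights. With both twists trivial, $\Phi$ is automatically a ribbon equivalence, and only then does dimension-preservation follow. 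You should insert this twist comparison to close the gap; everything else in your argument is sound.
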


\begin{proof}
 The decomposition $V=\bigoplus_{\chi\in\widehat{G}} M_\chi\otimes V_\chi$ implies that
 \begin{equation*}
  \mathrm{dim}_{\mathcal{C}} V =\sum_{\chi\in\widehat{G}} (\dim_{\mathbb{C}} M_\chi)(\dim_{\mathcal{C}} V_\chi).
 \end{equation*}
Since the twists on the ribbon categories $\mathrm{Rep}\,G$ and $\mathcal{C}_V$ are trivial (in the case of $\mathcal{C}_V$ this is because the twist $e^{2\pi i L(0)}$ equals the identity on the $\mathbb{Z}$-graded $V^G$-module $V$), $\Phi: \mathrm{Rep}\,G\rightarrow \mathcal{C}_V$ is an equivalence of ribbon categories. Thus
\begin{equation*}
 \mathrm{dim}_\mathcal{C} V_\chi = \mathrm{dim}_\mathcal{C} \Phi(M_\chi^*)=\dim_{\mathbb{C}}  M_\chi^*.
\end{equation*}
Consequently we calculate
\begin{equation*}
 \dim_\mathcal{C} V=\sum_{\chi\in\widehat{G}} (\dim_\mathbb{C} M_\chi)(\dim_{\mathbb{C}} M_\chi^*)=\sum_{\chi\in\widehat{G}} \dim_{\mathbb{C}} \mathrm{End}_\mathbb{C} M_\chi =\dim_\mathbb{C} \mathbb{C}[G]=\vert G\vert.
\end{equation*}

\end{proof}

\begin{propo}\label{ssord-to-ssNgrad}
 If the category of grading-restricted, generalized modules for a CFT-type, $C_2$-cofinite vertex operator algebra $V$ is semisimple, then $V$ is rational.
\end{propo}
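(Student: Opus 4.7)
The plan is to deduce rationality by showing that every $\NN$-gradable weak $V$-module is completely reducible. The argument splits into a structural step exploiting $C_2$-cofiniteness, and a formal sum-of-simples step that passes from finitely generated modules to arbitrary ones.

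The key input for the first step is the following structural result for CFT-type \ctwo\ vertex operator algebras, due essentially to Miyamoto (with related contributions by Buhl, Karel--Li, Abe--Buhl--Dong, and Huang): every finitely generated $\NN$-gradable weak $V$-module $W$ is in fact a grading-restricted generalized $V$-module in the sense of \cite[Definitions 2.12 and 2.25]{HLZ1}. That is, $W$ decomposes as a direct sum of finite-dimensional generalized $L(0)$-eigenspaces, whose eigenvalues lie in finitely many cosets of $\ZZ$ and are bounded below in real part. The underlying reason is that \ctwo\ forces finite-dimensionality of Zhu's algebra and its higher-level analogues, which combined with Buhl-type spanning sets controls the size of conformal weight spaces in any finitely generated module. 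In the proof I would simply cite this result. Combined with the semisimplicity hypothesis on $\cC$, it follows that every finitely generated $\NN$-gradable weak $V$-module is a finite direct sum of irreducible objects of $\cC$, and in particular a direct sum of irreducible weak $V$-submodules.

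For the second step, let $U$ be an arbitrary $\NN$-gradable weak $V$-module. Then $U$ is the directed union of its finitely generated $V$-submodules, each of which is $\NN$-gradable with respect to the grading inherited from $U$, and hence, by the previous step, is a sum of irreducible weak $V$-submodules. Therefore $U$ itself is a sum of irreducible weak $V$-submodules, and the standard Zorn's lemma argument (which applies since arbitrary direct sums exist in the category of weak $V$-modules) upgrades this to a direct sum decomposition of $U$ into irreducibles. This shows that every $\NN$-gradable weak $V$-module is completely reducible, i.e., that $V$ is rational.

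The principal obstacle is the first step: establishing that finitely generated $\NN$-gradable weak modules over a CFT-type \ctwo\ vertex operator algebra are automatically grading-restricted. Since this is by now a well-established consequence of $C_2$-cofiniteness, the actual proof of the proposition should be short, amounting essentially to a citation plus the elementary Zorn's lemma deduction.
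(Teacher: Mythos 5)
Your proposal follows essentially the same two-step strategy as the paper's proof: use $C_2$-cofiniteness to show that suitably generated $\NN$-gradable weak modules are grading-restricted, then reduce a general $\NN$-gradable weak module to these. The paper makes the first step self-contained rather than citing it as a black box: for a module generated by a single homogeneous vector $w\in W(N)$, it shows $W(N)$ is a cyclic $A_N(V)$-module, hence finite-dimensional by \cite[Corollary 5.5]{Bu}, and then invokes \cite[Corollary 5.6]{Bu} to conclude $W$ is grading-restricted; passing to finitely many homogeneous generators gives your cited structural fact.

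There is one step you should tighten. You assert that every finitely generated $V$-submodule of an $\NN$-gradable weak module $U$ is itself $\NN$-gradable ``with respect to the grading inherited from $U$.'' That is not automatic: a generator need not be homogeneous, and the submodule it generates need not contain the homogeneous components of its elements, so it need not be a graded subspace of $U$. The remedy is the one the paper uses implicitly: restrict attention to submodules $V\cdot w$ generated by homogeneous $w\in U(n)$ (or, more generally, by finite sets of homogeneous vectors). By \cite[Proposition 4.5.6]{LL} such submodules are graded, they are cofinal in the directed system of finitely generated submodules, and $U=\sum_{n\geq 0}\sum_{w\in U(n)} V\cdot w$ expresses $U$ as a sum of grading-restricted (hence semisimple) modules, after which the Zorn-type argument upgrades the sum to a direct sum exactly as you describe.
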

\begin{proof}
 This is essentially the content of Lemma 3.6 and Proposition 3.7 of \cite{CarM}, but here we provide an alternate proof that uses a slightly different collection of results from the vertex operator algebra literature. We need to show that if $W=\bigoplus_{n\geq 0} W(n)$ is an $\mathbb{N}$-gradable weak $V$-module, then $W$ is the direct sum of (possibly infinitely many) irreducible grading-restricted submodules.
 
 First assume that $W$ is generated by a homogeneous vector $w\in W(N)$ for some $N\geq 0$. Then by \cite[Proposition 4.5.6]{LL},
 \begin{equation*}
  W=\mathrm{span}\lbrace v_n w\,\vert\,v\in V, n\in\mathbb{Z}\rbrace,
 \end{equation*}
and hence
\begin{equation*}
 W(N)=\mathrm{span}\lbrace v_{\mathrm{wt}\,v-1} w\,\vert\,v\,\mathrm{homogeneous}\rbrace.
\end{equation*}
This means that $W(N)$ is singly-generated as a module for the $N$th Zhu's algebra $A_N(V)$ as in \cite{DLM2}. Because $V$ is $C_2$-cofinite, $A_N(V)$ is finite dimensional by \cite[Corollary 5.5]{Bu}, and hence $W(N)$ is finite dimensional. Then \cite[Corollary 5.6]{Bu} shows that $W$ is an ordinary grading-restricted $V$-module. Since the category of grading-restricted $V$-modules is semisimple by assumption, it follows that $W$ is a direct sum of irreducible grading-restricted modules in this case.

Now take a general $\mathbb{N}$-gradable weak module $W$. Then $W=\sum_{n\geq 0}\sum_{w\in W(n)} V\cdot w$, where $V\cdot w$ is the $\mathbb{N}$-gradable weak $V$-submodule of $W$ generated by a homogeneous $w$. By the previous case, each $V\cdot w$ is a grading-restricted and thus semisimple $V$-module. This means that $W$ is a sum, and therefore also a direct sum, of irreducible grading-restricted $V$-modules, completing the proof that $V$ is rational.
\end{proof}

\begin{rema}
  Under the strong assumption that $V$ is $g$-rational for all $g\in G$, Proposition \ref{Vcatdim} was essentially proven in \cite[Theorem 6.3]{DJX}. The only difference is that \cite{DJX} is concerned with ``quantum dimensions'' of $V^G$-modules defined in terms of characters. When $V^G$ is strongly rational and every irreducible $V^G$-module other than $V^G$ itself has strictly positive conformal weights, the results of \cite{DJX} show that these quantum dimensions are actually Frobenius-Perron dimensions in the fusion category of $V^G$-modules. So in this setting we see that the categorical and Frobenius-Perron dimensions of modules in $\mathcal{C}_V$ coincide. Note, however, that Proposition \ref{Vcatdim} requires no rationality assumption on $V$.
\end{rema}

\begin{rema}
 The formula $\dim_\mathcal{C}\,V_\chi =\dim_\mathbb{C}\,M_\chi^*$ used in the proof of Proposition \ref{Vcatdim}, which holds for continuous actions of compact groups on $V$ as long as the vertex tensor category $\mathcal{C}$ exists, may be viewed as a partial resolution of Conjecture 6.7 in \cite{DJX}. However, the conjecture in \cite{DJX} is stated in terms of quantum dimensions which it is not clear necessarily exist when $V^G$ is not strongly rational. Of course, the categorical dimensions $\dim_\mathcal{C}\,V_\chi$ also do not exist unless we have braided tensor category structure on a suitable category $\mathcal{C}$ of $V^G$-modules.
\end{rema}


\begin{thebibliography}{CKLR}

\bibitem[Ab]{Ab}
T. Abe, A $\ZZ_2$-orbifold model of the symplectic fermionic vertex operator superalgebra, \textit{Math. Z.} \textbf{255} (2007), 755--792.

\bibitem[ABD]{ABD}
T. Abe, G. Buhl and C. Dong, Rationality, regularity, and $C_2$-cofiniteness, \textit{Trans. Amer. Math. Soc.} \textbf{356} (2004), 3391--3402.

\bibitem[ADJR]{ADJR}
C. Ai, C. Dong, X. Jiao and L. Ren, The irreducible modules and fusion rules for the parafermion vertex operator algebras, \textit{Trans. Amer. Math. Soc.} \textbf{370} (2018), 5963--5981.

\bibitem[Bu]{Bu}
G. Buhl, A spanning set for VOA modules, \textit{J. Algebra} \textbf{254} (2002), 125--151.

\bibitem[Ca]{Ca}
S. Carnahan, A self-dual integral form of the moonshine module, \textit{SIGMA Symmetry Integrability Geom. Methods Appl.}  \textbf{15} (2019), Paper No. 030, 36 pp.

\bibitem[CM]{CarM}  S. Carnahan and M. Miyamoto, Regularity of fixed-point vertex operator algebras, arXiv:1603.05645.

\bibitem[CKL]{CKL} T. Creutzig, S. Kanade and A. Linshaw, Simple current extensions beyond semi-simplicity, \textit{Commun. Contemp. Math.} (2019), arXiv:1511.08754.
	
\bibitem[CKLR]{CKLR}	T. Creutzig, S. Kanade, A. Linshaw and D. Ridout, Schur-Weyl duality for Heisenberg cosets,  \textit{Transform. Groups} \textbf{24} (2019), 301--354.

\bibitem[CKM]{CKM} T. Creutzig, S. Kanade and R. McRae, Tensor categories for vertex operator superalgebra extensions, arXiv:1705.05017.

\bibitem[DJX]{DJX} C. Dong, X. Jiao and F. Xu, Quantum dimensions and quantum Galois theory, \textit{Trans. Amer. Math. Soc.} \textbf{365} (2013), 6441--6469. 

\bibitem[DG]{DG}
C. Dong and R. Griess, Rank one lattice type vertex operator algebras and their automorphism groups, \textit{J. Algebra} \textbf{208} (1998), 262--275.

\bibitem[DL]{DL}
C. Dong and J. Lepowsky, \textit{Generalized Vertex Algebras and Relative Vertex Operators}, Progress in Math., Vol. 112, Birkh\"{a}user, Boston, 1993. 

\bibitem[DLM1]{DLM} C. Dong, H. Li and G. Mason, Compact automorphism groups of vertex operator algebras, \textit{Int. Math. Res. Not. IMRN 1996}, 913--921.

\bibitem[DLM2]{DLM2} C. Dong, H. Li and G. Mason, Vertex operator algebras and associative algebras, \textit{J. Algebra} \textbf{206} (1998), 67--96.

\bibitem[DM]{DM1}
C. Dong and G. Mason, On quantum Galois theory, \textit{Duke Math. J.} \textbf{86} (1997), 305--321.

\bibitem[Ei]{Ei}
S. Eilenberg, Homotopy groups and algebraic homology theories, \textit{Proc. Intl. Congress of Mathematicians}, Vol. I, 1950, 350--353.

\bibitem[EM1]{EM1}
S. Eilenberg and S. MacLane, On the groups $H(\pi, n)$, I, \textit{Annals of Math.} \textbf{58} (1953), 55--106.

\bibitem[EM2]{EM2}
S. Eilenberg and S. MacLane, On the groups $H(\pi, n)$, II, \textit{Annals of Math.} \textbf{70} (1954), 49--137.

\bibitem[FHL]{FHL} I. Frenkel, Y.-Z. Huang and J. Lepowsky,
On axiomatic approaches to vertex operator algebras and modules,
\emph{Mem. Amer. Math. Soc.} {\bf 104} (1993).

\bibitem[GL]{GL}
Y. Gao and H. Li, Generalized vertex algebras generated by parafermion-like vertex operators, \textit{J. Algebra} \textbf{240} (2001), 771--807.

\bibitem[Ho]{Ho}
R. Howe, Remarks on classical invariant theory, \textit{Trans. Amer. Math. Soc.} \textbf{313} (1989), 539--570.

\bibitem[Hu1]{H-book} Y.-Z. Huang,
\emph{Two-dimensional conformal geometry and vertex operator algebras},
Progress in Math., Vol. 148, Birkh\"auser, Boston, MA, 1997.
	
\bibitem[Hu2]{H-tensor4} Y.-Z. Huang, A theory of tensor products for module categories for a vertex operator algebra, IV, 
\emph{J.\ Pure Appl.\ Alg.} {\bf 100} (1995) 173--216.	

\bibitem[Hu3]{H-rigidity} Y.-Z. Huang,  Rigidity and modularity of vertex tensor categories, \textit{Commun. Contemp. Math.} \textbf{10} (2008), 871--911.

\bibitem[Hu4]{H-cofin} Y.-Z. Huang, Cofiniteness conditions, projective covers and the logarithmic tensor product theory, \textit{J. Pure Appl. Algebra}  \textbf{213}  (2009), 458--475.

\bibitem[HKL]{HKL} Y.-Z. Huang, A. Kirillov and J. Lepowsky, Braided tensor categories and extensions of vertex operator algebras, \textit{Comm. Math. Phys.}  \textbf{337}  (2015),  1143--1159.

\bibitem[HL1]{HL-VTC} Y.-Z. Huang and J. Lepowsky,
Tensor products of modules for a vertex operator algebra and vertex tensor categories, \emph{Lie Theory and Geometry},  349-383,
Progress in Math., Vol. 123, Birkh\"{a}user, Boston, MA, 1994.

\bibitem[HL2]{HL-tensor1} Y.-Z. Huang and J. Lepowsky, A theory of tensor products for module categories
for a vertex operator algebra, I, 
\emph{Selecta Math. (N. S.)} \textbf{1} (1995), 699--756.
	
\bibitem[HL3]{HL-tensor2} Y.-Z. Huang and J. Lepowsky, A theory of tensor products for module categories for a vertex operator algebra, II, \emph{Selecta Math. (N. S.)} \textbf{1} (1995), 757--786.
	
\bibitem[HL4]{HL-tensor3} Y.-Z. Huang and J. Lepowsky, A theory of tensor products for module categories for a vertex operator algebra, III, \emph{J. Pure Appl. Alg.} \textbf{100} (1995) 141--171.

\bibitem[HL5]{HL-rev}
Y.-Z. Huang and J. Lepowsky,
Tensor categories and the mathematics of rational and logarithmic conformal field theory,
\emph{J. Phys. A} {\bf 46} (2013), 494009. 

\bibitem[HLZ1]{HLZ1}
Y.-Z. Huang, J. Lepowsky and L. Zhang, Logarithmic tensor category theory for generalized modules for a
conformal vertex algebra, I: Introduction and strongly graded algebras and their generalized modules, {\em Conformal Field Theories and Tensor Categories, Proceedings of a Workshop Held at Beijing International Center
for Mathematics Research}, ed. C. Bai, J. Fuchs, Y.-Z. Huang, L. Kong, I. Runkel and C. Schweigert, Mathematical Lectures from Beijing University, Vol. 2, Springer, New York, 2014, 169--248.
	
\bibitem[HLZ2]{HLZ2}
Y.-Z. Huang, J. Lepowsky and L. Zhang, Logarithmic tensor category theory for 
generalized modules for a conformal vertex algebra, II: Logarithmic formal 
calculus and properties of logarithmic intertwining operators, arXiv:1012.4196.
	
\bibitem[HLZ3]{HLZ3}
Y.-Z. Huang, J. Lepowsky and L. Zhang, Logarithmic tensor category theory for 
generalized modules for a conformal vertex algebra, III: Intertwining maps and 
tensor product bifunctors , arXiv:1012.4197.
	
\bibitem[HLZ4]{HLZ4}
Y.-Z. Huang, J. Lepowsky and L. Zhang, Logarithmic tensor category theory for 
generalized modules for a conformal vertex algebra, IV: Constructions of tensor 
product bifunctors and the compatibility conditions , arXiv:1012.4198.
	
\bibitem[HLZ5]{HLZ5}
Y.-Z. Huang, J. Lepowsky and L. Zhang, Logarithmic tensor category theory for 
generalized modules for a conformal vertex algebra, V: Convergence condition 
for intertwining maps and the corresponding compatibility condition, 
arXiv:1012.4199.
	
\bibitem[HLZ6]{HLZ6}
Y.-Z. Huang, J. Lepowsky and L. Zhang, Logarithmic tensor category theory for 
generalized modules for a conformal vertex algebra, VI: Expansion condition, 
associativity of logarithmic intertwining operators, and the associativity 
isomorphisms, arXiv:1012.4202.
	
\bibitem[HLZ7]{HLZ7}
Y.-Z. Huang, J. Lepowsky and L. Zhang, Logarithmic tensor category theory for 
generalized modules for a conformal vertex algebra, VII: Convergence and 
extension properties and applications to expansion for intertwining maps, 
arXiv:1110.1929.
	
\bibitem[HLZ8]{HLZ8}
Y.-Z. Huang, J. Lepowsky and L. Zhang, Logarithmic tensor category theory for 
generalized modules for a conformal vertex algebra, VIII: Braided tensor 
category structure on categories of generalized modules for a conformal vertex 
algebra, arXiv:1110.1931.

\bibitem[Ka]{Ka}
C. Kassel, \textit{Quantum Groups}, Graduate Texts in Mathematics, Vol. 155, 
Springer-Verlag, New York, 1995.

\bibitem[Ki]{KirillovOrbifoldI}
A. Kirillov, Jr., Modular categories and orbifold models, \textit{Comm. Math. Phys.} \textbf{229} (2002), 183--227.

\bibitem[KO]{KO}
A. Kirillov, Jr. and V. Ostrik, On a $q$-analogue of the McKay correspondence and the $ADE$ classification of $\mathfrak{sl}_2$   conformal field theories, \textit{Adv. Math.}  \textbf{171}  (2002), 183--227. 

\bibitem[LL]{LL}
J. Lepowsky and H. Li, \textit{Introduction to Vertex
Operator Algebras and Their Representations}, Progress in Math.,
Vol. 227, Birkh\"auser, Boston, 2003.

\bibitem[Li]{Li}
H. Li, Symmetric invariant bilinear forms on vertex operator algebras, \textit{J. Pure Appl. Algebra} \textbf{96} (1994), 279--297.

\bibitem[MacL]{MacL}
S. MacLane, Cohomology theory of abelian groups, \textit{Proc. Intl. Congress of Mathematicians}, Vol. II, 1950, 8--14.

\bibitem[McR]{McR}
R. McRae, Twisted modules and $G$-equivariantization in logarithmic conformal field theory, in preparation.

\bibitem[Mil]{Mil}
A. Milas, Fusion rings for degenerate minimal models, \textit{J. Algebra} \textbf{254} (2002), 300--335.

\bibitem[Miy1]{Miy} M. Miyamoto, Flatness and semi-rigidity of vertex operator algebras,  arXiv:1104.4675.

\bibitem[Miy2]{Miy2} M. Miyamoto, $C_2$-cofiniteness of cyclic-orbifold models, \textit{Comm. Math. Phys.} \textbf{335} (2015), 1279--1286.

\bibitem[Ru]{Ru}
I. Runkel, A braided monoidal category for free super-bosons, \textit{J. Math. Phys.} \textbf{55}, 041702 (2014).
 
 \bibitem[Se]{S}
G. Segal, The definition of conformal field theory, \textit{Geometry, topology, and quantum field theory}, 421--577, London Math. Soc. Lecture Notes Ser., \textbf{308}, Cambridge Univ. Press, Cambridge, 2004; preprint, 1988.

\bibitem[St]{St}
K. Styrkas, \textit{Quantum Groups, Conformal Field Theories, and Duality of Tensor Categories}, Ph.D. thesis, Yale University, 1998.

\bibitem[Va]{V}
C. Vafa, Conformal theories and punctured surfaces, \textit{Phys. Lett. B} \textbf{199} (1987), 195--202. 
\end{thebibliography}
\end{document}